\newtheorem{theorem}{Theorem}
\newtheorem{proposition}{Proposition}[section]
\newtheorem{lemma}[proposition]{Lemma}
\newtheorem{corollary}[proposition]{Corollary}
\newtheorem{remark}{Remark}
\numberwithin{equation}{section}
\DeclareMathOperator{\id}{id}
\DeclareMathOperator{\exterior}{ext}
\DeclareMathOperator{\interior}{int}
\DeclareMathOperator{\rad}{rad}
\DeclareMathOperator{\loc}{loc}
\def\Re{\operatorname{Re}}
\def\Im{\operatorname{Im}}
\title[]{On self-Similar blow up for energy supercritical semilinear wave equation}
\author{Jihoi Kim}
\address[Jihoi Kim]{University of Cambridge, United Kingdom.}
\email{rk614@cam.ac.uk}
\begin{document}
\begin{abstract}
We analyse the energy supercritical semilinear wave equation 
$$\Phi_{tt}-\Delta\Phi-|\Phi|^{p-1} \Phi=0$$
in $\mathbb R^d$ space. We first prove in a suitable regime of parameters the existence of a countable family of self-similar profiles which bifurcate from the soliton solution. We then prove the non-radial finite codimensional stability of these profiles by adapting the functional setting of \cite{MRRS}.
\end{abstract}

\maketitle

\textbf{Keywords:} Semi-linear wave equation, Self-similar solution, Blow up, Focusing, Energy super-critical, Finite codimensional stability

\section{Introduction}

\subsection{Setting of the problem} We consider the semi-linear focusing wave equation
\begin{equation}
\label{eq: Nonlinear Wave}
\begin{cases}\Phi_{tt}-\Delta\Phi-|\Phi|^{p-1}\Phi=0,\\
\Phi\big|_{t=0}=\Phi_0,\ \ \partial_t\Phi\big|_{t=0}=\Phi_1,\end{cases}
\quad (t,x) \in \mathbb R\times \mathbb R^d.
\end{equation}
This model admits a scaling invariance: if $\Phi(t,x)$ is a solution, then so is
$$
\Phi_\lambda(t,x)=\lambda^\alpha \Phi(\lambda t,\lambda x),\quad \lambda>0,\quad \alpha:=\frac{2}{p-1}.
$$
This transformation is an isometry on the homogeneous Sobolev space with critical exponent:
$$
\Vert \Phi_\lambda(t,\cdot)\Vert_{\dot H^{s_c}}=\Vert \Phi(t,\cdot)\Vert_{\dot H^{s_c}},\quad s_c:=\frac{d}{2}-\frac{2}{p-1}.
$$
In this paper, we focus on the energy super-critical case where space dimension $d\ge 3$ and $s_c>1$. The question we address is the existence and stability of self-similar blow up regimes.\\

The problem of singularity formation in semi-linear dispersive equations has attracted a considerable attention in the last fifty years both in the physics and mathematics communities, with a substantial acceleration in the last twenty years. The series of works by Merle and Zaag \cite{MZ1,MZ2, MZ3} give a detailed description of singularity formation mechanims in energy sub-critical ranges $s_c<1$ where the leading order expected behaviour is the self-similar ODE blow up. In the energy critical range, the situation is very different and new so called type II blow up scenario were discovered  in the setting of the energy-critical wave and Schr\"odinger map \cite{KST, RodSter, RaphRod, MRRsch} and semi-linear problems \cite{HR}. The soliton solution $$\left|\begin{array}{l}
\Delta Q+Q^p=0\\
\lim_{|x|\to +\infty} Q(x)=0
\end{array}\right.
$$ plays a distinguished role in the analysis as it serves as blow up profile for the main part of the singular bubble. The stability analysis of the obtained type II blow up bubbles then relies on delicate energy estimates built on repulsivity properties of the linearized self-similar flow near the soliton.\\

In the energy super-critical range, and in analogy with the pioneering results for the non-linear heat equation \cite{HV, MaMe1, MaMe2,CMRcylind}, the situation is quite different. Solitonic type II bubbles still exist but only for $p>p_{JL}$ large enough, \cite{MRRnls, C} where Joseph-Lundgren exponent $p_{JL}$ is defined in \eqref{eq: P_JL}, and a new type of self-similar blow up arises, different from the ODE blow up, as governed by explicit stationary self-similar solutions. More explicitely, the ansatz 
\begin{equation}
\label{eq: Ansatz}
\Phi(t,r)=(T-t)^{-\alpha}u(\rho),\quad \rho:=|y|,\quad y:=\frac{x}{T-t}
\end{equation}
maps \eqref{eq: Nonlinear Wave} onto the radially symmetric non-linear ODE
\begin{equation}
\label{eq: Self-similar Profile}
(1-\rho^2)u''+\bigg[\frac{d-1}{\rho}-2(1+\alpha)\rho\bigg]u'-\alpha(1+\alpha)u+|u|^{p-1}u=0.
\end{equation}
The program of existence of self-similar dynamics then becomes a two step analysis. First construct solutions to the non-linear ODE \eqref{eq: Self-similar Profile} with regularity at the origin and  good boundary condition at $+\infty$ $$u(\rho)\sim \frac{c}{\rho^{\frac 2{p-1}}}\ \ \mbox{as}\ \ \rho\to+\infty.$$ These solutions however never belong to the energy space in which \eqref{eq: Nonlinear Wave} is naturally well posed, hence a global in space stability analysis is required to ensure that a suitable truncation of these profiles can be stabilized, at least for a finite dimensional manifold of initial data. This second step relies on both a linear and non-linear analysis of the linearized flow around self-similar profiles.\\

Let us stress that the program of constructing self-similar solutions and showing their finite codimensional stability goes way beyond the scope of non-linear wave equations, and is in particular a very active field of research in fluid related problems, \cite{MRRS}, hence the need for robust analytic methods.


\subsection{Existence of self-similar profiles}


The esistence of self-similar profiles with suitable boundary conditions is in general a delicate problem, and here we take advantage of symmetry reductions to transform the problem into the non-linear ODE problem \eqref{eq: Self-similar Profile} which is of shooting type. However the understanding of solutions is non trivial, and relies on the derivation of explicit monotonicity formulas to follow the non-linear flow. The existence of a countable family of solutions to \eqref{eq: Self-similar Profile} is obtained in \cite{BMW,DD} in the expected range
\begin{equation}
\label{vneoneneoivne}
1<s_c<\frac{3}{2}\Leftrightarrow 1+\frac{4}{d-2}<p<1+\frac{4}{d-3}.
\end{equation} 
Our first result in this paper describes the asymptotic behaviour of the branch of solutions to \eqref{eq: Self-similar Profile} leading to an explicit sequence of solutions that concentrate at the origin to a soliton  profile. Our approach generalizes the analogous result for the semi-linear heat equation implemented in \cite{BB,CRS}. The advantage of this method is its robustness as it can be applied to more complicated problems, see e.g. \cite{BMR}, and also allows for a full description of the profile in space.

 \begin{theorem}[Existence and asymptotes of excited self-similar solutions]
\label{theo: Result 1}
Assume \eqref{vneoneneoivne}. There exists $N\in\mathbb N$ such that for all $n\ge N$, there exists a smooth radially symmetric self-similar solution to equation \eqref{eq: Nonlinear Wave} such that for
$$
\Lambda = \alpha+y\cdot \nabla,
$$
$\Lambda u_n$ vanishes exactly $n$ times on $(0,\infty)$. Moreover:\\\\
\emph{(i) Behaviour at infinity:} as $n\rightarrow \infty$ the solutions $u_n$ converge to the explicit singular solution 
$$
u_\infty(\rho):=b_\infty\rho^{-\alpha}, \quad b_\infty:=(\alpha(d-2-\alpha))^{\frac{2}{\alpha}}
$$
to \eqref{eq: Self-similar Profile} in the following sense: for all $\rho_0>0$,
$$
\lim_{n\rightarrow \infty }\sup_{\rho\ge\rho_0}(1+\rho^\alpha)|u_n(\rho)-u_\infty(\rho)|=0
$$
\emph{(ii) Behaviour at the origin:} There exists $0<\rho_0\ll1 $ and $\mu_n\rightarrow 0$ such that
$$
\lim_{n\rightarrow \infty }\sup_{\rho\le\rho_0}\bigg|u_n(\rho)-\mu_n^{-\alpha}Q\bigg(\frac{\rho}{\mu_n}\bigg)\bigg|=0
$$
where the soliton $Q$ is the unique non trivial radially symmetric solution to 
$$
\Delta Q+Q^p=0,\quad Q(\rho)=b_\infty\rho^{-\alpha}+\mathcal O_{\rho\rightarrow\infty } (\rho^{1-\frac{d}{2}}).
$$
\end{theorem}


\subsection{non-linear stability}


The non-linear stability of self-similar blow up is a classical problem. It has been addressed for the energy super-critical non-linear heat equation in \cite{CRS} and the stability proof relies on a two steps argument: linear exponential decay in time for local in space norms around the singularity which in the parabolic case rely on self-adjoint spectral methods, and then propagation of space time decay using energy estimates which provide strong enough control to close the non-linear terms.\\

In the setting of energy super-critical non-linear wave equations, a non-self adjoint spectral method is developped in the pioneering works by Donninger and Sch\"orkhuber for wave maps \cite{DS}, see also \cite{GS} and references therein, but decay is restricted to the light cone only $|x|<T-t$ and hence does not allow the full control of the solution. In \cite{MRRS}, a full linear and non-linear analysis is performed for the stability study of quasilinear self-similar blow up. Our claim in this paper is that this robust framework can be adapted to \eqref{eq: Nonlinear Wave} to show the stability of {\rm any} self-similar profile, modulo a finite number of unstable modes. We moreover claim that full non-radial perturbations can be considered as opposed to previous works which restrict to data with radial symmetries.

\begin{theorem}[Non-linear stability]
\label{theo: Result 2}
Let $d=3$ and $u_n$ be the self-similar profiles constructed in \emph{Theorem \ref{theo: Result 1}} with corresponding initial data $(\Phi(0),\Phi_t(0))=P_n$ for
\begin{equation}
\label{eq: Profile}
P_n:=\left(\frac{1}{T^\alpha} u_n\left(\frac{r}{T}\right),\frac{1}{T^{\alpha+1}}\Lambda u_n\left(\frac{r}{T}\right)\right).
\end{equation}
For $T\ll 1$, there exists a finite codimensional Lipschitz manifold of smooth initial data \footnote{see comments on the results below for the precise definition of the Lipschitz manifold} $(\Phi(0),\Phi_t(0))\in\cap_{m\ge0} H^m(\mathbb R^3,\mathbb R^2)$ such that in the neighbourhood of $P_n$, the corresponding solution $(\Phi,\Phi_t)$ to \eqref{eq: Nonlinear Wave} develops a Type I blow up at time $T$ at the origin i.e. as $t\rightarrow T$,
$$
\Vert \Phi(t)\Vert_{L^\infty}\sim(T-t)^{-\alpha}.
$$
More precisely, there holds the decomposition:
$$
(\Phi,\Phi_t) = \left(\frac{1}{(T-t)^\alpha} (u_n+\Psi)\left(t,\frac{r}{T-t}\right),\frac{1}{(T-t)^{\alpha+1}} (\Lambda u_n+\Omega)\left(t,\frac{r}{T-t}\right)\right).
$$
with the asymptotic behaviour in the limit $t\rightarrow T$:\\

\noindent\emph{1. Subcritical norms}
\begin{equation}
\label{eq: Subcritical Sobolev}
\limsup_{t\rightarrow T} \Vert \Phi\Vert_{\dot H^s}^2+\mathbbm 1_{s\ge1}\Vert \Phi_t\Vert_{\dot H^{s-1}}^2<\infty\quad \text{for}\quad 0\le s<s_c
\end{equation}
\emph{2. Critical norm}
\begin{equation}
\label{eq: Critical Sobolev}
(\Vert \Phi\Vert_{\dot H^{s_c}}^2, \Vert \Phi_t\Vert_{\dot H^{s_c-1}}^2)=(c_n,d_n)(1+o_{t\rightarrow T}(1))|\log(T-t)|,
\end{equation}
\emph{3. Supercritical norms} 
\begin{equation}
\label{eq: Supercritical Sobolev}
\lim_{t\rightarrow T}\Vert \Psi\Vert_{\dot H^s}^2+\Vert \Omega\Vert_{\dot H^{s-1}}^2=0 \quad\text{for}\quad s_c<s\le 2.
\end{equation}
\end{theorem}

{\bf Comments on the results}\\\\
\emph{1. Stability of the self-similar blow up.} As in \cite{MRRS}, a key step in the analysis is to realize the linearized operator close to a self-similar profile
as a compact perturbation of a maximal dissipative operator in a {\em global in space weighted Sobolev space with supercritical regularity}. Using sufficient regularity and propagating additional weighted energy estimates then allows to close bound for the nonlinear terms. Hence the counting of the exact number of instability is reduced to an explicit spectral problem.
\\\\
\emph{2. Restriction on the parameters.} Note that in Theorem \ref{theo: Result 2}, there is a further restriction on the parameters: 
$$
d=3 \Longleftrightarrow p>5
$$
This is due to the poor regularity of the nonlinearity. In particular, the nonlinearity $\Phi\mapsto |\Phi|^{p-1}\Phi$ has $C^{\lfloor p\rfloor}$ regularity for $p\notin 2\mathbb N+1$. The role of this constraints is to allow us to take $k\le \lfloor p\rfloor-1$ derivatives when closing the nonlinear estimates. We are only able to take one less derivative than the regularity of $|\Phi|^{p-1}\Phi$ since the Lipshcitz dependence of the nonlinear term on $\Phi$ in the weighted $H^k$ space means we lose one more power in the nonlinear term (see Lemma \ref{lem: Lipschitz}). Furthermore, we require $k\ge\frac{d}{2}$ by Sobolev embedding which is what we use to bound the nonlinear term. Since \eqref{vneoneneoivne} implies that $p-1\ll1$ for large values of $d$, the codimensional stability result cannot be generalised into higher dimensions. Also, note that the constraint $p+1>s_c$ which is implied by \eqref{vneoneneoivne} is essential in the development of the local theory (see \cite{GX} for the related well-posedness result).
\\\\
\emph{3. Manifold structure of the initial data.} Let 
$$
B_\varepsilon^{\mathbb H}=\{X\ | \ \Vert X\Vert_{\mathbb H}<\varepsilon\},\quad B_\delta^H=\{X\ | \ \Vert X\Vert_H<\delta\}
$$
with $\varepsilon,\ \delta\ll1$ where 
$$
\mathbb H=H_4\times H_3
$$
where the spaces $H_k$ are defined in Section \ref{sec: Notation} and $H$ is the weighted $W^{k,\infty}$-space defined in the Proof \ref{proof: Result 2} and consider the self-similar profile and the dampened profile in self-similar variables:
\begin{equation}
\label{eq: Profiles and Dampened}
P_n=(u_n(\rho),\Lambda u_n(\rho)),\quad P_n^D=(\eta(e^{-s_0}\rho)u_n(\rho), \eta(e^{-s_0}\rho)\Lambda u_n(\rho)).
\end{equation}
where $\eta$ is a smooth, rapidly decaying function defined in \eqref{eq: eta}. Profiles are dampened to acheive finite energy. We then, construct the finite codimensional manifold of initial data in Theorem \ref{theo: Result 2} as follows: consider
$$
\mathbb H=U\oplus V
$$
a direct sum decomposition into subspaces $U$ and $V$ stable and unstable under the semigroup action of the linearized operator with $\operatorname{dim}V<\infty$. Then consider the Lipschitz map $\Phi:B_\varepsilon^{\mathbb H}\cap (B_\delta^H+P_n^D-P_n)\cap U\rightarrow V$ obtained by solving a Brouwer type fixed point problem and a linear map $\Xi: V\rightarrow U$ on the finite dimensional space $V$ such that
$$
\operatorname{Id}+\,\Xi: V\rightarrow (B_\delta^H+P_n^D-P_n).
$$
Then, the finite codimensional manifold can be realized as
$$
\mathcal M=P_n+\Big(\operatorname{Id}+(\operatorname{Id}+\,\Xi) \circ \Phi\Big)\left(B_\varepsilon^{\mathbb H}\cap(B_\delta^H+P_n^D-P_n)\cap U\right)\subset H+P_n^D.
$$
Note that the modifier $\Xi$ is there to ensure that our initial data does not leave the neighbourhood $H+P_n^D$ which is essential in obtaining finite energy initial data. Also, in Lemma \ref{lem: Lipschitz}, it is proved that $\Phi$ is a Lipschitz map with respect to the topology of $\mathbb H$. Similar properties of the stable manifold is proved in \cite{GS}, \cite{CRS}, \cite{C}.

\subsection*{Aknowledgements} The author is endebted to his PhD supervisor P. Rapha\"el for stimulating discussions and guidance on this work. This work is supported by the UKRI, ERC advanced grant SWAT and Cambridge Commonwealth Trust.

\section{Notations}
\label{sec: Notation}
Let us introduce some notations before we start. We write for the generator of scaling operator $\Lambda$:
$$
\Lambda = \alpha+y\cdot \nabla,\quad \alpha:=\frac{2}{p-1}.
$$
We will denote by $(t,x)$ the original variables and $(s,y)$ for the self-similar variables:
$$
\quad s=-\log(T-t),\quad y= \frac{x}{T-t}
$$ 
and denote their modulus:
$$
r=|x|, \quad \rho=|y|.
$$
We also write
$$
\nabla^{j}=\begin{cases}
\Delta^i & j=2i,\\
\nabla \Delta^i & j=2i+1,
\end{cases}
$$
and for scalar (or vector) valued functions $f$, $g$ on $\mathbb R^d$,
$$
(f,g)= \int_{\mathbb R^d} f\cdot g\  dy.
$$
Now fix $d=3$. Let $\chi\in C_c^\infty(\mathbb R^3,[0,\infty))$ be a radial smooth cut-off function with 
$$
\chi(y)=\begin{cases} 1 & |y|\le 1,\\ 0 & |y|\ge 2.\end{cases}
$$
For $k\in \mathbb N$, denote by $H_k$ the completion of $C_c^\infty(\mathbb R^3)$ with respect to the norm induced by the inner product 
$$
(\Psi,\tilde \Psi)_{H_k}=(\nabla^k\Psi,\nabla^k\tilde \Psi)+ \int_{\mathbb R^3}\chi \Psi \tilde \Psi  dy.
$$

\section{Construction of exterior solutions}

Our aim in this section is to construct a family of outer solutions to the self-similar equation \eqref{eq: Self-similar Profile}. The key is that the outer spectral problem, including the singularity through the renormalized light cone $\rho=1$, is explicit.\\

We introduce relevant notations for this section. \\\\
\underline{\emph{Linearized operator}}. Recall the generator of scaling operator $\Lambda$:
$$
\Lambda = \alpha+y\cdot \nabla.
$$
Introduce the linearized operator
\begin{equation}
\label{eq: L_infty}
\mathcal L_\infty=(1-\rho^2)\frac{d^2}{d\rho^2}+\bigg[\frac{d-1}{\rho}-2(1+\alpha)\rho\bigg]\frac{d}{d\rho}-\alpha(1+\alpha)+p\alpha(d-2-\alpha)\rho^{-2}.
\end{equation}
for \eqref{eq: Self-similar Profile} near the singular solution $u=u_\infty$ where we recall
$$
u_\infty(\rho)=b_\infty\rho^{-\alpha}, \quad b_\infty=(\alpha(d-2-\alpha))^{\frac{2}{\alpha}}.
$$
Also, let 
$$
\quad\omega=\sqrt{pb_\infty^{p-1}-\frac{(d-2)^2}{4}}.
$$
Note that $\omega\in \mathbb R$ if
\begin{equation}
\label{eq: P_JL}
1+\frac{4}{d-2}<p<p_{JL}:=\begin{cases} \infty &\text{ for } d\le 10,\\ 1+\frac{4}{d-4-2 \sqrt{d-1}} & \text{ for } d\ge 11\end{cases}
\end{equation}
with sufficient condition being $1<s_c<\frac{3}{2}$. $p_{JL}$ is known as the Joseph-Lundgren exponent.\\\\
\underline{\emph{Hypergeometric functions}}. We denote by $_2F_1$ the Gauss hypergeometric functions:
\begin{equation}
\label{eq: Hypergeometric Functions}
_2F_1(a,b,c; z)=\sum_{n=0}^\infty \frac{(a)_n(b)_n}{(c)_n}\frac{z^n}{n!}
\end{equation}
where $(a)_n=a(a+1)\cdots (a+n-1)$.

\subsection{Fundamental solutions and exterior resolvent}

Recall the definition of linearized operator $\mathcal L_\infty$ above. In this section, we compute the fundamental solutions of the linearized operator $\mathcal L_\infty$ and use calculus of variation to invert $\mathcal L_\infty$ in a suitable space of functions.

\begin{lemma}[Fundamental solutions of $\mathcal L_\infty$]
\label{lem: Fundamental Solutions}
(i) Interior solution: In the region $\rho\in(0,1)$,  the homogeneous equation $\mathcal L_\infty (\psi)=0$ has a basis of solutions
\begin{equation}
\label{eq: Left Fundamental Solutions}
\begin{aligned}
\psi_1^L&=\Re\bigg[\rho^{1-\frac{d}{2}+i\omega}\, _2F_1\bigg(\frac{1-s_c+i\omega}{2},\frac{2-s_c+i\omega}{2},1+i\omega,\rho^2\bigg)\bigg]\\
\psi_2^L&=\Im\bigg[\rho^{1-\frac{d}{2}+i\omega}\, _2F_1\bigg(\frac{1-s_c+i\omega}{2},\frac{2-s_c+i\omega}{2},1+i\omega,\rho^2\bigg)\bigg].
\end{aligned}
\end{equation}
(ii) Exterior solution: In the region $\rho\in(1,\infty)$,  the homogeneous equation $\mathcal L_\infty (\psi)=0$ has a basis of solutions
\begin{equation}
\label{eq: Right Fundamental Solutions}
\begin{aligned}
\psi_1^R&=\rho^{-\alpha-1}\,_2F_1\bigg(\frac{2-s_c-i\omega}{2},\frac{2-s_c+i\omega}{2},\frac{3}{2},\rho^{-2}\bigg)\\
\psi_2^R&=\rho^{-\alpha}\, _2F_1\bigg(\frac{1-s_c-i\omega}{2},\frac{1-s_c+i\omega}{2},\frac{1}{2}, \rho^{-2}\bigg).
\end{aligned}
\end{equation}
\end{lemma}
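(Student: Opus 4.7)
The plan is to recognise $\mathcal{L}_\infty\psi=0$ as a Gauss hypergeometric equation in disguise, separately near its two singular points $\rho=0$ and $\rho=\infty$. In each case the strategy is the same: compute the Frobenius indices at the singular point, factor out the corresponding indicial power, and pass to a new independent variable ($z=\rho^2$ for the left basis, $z=\rho^{-2}$ for the right basis) that brings the ODE into the canonical form
$$z(1-z)F''+[c-(a+b+1)z]F'-ab\,F=0.$$
The solutions are then read off by identifying $(a,b,c)$ and invoking the standard regular Frobenius solution $\,_2F_1(a,b,c;z)$. Note that $\mathcal{L}_\infty$ is a second-order Fuchsian operator with regular singularities at $\{0,1,\infty\}$, so such a reduction is bound to succeed.

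\emph{Left basis on $(0,1)$.} Plugging $\psi=\rho^s$ into $\mathcal{L}_\infty\psi=0$ and isolating the $\rho^{s-2}$ terms yields, at $\rho=0$, the indicial equation $s(s-1)+(d-1)s+pb_\infty^{p-1}=0$, whose roots are $s_\pm=1-\tfrac{d}{2}\pm i\omega$ by the very definition of $\omega$ (since $pb_\infty^{p-1}=\omega^2+(d-2)^2/4$). Setting $z=\rho^2$ and $\psi=\rho^{s_+}F(z)$, a direct computation using $\alpha=\tfrac{2}{p-1}$ and $s_c=\tfrac{d}{2}-\alpha$ shows that $F$ satisfies the hypergeometric equation with parameters $(a,b,c)=\bigl(\tfrac{1-s_c+i\omega}{2},\tfrac{2-s_c+i\omega}{2},1+i\omega\bigr)$. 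The Frobenius solution regular at $z=0$ is $\,_2F_1(a,b,c;z)$, and taking the real and imaginary parts of $\rho^{s_+}\,_2F_1(a,b,c;\rho^2)$ produces $\psi_1^L$ and $\psi_2^L$. Linear independence holds because $\omega\ne 0$: the two leading behaviours near the origin are $\rho^{1-d/2}\cos(\omega\log\rho)$ and $\rho^{1-d/2}\sin(\omega\log\rho)$, which are independent over $\mathbb R$.

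\emph{Right basis on $(1,\infty)$.} An analogous indicial analysis at $\rho=\infty$, keeping only $-\rho^2\psi''$, $-2(1+\alpha)\rho\psi'$ and $-\alpha(1+\alpha)\psi$, gives $(s+\alpha)(s+\alpha+1)=0$, so the two exponents are $-\alpha$ and $-\alpha-1$. Set $z=\rho^{-2}$ and make the two ansätze $\psi=\rho^{-\alpha-1}F(z)$ and $\psi=\rho^{-\alpha}G(z)$ in turn. The first substitution produces a hypergeometric equation with parameters $\bigl(\tfrac{2-s_c-i\omega}{2},\tfrac{2-s_c+i\omega}{2},\tfrac{3}{2}\bigr)$, whose regular Frobenius solution is $\psi_1^R$; the second yields parameters $\bigl(\tfrac{1-s_c-i\omega}{2},\tfrac{1-s_c+i\omega}{2},\tfrac{1}{2}\bigr)$ and gives $\psi_2^R$. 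In both cases $(a,b)$ are complex conjugates and $c\in\mathbb R$, so the series is real-valued; it converges for $|z|<1$, i.e., on $\rho>1$. The two solutions are linearly independent because their leading behaviours $\rho^{-\alpha-1}$ and $\rho^{-\alpha}$ at infinity are distinct.

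\emph{Main obstacle.} The only real work is the algebraic matching of $(a,b,c)$ after each substitution, which requires carefully tracking how $d/d\rho$ transforms under $z=\rho^{\pm 2}$ and how the indicial prefactor shifts the first-order coefficient. The shifted equation must then be rewritten, using $\alpha=\tfrac{2}{p-1}$, $s_c=\tfrac{d}{2}-\alpha$ and $pb_\infty^{p-1}=\omega^2+(d-2)^2/4$, in the normalised hypergeometric form; this is bookkeeping rather than conceptual difficulty. No additional input is needed beyond the standard theory of $\,_2F_1$.
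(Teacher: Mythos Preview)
Your proposal is correct and takes essentially the same approach as the paper: a Frobenius analysis at the regular singular points $\rho=0$ and $\rho=\infty$, identifying the resulting series with $\,_2F_1$. The paper carries this out by directly substituting a power-series ansatz $\psi=\rho^{\gamma}\sum a_n\rho^{\pm n}$, reading off the indicial roots $\gamma=1-\tfrac{d}{2}\pm i\omega$ (resp.\ $\gamma=\alpha,\alpha+1$) and the two-term recurrence, and then recognising the Pochhammer structure; you instead factor out the indicial power and change variables $z=\rho^{\pm2}$ to land on the canonical hypergeometric form and read off $(a,b,c)$. These are two presentations of the same computation, and your high-level remark that $\mathcal L_\infty$ is Fuchsian with singularities at $\{0,1,\infty\}$ is a nice way to explain in advance why the reduction must succeed.
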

\begin{proof}
For $\rho\in (0,1)$, consider solutions of the form $\psi=\rho^\gamma\sum_{n=0}^\infty a_n \rho^n$ for $(a_n)_{n=0}^\infty$ bounded sequence in $\mathbb R$ with $a_0\neq 0$ so the sum is absolutely convergent in $(0,1)$. Then 
$$
\begin{aligned}
&\mathcal L_\infty(\psi)=[\gamma(\gamma+d-2)+pb_\infty^{p-1}]a_0\rho^{\gamma-2}+[(\gamma+1)(\gamma+d-1)+pb_\infty^{p-1}]a_1\rho^{\gamma-1}\\
+&\sum_{n=0}^\infty\bigg\{\big[(\gamma+n+2)(\gamma+n+d)+pb_\infty^{p-1}\big]a_{n+2}-\big[(\gamma+n)(\gamma+n+1+2\alpha)+\alpha(1+\alpha)\big]a_n\bigg\}\rho^{\gamma+n}\\
\end{aligned}
$$
Equating first two terms to $0$, we infer $\gamma=1-\frac{d}{2}\pm i\omega$ and $a_1=0$. Equating higher order terms to $0$,
$$
a_{n+2}=\frac{(\gamma+n+\alpha)(\gamma+n+1+\alpha)}{(\gamma+n+\tfrac{d}{2}+1+i\omega)(\gamma+n+\tfrac{d}{2}+1-i\omega)} a_n.
$$
The cases $\gamma=1-\frac{d}{2}+i\omega$ and $1-\frac{d}{2}-i\omega$ give rise to complex conjugate solutions. Thus, real and imaginary parts of the complex solution satisfying the recursion relation relation above:
$$
\rho^{1-\frac{d}{2}+i\omega}\, _2F_1\bigg(\frac{1-s_c+i\omega}{2},\frac{2-s_c+i\omega}{2},1+i\omega,\rho^2\bigg)
$$
yields two linearly independent real solutions. In the region $(1,\infty)$, consider solutions of the form $\psi=\rho^{-\gamma}\sum_{n=0}^\infty a_n\rho^{-n}$ and proceed as in the region $(0,1)$.
\end{proof}

We now investigate the regularity of the fundamental solutions at the singular point $\rho=1$. First, we recall some results on the singular ODEs.\\

\begin{proposition}[Solutions to singular ODEs, \cite{V}]
\label{prop: Singular ODE}
Let $f\in C^m([0,T],\mathbb R^n)$, $A\in C^m([0,T],\mathbb R^{n\times n})$ for an $m\ge 1$, $m>\max_{\lambda_k\in \sigma(A(0))}\operatorname{Re}(\lambda_k)$ and $1\le l\le m$,
$$
\sigma(A(0))\cap\{l,l+1,\cdots\}=\emptyset.
$$
For $u_0^{0},\cdots,u_0^{(l-1)}\in\mathbb R^m$ such that
\begin{equation}
\label{eq: ODE Conditions}
(kI-A(0))u_0^{(k)}=f^{(k)}(0)+\sum_{j=0}^{k-1}\begin{pmatrix} k\\j\end{pmatrix}A^{(k-j)}(0)u_0^{(j)},\quad k=0,\cdots,l-1
\end{equation}
holds, there exists a unique solution $u\in C^m([0,T],\mathbb R^n)$ of the problem
$$
tu'(t)=A(t)u(t)+f(t),\quad 0<t\le T,\quad u^{(j)}(0)=u_0^{(j)},\quad j=0,\cdots,l-1.
$$
\end{proposition}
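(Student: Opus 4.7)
The plan is to handle existence and uniqueness separately, using as a common device a formal Taylor polynomial at $t=0$ matching the prescribed data. First, I would extend the data by defining $u_0^{(k)}$ for $l\le k\le m$ through the same recursion \eqref{eq: ODE Conditions}. This is possible because the hypothesis $\sigma(A(0))\cap\{l,l+1,\dots\}=\emptyset$ together with $l\le m$ ensures that $kI-A(0)$ is invertible for every integer $k\in\{l,\dots,m\}$. Setting
$$P(t):=\sum_{k=0}^{m}\frac{u_0^{(k)}}{k!}\,t^k$$
and looking for the solution in the form $u=P+v$, the recursion immediately yields that the residual $g(t):=A(t)P(t)+f(t)-tP'(t)$ lies in $C^m([0,T],\mathbb R^n)$ and satisfies $g^{(k)}(0)=0$ for every $0\le k\le m$; Taylor's theorem then gives $g(t)=o(t^m)$ as $t\to 0^+$.

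The residual $v$ solves the singular equation $tv'=A(t)v+g$ with $v(0)=0$, which I would reformulate as the Volterra-type integral equation
$$v(t)=\int_0^t(t/s)^{A(0)}\bigl[(A(s)-A(0))v(s)+g(s)\bigr]\frac{ds}{s}$$
obtained by inverting the constant-coefficient part through the integrating factor $s^{-A(0)}$. Fixing $\sigma$ with $\max_k\operatorname{Re}\lambda_k<\sigma<m$ and an adapted norm on $\mathbb R^n$ in which $|z^{A(0)}|\le C z^{\sigma}$ whenever $z\ge 1$, I would run a contraction argument in the weighted space $X_\sigma=\{v\in C([0,T_0])\ :\ \sup_{0<t\le T_0}t^{-\sigma}|v(t)|<\infty\}$, exploiting the $C^1$ bound $|A(s)-A(0)|\le Cs$. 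The decisive point is that the size hypothesis $m>\max_k\operatorname{Re}\lambda_k$ provides exactly the integrability of the kernel near $s=0$ needed to absorb the $g$-term and to produce a small Lipschitz constant for $T_0$ small; once $v$ is constructed on $[0,T_0]$, it is extended to $[0,T]$ by standard Picard theory on the regular portion.

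To upgrade $v\in C([0,T])$ to $v\in C^m([0,T])$ with $v^{(k)}(0)=0$ for every $0\le k\le m$, I would bootstrap on the ODE itself: differentiating it $k$ times yields $tv^{(k+1)}=(A(t)-kI)v^{(k)}+R_k(t)$, where $R_k$ involves only $g^{(k)}$ and derivatives of $v$ of order at most $k-1$. Using $|v(t)|\le Ct^\sigma$ with $\sigma$ taken arbitrarily close to $m$, combined with $g^{(k)}(t)=o(t^{m-k})$, a descending induction shows that each $v^{(k)}$ extends continuously to the origin with value zero. For uniqueness, the difference $w=u_1-u_2$ of two solutions satisfies $tw'=A(t)w$ and $w^{(j)}(0)=0$ for $j<l$; rerunning the compatibility cascade at orders $l,\dots,m$ and invoking the invertibility of $(kI-A(0))$ forces $w^{(k)}(0)=0$ up to $k=m$, so $w(t)=o(t^m)$. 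A Gronwall inequality in an adapted norm for which $\langle w,A(0)w\rangle\le m^\ast\|w\|^2$ with $\max_k\operatorname{Re}\lambda_k<m^\ast<m$ then yields $\|w(t)\|\ge c\|w(t_\ast)\|(t/t_\ast)^{m^\ast}$ on a small interval, and combined with the decay $w(t)=o(t^m)$ this forces $w\equiv 0$ near the origin; standard regular ODE uniqueness propagates this to all of $[0,T]$.

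The main obstacle will be the analysis at the singular point $t=0$, where both hypotheses enter essentially: the spectral gap $\sigma(A(0))\cap\{l,l+1,\dots\}=\emptyset$ is what allows one to solve the compatibility cascade and to compare two solutions through their Taylor coefficients at the origin, while the size condition $m>\max_k\operatorname{Re}\lambda_k$ is precisely what makes the Volterra kernel integrable and the Gronwall lower bound useful. Everything on $(0,T]$ is classical ODE theory.
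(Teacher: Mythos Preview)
The paper does not prove this proposition at all; its proof is the one-line citation ``See Theorem~1.11 from \cite{17}.'' Your outline is a correct self-contained argument along the standard lines for first-kind singular ODEs: subtract the formal Taylor jet built from the extended compatibility relations, reduce to a residual problem $tv'=A(t)v+g$ with $g(t)=o(t^m)$, invert the constant-coefficient part via the kernel $(t/s)^{A(0)}$, and close by contraction in a weighted space using $m>\max_k\operatorname{Re}\lambda_k$ for integrability. Two points deserve tightening. First, for the regularity bootstrap you actually need $v(t)=o(t^m)$, not merely $|v(t)|\le Ct^\sigma$ with $\sigma$ close to $m$; this follows by iterating the integral representation, since each pass gains one power of $t$ from the bound $|A(s)-A(0)|\le Cs$ until the $o(t^m)$ contribution from $g$ saturates, after which the ODE directly gives $v^{(k)}(t)=o(t^{m-k})$ for $0\le k\le m$ (so the induction is ascending, not descending). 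Second, in the uniqueness step, differentiating the ODE $m$ times at $t=0$ would require $w\in C^{m+1}$; it is cleaner to insert the $C^m$ Taylor expansions of $w$ and $A$ into $tw'=Aw$ and equate coefficients of $t^k$ for $0\le k\le m$, which yields the cascade $(kI-A(0))w^{(k)}(0)=\sum_{j<k}\binom{k}{j}A^{(k-j)}(0)w^{(j)}(0)$ and hence $w^{(k)}(0)=0$ for all $k\le m$, after which your Gronwall lower bound $\|w(t)\|\ge c\|w(t_\ast)\|(t/t_\ast)^{m^\ast}$ with $m^\ast<m$ indeed forces $w\equiv 0$.
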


\begin{corollary}
There exists unique $\psi_1\in C^1((0,\infty))$ solution to $\mathcal L_\infty (\psi)=0$ with $\psi(1)=1$. Moreover, $\psi_1$ is smooth.
\end{corollary}
\begin{proof}
We write $\mathcal L_\infty(\psi)=0$ in the form required by Proposition \ref{prop: Singular ODE} so for $(\Psi_1,\Psi_2)=(\psi,\partial_\rho\psi)$,
$$
\begin{cases}
(\rho-1)\partial_\rho\Psi_1=(\rho-1)\Psi_2\\
(\rho-1)\partial_\rho\Psi_2=\frac{1}{1+\rho}\left[\frac{p\alpha(d-2-\alpha)}{\rho^2}-\alpha(1+\alpha)\right]\Psi_1+\frac{1}{1+\rho}\left[\frac{d-1}{\rho}-2(1+\alpha)\rho\right]\Psi_2.
\end{cases}
$$
Hence, we can write
$$
(\rho-1)\partial_\rho
\begin{pmatrix}
\Psi_1\\ \Psi_2
\end{pmatrix}
=A(\rho)
\begin{pmatrix}
\Psi_1\\ \Psi_2
\end{pmatrix},
\quad A(0)=
\begin{pmatrix}
0 & 0\\ 
c(\alpha) & s_c-\frac{3}{2}
\end{pmatrix}
$$
for $A$ smooth in $(0,\infty)$. Then since $\sigma(A(0))=\{s_c-\frac{3}{2},0\}$, by Proposition \ref{prop: Singular ODE}, we infer for $a\in\mathbb R$, there exists unique $\psi_a\in C^1((0,\infty))$ solving $\mathcal L_\infty(\psi_a)=0$ with
$$
(\psi_a(0),\psi_a'(0))=(a,0)
$$
and in fact, $\psi_a\in C^\infty((0,\infty))$ so done by setting $a=1$.
\end{proof}

For $0<\rho_0<1$, define the spaces of functions on which we invert our linearized operator $\mathcal L_\infty$:
\begin{equation}
\label{eq: X_rho_0}
\begin{aligned}
X_{\rho_0}&=\bigg\{ w:(\rho_0,\infty)\rightarrow \mathbb R\,\bigg|\, \Vert w\Vert_{X_{\rho_0}}:=\sup_{\rho_0\le\rho\le1}\rho^{\frac{d}{2}-1} |w|+\sup_{\rho\ge1}\rho^{\alpha+1}|w|<\infty\bigg\},\\
Y_{\rho_0}&=\bigg\{ w:(\rho_0,\infty)\rightarrow \mathbb R\,\bigg|\, \Vert w\Vert_{Y_{\rho_0}}:=\int_{\rho_0}^1 \rho^{\frac{d}{2}}|1-\rho|^{\frac{1}{2}-s_c}|w|\,d\rho+\int_{1}^\infty \rho^{\frac{d-1}{2}}|1-\rho|^{\frac{1}{2}-s_c}|w|\,d\rho<\infty\bigg\}.
\end{aligned}
\end{equation}

\begin{proposition}[Exterior resolvent]
\label{prop: Exterior Resolvent}\emph{(i) Basis of fundamental solutions:} There exists $\psi_2$ given by
\begin{equation}
\label{eq: psi_2}
\psi_2:=
\begin{cases}
c_1\psi_1^L & \text{ if }\rho\in(0,1)\\
c_2\psi_1^R & \text{ if }\rho\in(1,\infty).
\end{cases}
\end{equation}
for some $c_i\in \mathbb R$ which is linearly independent of the smooth homogeneous solution $\psi_1$ found in previous lemma and with the Wronskian given by
\begin{equation}
\label{eq: Wronskian}
W:=\psi_1'\psi_2-\psi_2'\psi_1=\rho^{1-d}|1-\rho^2|^{s_c-\frac{3}{2}}.
\end{equation}
The fundamental solutions have asymptotic behaviours:
\begin{equation}
\label{eq: Asymptote of psi_1 and psi_2}
\psi_i\propto \rho^{1-\frac{d}{2}}\sin(\omega \log \rho+\delta_i)\Big[1+\mathcal O_{\rho\to0} (\rho^2)\Big]
\end{equation}
and
\begin{equation}
\label{eq: Asymptote of Lambda psi_1}
\rho^{-1}\psi_1,\ \psi_2,\  \Lambda\psi_1\propto \rho^{-\alpha-1}\Big[1+\mathcal O_{\rho\to\infty} (\rho^{-1})\Big]
\end{equation}
for some $\delta_i\in \mathbb R$.\\\\
\emph{(ii) Continuity of the resolvent:}
There exists a bounded linear operator $\mathcal T:Y_{\rho_0}\rightarrow X_{\rho_0}$ such that $\mathcal L_\infty \circ \mathcal T=\id_{Y_{\rho_0}}$ given by
\begin{equation}
\label{eq: Exterior Resolvent}
\mathcal T(f)=\psi_1\int_{\rho}^\infty \frac{f\psi_2}{(1-r^2)W}\,dr-\psi_2\int_1^\rho \frac{f\psi_1}{(1-r^2)W}\,dr
\end{equation}
with $\Vert T\Vert_{\mathcal L(Y_{\rho_0},X_{\rho_0})}\lesssim 1$ for all $\rho_0>0$. 
\end{proposition}
\begin{proof}
(i): Since $\mathcal L_\infty (\psi_i^L)=0$ and $\mathcal L_\infty (\psi_i^R)=0$, we have from the definition of the Wronskian that
$$
(1-\rho^2)W'+\bigg[\frac{d-1}{\rho}-2(1+\alpha)\rho\bigg]W=0,\quad \rho\in (0,\infty)\setminus\{1\}.
$$
Then $W\propto \rho^{1-d}|1-\rho^2|^{s_c-\frac{3}{2}}$ in $(0,1)$. Also, in view of the asymptotic bahaviour of the hypergeometric functions at $\rho=1$ (see \cite{AS}), $\partial_\rho\psi_1^L$ is singular.  Then, $\psi_1^L$ and $\psi_1$ are linearly independent, so there exists $c_1\in\mathbb R$ such that \eqref{eq: Wronskian} holds. Similarly, $W\propto \rho^{1-d}|1-\rho^2|^{s_c-\frac{3}{2}}$ in $(1,\infty)$ and $\psi_1^R$ and $\psi_1$ are linearly independent, so we can choose $c_2$ with \eqref{eq: Wronskian}. The asymptotic behaviours then follow from the definitions \eqref{eq: Left Fundamental Solutions}.\\\\
(ii): Integrals in \eqref{eq: Exterior Resolvent} are well-defined since
$$\psi_1=
\begin{cases}
\mathcal O_{\rho\rightarrow 1}(1) \\
\mathcal O_{\rho\rightarrow \infty}(\rho^{-\alpha}) 
\end{cases}
,\quad
\psi_2=
\begin{cases}
\mathcal O_{\rho\rightarrow 1}(1)\\
\mathcal O_{\rho\rightarrow \infty}(\rho^{-\alpha-1})
\end{cases}
,\quad
\frac{1}{(1-\rho^2)W}=
\begin{cases}
\mathcal O_{\rho\rightarrow 1}((\rho-1)^{\frac{1}{2}-s_c})\\
\mathcal O_{\rho\rightarrow \infty}(\rho^{2\alpha})
\end{cases}
$$
(see  \cite{AS}). Using variation of constants, 
$$
w=\psi_1\bigg(a_1+\int_{\rho}^\infty \frac{f\psi_2}{(1-r^2)W}\,dr\bigg)-\psi_2\bigg(a_2+\int_1^\rho \frac{f\psi_1}{(1-r^2)W}\,dr\bigg).
$$
solves 
$$
\mathcal L_\infty(w)=f.
$$
Since we require $\mathcal T: Y_{\rho_0}\rightarrow X_{\rho_0}$, we choose $a_1=0$. Since $\psi_2'=\mathcal O(\rho-1)^{s_c-\frac{3}{2}}$ as $\rho\rightarrow 1$ (see \cite{AS}), by requiring $\mathcal T(f)$ to be differentiable at $\rho=1$ we take $a_2=0$. It suffices to prove that $\mathcal T$ is bounded. For all $\rho\ge1$,
$$
\begin{aligned}
&\rho^{1+\alpha}|\mathcal T(f)(\rho)|\lesssim \rho^{1+\alpha}\bigg(|\psi_1|\int_{\rho}^\infty \bigg|\frac{f\psi_2}{(1-r^2)W}\bigg|\,dr+|\psi_2|\int_1^\rho\bigg|\frac{f\psi_1}{(1-r^2)W}\bigg|\,dr\bigg)\\
\lesssim &\sup_{\rho\ge1}\bigg(\rho\int_\rho^\infty r^{\frac{d-3}{2}}(r-1)^{\frac{1}{2}-s_c}|f|\,dr +\int_1^\rho r^{\frac{d-1}{2}}(r-1)^{\frac{1}{2}-s_c}|f|\,dr\bigg) \lesssim \Vert f\Vert_{Y_{\rho_0}}.
\end{aligned}
$$
For all $\rho_0\le\rho\le 1$,
$$
\begin{aligned}
\rho^{\frac{d}{2}-1} |\mathcal T(f)(\rho)|&\lesssim \rho^{\frac{d}{2}-1}\bigg(|\mathcal T(f)(1)|+|\psi_1|\int_\rho^1\bigg|\frac{f\psi_2}{(1-r^2)W}\bigg|\, dr+|\psi_2|\int_{\rho}^1\bigg|\frac{f\psi_1}{(1-r^2)W}\bigg|\, dr\bigg)\\
&\lesssim \Vert f\Vert_{Y_{\rho_0}}+\sup_{\rho_0\le r\le 1}\int_{r}^1 s^{\frac{d}{2}}(s-1)^{\frac{1}{2}-s_c}|f|\,ds\lesssim \Vert f\Vert_{Y_{\rho_0}}
\end{aligned}
$$
where in the final inequality, we used $\psi_i=\mathcal O(\rho^{1-\frac{d}{2}})$ and $\frac{1}{(1-\rho^2)W}=\mathcal O(\rho^{d-1})$ as $\rho\rightarrow 0$. Thus, $\Vert \mathcal T(f) \Vert_{X_{\rho_0}}\lesssim \Vert f\Vert_{Y_{\rho_0}}$.
\end{proof}

\subsection{Exterior solutions}

We now solve \eqref{eq: Self-similar Profile} in the exterior region $\rho>\rho_0$ as a fixed point problem involving $\mathcal L_\infty$. We first prove a Lipschitz type bound on the nonlinear term.\\

\begin{lemma}[Non-linear bounds]
\label{lem: Non-linear Bounds}
For $w\in X_{\rho_0}$ and $\varepsilon>0$, define
\begin{equation}
\label{eq: G}
G[\psi_1,\varepsilon]w=\underbrace{(\psi_1+w)^2}_{:=A[\psi_1]w}\underbrace{\int_0^1(1-s)(u_\infty+s\varepsilon(\psi_1+w))^{p-2}\,ds}_{:=B[\psi_1,\varepsilon]w}\bigg].
\end{equation}
Then for all $\varepsilon\ll\rho_0^{s_c-1}$ and $w_1,\, w_2\in B_{X_{\rho_0}}=\{ w\in X_{\rho_0}\ | \ \Vert w\Vert_{X_{\rho_0}}<1\}$,
\begin{equation}
\label{eq: Non-linear Bounds}
\Vert G[\psi_1,\varepsilon]w_1\Vert_{Y_{\rho_0}}\lesssim \rho_0^{1-s_c},\quad \Vert G[\psi_1,\varepsilon]w_1-G[\psi_1,\varepsilon]w_2\Vert_{Y_{\rho_0}}\lesssim  \rho_0^{1-s_c}\Vert w_1-w_2\Vert _{X_{\rho_0}}.
\end{equation}
\end{lemma}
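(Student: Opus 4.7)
The plan is to reduce both bounds in \eqref{eq: Non-linear Bounds} to pointwise estimates on the factors $A[\psi_1]w$ and $B[\psi_1,\varepsilon]w$, and then integrate against the $Y_{\rho_0}$-weights. Throughout, I will invoke the identities $\alpha=\tfrac{2}{p-1}$, $\alpha(p-2)=2-\alpha$, and $s_c=\tfrac{d}{2}-\alpha$ to simplify exponents, together with the fact that $u_\infty=b_\infty\rho^{-\alpha}$ is the explicit singular self-similar solution to \eqref{eq: Self-similar Profile}.

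I first control the integral factor $B$. Combining the asymptote \eqref{eq: Asymptote of psi_1 and psi_2} with the hypothesis $w\in B_{X_{\rho_0}}$ gives $|\psi_1+w|\lesssim \rho^{1-d/2}$ on $[\rho_0,1]$ and $|\psi_1+w|\lesssim\rho^{-\alpha}$ on $[1,\infty)$. The ratio $\varepsilon|\psi_1+w|/u_\infty$ is therefore bounded by $\varepsilon\rho^{1-s_c}$ on $[\rho_0,1]$ and by $\varepsilon$ on $[1,\infty)$, and both quantities are $\ll 1$ by the smallness hypothesis $\varepsilon\ll\rho_0^{s_c-1}$. Hence $u_\infty+s\varepsilon(\psi_1+w)\sim u_\infty\sim\rho^{-\alpha}$ uniformly in $s\in[0,1]$ and $\rho\in(\rho_0,\infty)$, which yields $|B[\psi_1,\varepsilon]w|\lesssim u_\infty^{p-2}\sim\rho^{\alpha-2}$. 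Combined with $|A[\psi_1]w|\lesssim\rho^{2-d}$ on $[\rho_0,1]$ and $|A[\psi_1]w|\lesssim\rho^{-2\alpha}$ on $[1,\infty)$, I obtain the pointwise bounds $|G|\lesssim \rho^{\alpha-d}$ on $[\rho_0,1]$ and $|G|\lesssim\rho^{-\alpha-2}$ on $[1,\infty)$. The first piece of the $Y_{\rho_0}$-norm then reduces to $\int_{\rho_0}^1\rho^{-s_c}(1-\rho)^{1/2-s_c}\,d\rho$, which is $\lesssim\rho_0^{1-s_c}$ since $1<s_c<\tfrac{3}{2}$ ensures that the lower endpoint dominates and the integrand remains integrable at $\rho=1$. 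The second piece is an $O(1)\lesssim\rho_0^{1-s_c}$ contribution, with integrand behaviour $\sim(\rho-1)^{1/2-s_c}$ near $\rho=1$ and $\sim\rho^{-2}$ as $\rho\to\infty$.

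For the Lipschitz estimate I decompose
$$
G[\psi_1,\varepsilon]w_1-G[\psi_1,\varepsilon]w_2 = (A[\psi_1]w_1-A[\psi_1]w_2)\,B[\psi_1,\varepsilon]w_1+A[\psi_1]w_2\,(B[\psi_1,\varepsilon]w_1-B[\psi_1,\varepsilon]w_2).
$$
The first difference factors as $A[\psi_1]w_1-A[\psi_1]w_2=(w_1-w_2)(2\psi_1+w_1+w_2)$ and obeys the same pointwise bound as $A[\psi_1]w$ with an extracted $\Vert w_1-w_2\Vert_{X_{\rho_0}}$, so the first term reproduces the preceding estimate with that extra factor. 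For the second, the mean-value theorem applied inside the integral defining $B$ gives $|B[\psi_1,\varepsilon]w_1-B[\psi_1,\varepsilon]w_2|\lesssim\varepsilon|w_1-w_2|\,u_\infty^{p-3}\sim\varepsilon|w_1-w_2|\rho^{2\alpha-2}$; the extra $\varepsilon$ absorbs the loss of $u_\infty^{-1}\sim\rho^\alpha$ via $\varepsilon\rho^{1-s_c}\ll 1$, and the same integration produces the bound $\Vert w_1-w_2\Vert_{X_{\rho_0}}\rho_0^{1-s_c}$.

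The main obstacle is purely bookkeeping: one has to match, line by line, the exponent in each pointwise bound against the $Y_{\rho_0}$-weight so that integration produces exactly the factor $\rho_0^{1-s_c}$, and confirm that $\varepsilon\ll\rho_0^{s_c-1}$ is precisely the threshold at which $u_\infty$ dominates the perturbation $\varepsilon(\psi_1+w)$ down to $\rho=\rho_0$. No conceptual difficulty arises beyond the algebraic cancellations afforded by the identities $\alpha(p-1)=2$ and $s_c=d/2-\alpha$.
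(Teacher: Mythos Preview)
Your proposal is correct and follows essentially the same approach as the paper: pointwise control of $A$ and $B$ on the two regions $[\rho_0,1]$ and $[1,\infty)$, integration against the $Y_{\rho_0}$-weight, and for the Lipschitz part the same $A$--$B$ splitting with the mean-value theorem on $B$. Your organizing observation $u_\infty+s\varepsilon(\psi_1+w)\sim u_\infty$ is a slightly cleaner way to package the estimate on $B$ (and on its $w$-derivative) than the paper's version, since it gives both upper and lower bounds at once and thus handles the sign of $p-3$ uniformly.
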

\begin{proof}
Note that for all $\rho\ge1$,
$$
|\psi_1(\rho)|+|w_1(\rho)|\lesssim |u_\infty(\rho)|.
$$
Since $\psi_1=\mathcal O( \rho^{-\alpha})$ as $\rho\rightarrow\infty$ and $\varepsilon\lesssim 1$,
$$
\begin{aligned}
|G[\psi_1,\varepsilon]w_1(\rho)|&\lesssim (|\psi_1|+|w_1|)^2\Big[|u_\infty|+\varepsilon (|\psi_1|+|w_1|)\Big]^{p-2}\\
&\lesssim \rho^{-2\alpha}\bigg(1+\sup_{r\ge 1} r^{\alpha+1}|w_1|\bigg)^2|u_\infty(\rho)|^{p-2}\\
&\lesssim \rho^{-\alpha-2}(1+\Vert w_1\Vert_{X_{\rho_0}})^2\lesssim \rho^{-\alpha-2}
\end{aligned}
$$
so
$$
\int_1^\infty \rho^{\frac{d-1}{2}}|1-\rho|^{\frac{1}{2}-s_c}|G[\psi_1,\varepsilon]w_1|\,d\rho\lesssim \int_1^\infty \rho^{s_c-\frac{5}{2}}|1-\rho|^{\frac{1}{2}-s_c}\,d\rho\lesssim1.
$$
Note that since $\psi_1=\mathcal O(\rho^{1-\frac{d}{2}})$ as $\rho\rightarrow0$, for all $\rho_0\le\rho\le 1$,
$$
|\psi_1(\rho)|+|w_1(\rho)|\lesssim \rho^{1-\frac{d}{2}}\lesssim \rho^{1-s_c}|u_\infty(\rho)|.
$$
Then since $\varepsilon\ll\rho_0^{s_c-1}$,
$$
\begin{aligned}
|G[\psi_1,\varepsilon]w_1|&\lesssim \rho^{2-d}\bigg(1+\sup_{\rho\le r\le 1} r^{\frac{d}{2}-1}| w_1|\bigg)^2|u_\infty(\rho)|^{p-2}\\
&\lesssim \rho^{\alpha-d}(1+\Vert w_1 \Vert_{X_{\rho_0}})^2\lesssim \rho^{\alpha-d}.
\end{aligned}
$$
Then
$$
\int_{\rho_0}^1 \rho^{\frac{d}{2}}(1-\rho)^{\frac{1}{2}-s_c}|G[\psi_1,\varepsilon]w_1|\, d\rho\lesssim \int_{\rho_0}^1 \rho^{-s_c}(1-\rho)^{\frac{1}{2}-s_c}\,d\rho\lesssim\rho_0^{1-s_c}.
$$
Hence, the first bound in \eqref{eq: Non-linear Bounds} holds. For the contraction estimate, note that
$$
\begin{aligned}
&|G[\psi_1,\varepsilon]w_1-G[\psi_1,\varepsilon]w_2| \le|Aw_1-Aw_2|\,|Bw_1|+|Aw_2|\, |Bw_1-Bw_2|\\
&\lesssim|2\psi_1+w_1+w_2|\, |w_1-w_2|\,\Big[|u_\infty|+\varepsilon (|\psi_1|+|w_1|)\Big]^{p-2}+\varepsilon|w_1-w_2|(\psi_1+w_2)^2 I_{w_1,w_2}
\end{aligned}
$$
where
$$
\begin{aligned}
I_{w_1,w_2}:&=\bigg|\int_0^1\varepsilon^{-1}\partial_w B[\psi_1,\varepsilon]w\Big|_{w_2+\sigma(w_1-w_2)}\,d\sigma\bigg|\\
&\lesssim\bigg|\int_0^1s(1-s)\int_0^1(u_\infty+s\varepsilon (\psi_1+w_2)+\sigma s\varepsilon (w_1-w_2))^{p-3}\,d\sigma ds\bigg|\\
&\lesssim \Big[|u_\infty|+\varepsilon (|\psi_1|+|w_1|+|w_2|)\Big]^{p-3} \lesssim u_\infty^{p-3}
\end{aligned}
$$
where the final inequality follows since $\varepsilon\ll\rho_0^{s_c-1}$. Then
$$
|G[\psi_1,\varepsilon]w_1-G[\psi_1,\varepsilon]w_2|\lesssim \Big[(|\psi_1|+|w_1|+|w_2|)|u_\infty|^{p-2}+\varepsilon(|\psi_1|+|w_2|)^2|u_\infty|^{p-3}\Big]|w_1-w_2|.
$$
Since $\psi_1=\mathcal O(\rho^{-\alpha})$ as $\rho\rightarrow \infty$,
$$
\int_1^\infty \rho^{\frac{d-1}{2}}|1-\rho|^{\frac{1}{2}-s_c}|G[\psi_1,\varepsilon]w_1-G[\psi_1,\varepsilon]w_2|\,d\rho\lesssim  \int_1^\infty \rho^{s_c-\frac{7}{2}}|1-\rho|^{\frac{1}{2}-s_c}\,d\rho\,\Vert w_1-w_2\Vert_{X_{\rho_0}}.
$$
Since $\psi_1=\mathcal O(\rho^{1-\frac{d}{2}})$ as $\rho\rightarrow 0$, for all $\rho_0\le \rho \le 1$,
$$
\begin{aligned}
|G[\psi_1,\varepsilon]w_1-G[\psi_1,\varepsilon]w_2|&\lesssim \bigg(\rho^{2(1-\frac{d}{2})-\alpha(p-2)}+\varepsilon\rho^{3(1-\frac{d}{2})-\alpha(p-3)} \bigg)\sup_{\rho_0\le r\le 1}r^{\frac{d}{2}-1}|w_1-w_2|\\
&\lesssim \rho^{\alpha-d}\Vert w_1-w_2\Vert _{X_{\rho_0}}
\end{aligned}
$$
where the final inequality holds by our choice of $\varepsilon$. Thus,
$$
\int_{\rho_0}^1 \rho^{\frac{d}{2}}|1-\rho|^{\frac{1}{2}-s_c}|G[\psi_1,\varepsilon]w_1-G[\psi_1,\varepsilon]w_2|\, d\rho\lesssim \rho_0^{1-s_c}\Vert w_1-w_2\Vert _{X_{\rho_0}}.
$$
Hence, the second bound in \eqref{eq: Non-linear Bounds} holds.
\end{proof}

We are now in position to solve \eqref{eq: Self-similar Profile}. We in particular, prove the existence of a one-parameter family of smooth solutions in the region $\rho>\rho_0$.\\
\begin{proposition}[Exterior solutions]
\label{prop: Exterior Solutions}
For all $0<\varepsilon\ll\rho_0^{s_c-1}$, there exists a smooth solution to \eqref{eq: Self-similar Profile} of the form
$$
u=u_\infty+\varepsilon(\psi_1+w)
$$
with
\begin{equation}
\label{eq: Bounds on w}
 \Vert w\Vert _{X_{\rho_0}}\lesssim\varepsilon \rho_0^{1-s_c},\quad \Vert \Lambda w\Vert _{X_{\rho_0}}\lesssim \varepsilon \rho_0^{1-s_c}.
\end{equation}
Furthermore,
$$
w|_{\varepsilon=0}=0,\quad \Vert \partial_\varepsilon w|_{\varepsilon=0}\Vert _{X_{\rho_0}}\lesssim \rho_0^{1-s_c}.
$$
\end{proposition}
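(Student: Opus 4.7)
The plan is a Banach fixed-point argument for $w$ on the space $X_{\rho_0}$, based on the resolvent $\mathcal T$ of Proposition \ref{prop: Exterior Resolvent} together with the nonlinear estimates of Lemma \ref{lem: Non-linear Bounds}. First I substitute $u = u_\infty + \varepsilon(\psi_1+w)$ into \eqref{eq: Self-similar Profile}, use $\mathcal L_\infty\psi_1=0$ from Lemma \ref{lem: psi_1}, and expand the nonlinearity through the Taylor identity
$$
(u_\infty+\varepsilon v)^p - u_\infty^p - p\,u_\infty^{p-1}\varepsilon v \;=\; p(p-1)\,\varepsilon^2\,v^2\!\int_0^1(1-s)(u_\infty+s\varepsilon v)^{p-2}\,ds
$$
with $v:=\psi_1+w$. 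After these cancellations the self-similar equation collapses to
$$
\mathcal L_\infty w \;=\; -p(p-1)\,\varepsilon\, G[\psi_1,\varepsilon]w,
$$
and inverting with $\mathcal T$ turns this into the fixed-point formulation
$$
w \;=\; F_\varepsilon(w) \;:=\; -p(p-1)\,\varepsilon\,\mathcal T\!\left(G[\psi_1,\varepsilon]w\right).
$$

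Combining the continuity $\|\mathcal T\|_{\mathcal L(Y_{\rho_0},X_{\rho_0})}\lesssim 1$ with \eqref{eq: Non-linear Bounds}, I obtain, for any $w,w_1,w_2$ in the unit ball of $X_{\rho_0}$,
$$
\|F_\varepsilon(w)\|_{X_{\rho_0}} \lesssim \varepsilon\,\rho_0^{1-s_c}, \qquad
\|F_\varepsilon(w_1)-F_\varepsilon(w_2)\|_{X_{\rho_0}} \lesssim \varepsilon\,\rho_0^{1-s_c}\,\|w_1-w_2\|_{X_{\rho_0}}.
$$
The hypothesis $\varepsilon \ll \rho_0^{s_c-1}$ makes the contraction constant strictly smaller than $1$, so Banach's theorem delivers a unique $w$ in the closed ball $\{\|w\|_{X_{\rho_0}}\le C\varepsilon\rho_0^{1-s_c}\}$, giving the first estimate of \eqref{eq: Bounds on w}.

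To upgrade to the $\Lambda w$ bound I would rerun the same contraction in the augmented norm $\|\cdot\|_{X_{\rho_0}}+\|\Lambda(\cdot)\|_{X_{\rho_0}}$. Since $\Lambda u_\infty = 0$, the product rule applied to $G[\psi_1,\varepsilon]w$, combined with the refined estimate \eqref{eq: Refined Control of psi_1} for $\Lambda\psi_1$ and the analogous asymptotic control of $\Lambda\psi_2$ read off from \eqref{eq: Left Fundamental Solutions}--\eqref{eq: Right Fundamental Solutions}, reproduces the same pattern of $Y_{\rho_0}$-bounds for $\Lambda G[\psi_1,\varepsilon]w$ as in Lemma \ref{lem: Non-linear Bounds}; the boundary terms generated by differentiating the kernel in \eqref{eq: Exterior Resolvent} are absorbed via the Wronskian normalization $W = \rho^{1-d}|1-\rho^2|^{s_c-3/2}$. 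Smoothness of $u$, and hence of $w$, then follows from classical ODE regularity on $(\rho_0,1)\cup(1,\infty)$, together with an application of Proposition \ref{prop: Singular ODE} to the associated first-order system at the regular singular point $\rho=1$, exactly as in the proof of Lemma \ref{lem: psi_1}.

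The identity $w|_{\varepsilon=0}=0$ is immediate from $F_0(0)=0$ and uniqueness of the fixed point. Differentiating the fixed-point equation in $\varepsilon$ at $\varepsilon=0$ (or invoking the implicit function theorem, whose non-degeneracy hypothesis is automatic because the linearization at $\varepsilon=0$ is the identity) yields
$$
\partial_\varepsilon w|_{\varepsilon=0} \;=\; -p(p-1)\,\mathcal T\!\left(G[\psi_1,0]\,0\right) \;=\; -\tfrac{p(p-1)}{2}\,\mathcal T\!\left(\psi_1^2\,u_\infty^{p-2}\right),
$$
so $\|\partial_\varepsilon w|_{\varepsilon=0}\|_{X_{\rho_0}}\lesssim \rho_0^{1-s_c}$ reduces to $\|G[\psi_1,0]\,0\|_{Y_{\rho_0}}\lesssim \rho_0^{1-s_c}$, which is the first estimate of Lemma \ref{lem: Non-linear Bounds} at $w_1=0$, $\varepsilon=0$, combined with continuity of $\mathcal T$. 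I expect the main obstacle to be the $\Lambda w$ estimate, which requires careful tracking of $\Lambda\psi_{1,2}$ near $\rho=0,1,\infty$ and exploiting the Wronskian cancellation in the derivative of \eqref{eq: Exterior Resolvent}; verifying smoothness at the singular point $\rho=1$ is the other technically delicate step, but it is handled by a direct application of Proposition \ref{prop: Singular ODE}.
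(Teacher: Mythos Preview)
Your fixed-point setup, the smoothness argument via Proposition~\ref{prop: Singular ODE} at $\rho=1$, and the $\partial_\varepsilon w|_{\varepsilon=0}$ computation all match the paper. The one genuine difference is in the $\Lambda w$ bound: you propose to rerun the contraction in the augmented norm $\|\cdot\|_{X_{\rho_0}}+\|\Lambda(\cdot)\|_{X_{\rho_0}}$ and to establish $Y_{\rho_0}$-bounds on $\Lambda G[\psi_1,\varepsilon]w$. The paper avoids both. Once $w$ is fixed by the contraction, one applies $\Lambda=\alpha+\rho\partial_\rho$ directly to the identity $w=-p(p-1)\varepsilon\,\mathcal T(G[\psi_1,\varepsilon]w)$; the boundary terms from differentiating the two integrals cancel (as you correctly anticipate), leaving
\[
\Lambda w = -p(p-1)\varepsilon\bigg[(\Lambda\psi_1)\int_\rho^\infty\frac{G\,\psi_2}{(1-r^2)W}\,dr-(\Lambda\psi_2)\int_1^\rho\frac{G\,\psi_1}{(1-r^2)W}\,dr\bigg],
\]
with the \emph{same} $G$ inside the integrals, not $\Lambda G$. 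Since $\Lambda\psi_i$ have the same asymptotic orders as $\psi_i$ near $0$ and $1$ (and in fact better for $\psi_1$ at infinity, by \eqref{eq: Refined Control of psi_1}), the bound $\|\Lambda w\|_{X_{\rho_0}}\lesssim\varepsilon\|G[\psi_1,\varepsilon]w\|_{Y_{\rho_0}}\lesssim\varepsilon\rho_0^{1-s_c}$ follows immediately from the resolvent estimate of Proposition~\ref{prop: Exterior Resolvent}, with no second contraction and no need to differentiate the nonlinearity. Your route could be made to work but is more laborious; the paper's observation that $\Lambda$ passes through $\mathcal T$ onto the outer $\psi_i$ factors only is the cleaner mechanism.
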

\begin{proof}
$u=u_\infty+\varepsilon v>0$ solves \eqref{eq: Self-similar Profile} if and only if 
$$
\begin{aligned}
\mathcal L_\infty(v)&=\varepsilon^{-1}[u_\infty^p+pu_\infty^{p-1}\varepsilon v-(u_\infty+\varepsilon v)^p]\\
&=-p(p-1)\varepsilon v^2\int_0^1(1-s)(u_\infty+s\varepsilon v)^{p-2}\,ds.
\end{aligned}
$$
We further decompose $v=\psi_1+ w$. Since $\mathcal L_\infty (\psi_1) =0$,
\begin{equation}
\label{eq: Fixed Point}
w=-p(p-1)\varepsilon\mathcal T\circ G[\psi_1,\varepsilon] w.
\end{equation}
Lemma \ref{lem: Non-linear Bounds} together with Proposition \ref{prop: Exterior Resolvent} states precisely that for $\varepsilon\ll\rho_0^{s_c-1}$,
$$
-p(p-1)\varepsilon \mathcal T\circ G[\psi_1,\varepsilon]:B_{X_{\rho_0}}\rightarrow B_{X_{\rho_0}}
$$
is a contraction map. From the Banach fixed point theorem, there exists a unique solution $w$ to \eqref{eq: Fixed Point} with $\Vert w\Vert_{X_{\rho_0}}\lesssim \varepsilon \rho_0^{1-s_c}$. Clearly, $w$ is smooth in $(0,\infty)\setminus \{1\}$. In view of \eqref{eq: Fixed Point}, $w\in C^1((0,\infty))$ so $u\in C^1((0,\infty))$. Writing \eqref{eq: Self-similar Profile} in the form required by Proposition \ref{prop: Singular ODE}, for $(\Psi_1,\Psi_2)=(u,u')$,
$$
\begin{cases}
(\rho-1)\partial_\rho \Psi_1=(\rho-1)\Psi_2\\
(\rho-1)\partial_\rho \Psi_2=-\frac{\alpha(\alpha+1)}{1+\rho}\Psi_1+\frac{1}{1+\rho}\left[\frac{d-1}{\rho}-2(\alpha+1)\rho\right]\Psi_2+\frac{u^p}{1+\rho}.
\end{cases}
$$
Hence,
$$
(\rho-1)\partial_\rho\begin{pmatrix} \Psi_1\\ \Psi_2\end{pmatrix} = A(\rho)\begin{pmatrix} \Psi_1\\ \Psi_2\end{pmatrix} +\frac{1}{\rho+1}\begin{pmatrix} 1\\u^p\end{pmatrix} 
$$
where $A$ is smooth in $(0,\infty)$ and 
$$
A(1)=\frac{1}{2}
\begin{pmatrix}
0 & 0\\
-\alpha(\alpha+1) & 2s_c-3
\end{pmatrix}
$$
with $\sigma(A(1))=\{s_c-\frac{3}{2},0\}$. By Proposition \ref{prop: Singular ODE}, since $u\in C^1((0,\infty))$, $(u,u')\in C^1(0,\infty)$ so $u\in C^2((0,\infty))$. Iterating this, we conclude that $u$ is smooth. \\\\
Applying $\Lambda$ to \eqref{eq: Fixed Point}, we infer
$$
\Lambda w=-p(p-1)\varepsilon \bigg[(\Lambda \psi_1)\int_{\rho}^\infty \frac{G[\psi_1,\varepsilon](w)\psi_2}{(1-r^2)W}\,dr-(\Lambda\psi_2)\int_1^\rho \frac{G[\psi_1,\varepsilon](w)\psi_1}{(1-r^2)W}\,dr\bigg].
$$
Hence, by considering the asymptotes of $\Lambda \psi_i$ and proceeding as in the proof of Proposition \ref{prop: Exterior Resolvent}, we infer
$$
\Vert\Lambda w\Vert_{X_{\rho_0}}\lesssim \varepsilon \Vert G[\psi_1,\varepsilon]w\Vert_{Y_{\rho_0}}\lesssim \varepsilon \rho_0^{1-s_c}.
$$
In view of \eqref{eq: Fixed Point}, $w|_{\varepsilon=0}=0$. Differentiating \eqref{eq: Fixed Point} in $\varepsilon$,
$$
\begin{aligned}
\partial_{\varepsilon}w|_{\varepsilon=0}&=-p(p-1)\bigg(\mathcal T\circ G[\psi_1,0]w|_{\varepsilon=0}+\varepsilon\mathcal T(\partial_{\varepsilon}G[\psi_1,\varepsilon]w)|_{\varepsilon=0}\bigg)\\
&=-p(p-1) \mathcal T\circ G[\psi_1,0]w|_{\varepsilon=0}= -\frac{p(p-1)}{2}\mathcal T(u_\infty^{p-2}\psi_1^2)
\end{aligned}
$$
so by continuity of the resolvent and the asymptotic behaviour of $\psi_1$ as $\rho\rightarrow 0$ and $\rho\rightarrow\infty$,
$$
\Vert \partial_{\varepsilon}w|_{\varepsilon=0}\Vert_{X_{\rho_0}}\lesssim \Vert u_\infty^{p-2}\psi_1^2\Vert_{Y_{\rho_0}}\lesssim \int_{\rho_0}^1\rho^{-s_c}|1-\rho|^{\frac{1}{2}-s_c}\,d\rho+ \int_1^\infty\rho^{s_c-\frac{5}{2}}|1-\rho|^{\frac{1}{2}-s_c}\,d\rho\lesssim \rho_0^{1-s_c}.
$$
\end{proof}

\section{Construction of interior solutions}

In this section, we construct inner solutions to the self-similar equation \eqref{eq: Self-similar Profile} which are perturbations of a rescaled soliton. The steps are similar to that of the previous section.\\

Let us first introduce some notations for this section.\\\\
\underline{\emph{Linearized Operator}}. Recall the definition of soliton solution
\begin{equation}
\label{eq: Soliton}
\Delta Q+Q^p=0,\quad Q(\rho)=b_\infty\rho^{-\alpha}+\mathcal O_{\rho\rightarrow\infty } (\rho^{1-\frac{d}{2}}).
\end{equation}
We let the linearized operator $\mathcal H_\infty$ near 
$$
Q_\lambda(\rho):=\lambda^{-\alpha}Q\left(\frac{\rho}{\lambda}\right),\,\,\,\, \lambda>0.
$$ 
for the profile equation \eqref{eq: Self-similar Profile} be
\begin{equation}
\label{eq: H_infty}
\mathcal H_\infty =-\Delta -pQ^{p-1}=-\frac{d^2}{d\rho^2}-\frac{d-1}{\rho}\frac{d}{d\rho}-pQ^{p-1}.
\end{equation}

\begin{lemma}[Fundamental solutions of $\mathcal H_\infty$]
\label{lemma:homogeneoussolutionsofH}
Recall from above the definition of the soliton $Q$. We then have a basis of fundamental solutions
$$
\mathcal H_\infty(\Lambda Q)=0, \quad \mathcal H_\infty\varphi=0
$$
with the following asymptotic behavior as $\rho\to\infty$
\begin{equation}
\label{eq: Asymptotes of Lambda Q and phi}
\Lambda Q,\, \varphi \propto \rho^{1-\frac{d}{2}}\sin(\omega\log\rho+\delta_{\bullet})+\mathcal O(\rho^{2-d+\alpha})
\end{equation}
for some $\delta_{\Lambda Q}$, $\delta_{\varphi}\in \mathbb R$. By scaling $\varphi$ if necessary, we assume that the Wronskian is given by
$$
W:=(\Lambda Q)'\varphi-\varphi'\Lambda Q=-\rho^{1-d}.
$$
\end{lemma}

\begin{proof}
Recall the definition of $Q_\lambda$ above. Then, for all $\lambda>0$,
$$
\Delta Q_\lambda +Q_\lambda^p=0
$$
and differentiating with respect to $\lambda$ and evaluating at $\lambda=1$ yields $\mathcal H_\infty(\Lambda Q)=0$. Let $\varphi$ be another solution to $\mathcal H_\infty(\varphi)=0$ which does not depend linearly on $\Lambda Q$, we aim at deriving the asymptotic of both $\Lambda Q$ and $\varphi$ as $\rho\to \infty$. We first solve
\begin{equation}
\label{eq:equationasymptotictoH}
-\tilde\varphi ''- \frac{d-1}{\rho}\tilde\varphi' -\frac{pb_\infty^{p-1}}{\rho^2}\tilde\varphi=f.
\end{equation}
The homogeneous problem admits the explicit basis of solutions
\begin{equation}\label{eq:explictiformulaforvarphi1andvarphi2}
\varphi_1=\rho^{1-\frac{d}{2}}\sin(\omega\log\rho),\,\,\,\,\varphi_2=\rho^{1-\frac{d}{2}}\cos(\omega\log\rho),
\end{equation}
and the corresponding Wronskian is given by
$$W=\varphi_1'\varphi_2-\varphi_2'\varphi_1=\omega \rho^{1-d}.$$
Using the variation of constants, the solutions to \eqref{eq:equationasymptotictoH} are given by
$$\tilde\varphi(\rho)=\varphi_1\left(a_1+\int_\rho^{\infty}f\varphi_2\frac{{r}^{d-1}}{\omega}dr\right)+\varphi_2\left(a_2-\int_\rho^{\infty}f\varphi_1\frac{{r}^{d-1}}{\omega}dr\right).$$
Then, we rewrite the equation $\mathcal H_\infty(\varphi)=0$:
$$-\varphi'' - \frac{2}{\rho}\varphi'-\frac{pb_\infty^{p-1}}{\rho^2}\varphi = p\bigg(Q^{p-1}-\frac{b_\infty^{p-1}}{\rho^2}\bigg)\varphi,$$
and hence
\begin{equation}\label{eq:easylinearfixedpoint}
\varphi=a_1\varphi_1+a_2\varphi_2+\widetilde{\phi},\,\,\,\,\widetilde{\phi}=\mathcal G(\widetilde{\phi})
\end{equation}
where
$$
\begin{aligned}
\mathcal G(\widetilde{\phi})(\rho) &= \varphi_1\int_\rho^\infty p\bigg(Q^{p-1}-\frac{b_\infty^{p-1}}{{r}^2}\bigg)\left(a_1\varphi_1+a_2\varphi_2+\widetilde{\phi}\right)\varphi_2\frac{{r}^{d-1}}{\omega}dr\\
&+\varphi_2\int_\rho^\infty p\bigg(Q^{p-1}-\frac{b_\infty^{p-1}}{{r}^2}\bigg)\left(a_1\varphi_1+a_2\varphi_2+\widetilde{\phi}\right)\varphi_1\frac{{r}^{d-1}}{\omega}dr.
\end{aligned}
$$
In view of the asymptotic behaviour \eqref{eq: Soliton} for $Q$, we infer for all $\rho\ge1$, 
$$
\bigg|p\bigg(Q^{p-1}-\frac{b_\infty^{p-1}}{{\rho}^2}\bigg)\bigg|\lesssim \rho^{-1-s_c}
$$
We infer for $\rho\geq 1$
$$
|\mathcal G(\widetilde{\phi})(\rho)| \lesssim \rho^{1-\frac{d}{2}}\int_\rho^\infty\left(r^{-s_c}+r^{\alpha-1}|\widetilde{\phi}|\right)dr\lesssim \rho^{2-d+\alpha}+\rho^{1-\frac{d}{2}}\int_\rho^\infty r^{\alpha-1}|\widetilde{\phi}| dr
$$
and similarly,
$$
|\mathcal G(\widetilde{\phi}_1)(\rho)-\mathcal G(\widetilde{\phi}_2)(\rho)| \lesssim \rho^{1-\frac{d}{2}}\int_\rho^\infty r^{\alpha-1}|\widetilde{\phi}_1-\widetilde{\phi}_2| dr.
 $$ 
 Thus, for $R\geq 1$ large enough, the Banach fixed point theorem applies and yields a unique solution $\widetilde{\phi}$ to \eqref{eq:easylinearfixedpoint} in the space corresponding to the norm
 $$\sup_{\rho\geq R}\,\rho^{d-\alpha-2}|\widetilde{\phi}|.$$
 In particular, in view of the explicit formula \eqref{eq:explictiformulaforvarphi1andvarphi2} for $\varphi_1$ and $\varphi_2$, and in view of the fact that $\mathcal H_\infty(\Lambda Q)=0$ and $\mathcal H_\infty(\varphi)=0$, we infer \eqref{eq: Asymptotes of Lambda Q and phi}
\end{proof}

For $\rho_1\ge 1$, we define the space of functions on which we invert our linearlized operator $\mathcal H_\infty$:
\begin{equation}
\begin{aligned}
\tilde X_{\rho_1}&=\Big\{w:(0,\rho_1)\rightarrow \mathbb R\,\Big|\,\Vert w\Vert_{\tilde X_{\rho_1}}:=\sup_{0\le \rho\le\rho_1}(1+\rho)^{\frac{d}{2}-3}(|w|+\rho|w'|+\rho^2|w''|)<\infty\Big\}\\
\tilde Y_{\rho_1}&=\Big\{w:(0,\rho_1)\rightarrow \mathbb R\,\Big|\,\Vert w\Vert_{\tilde Y_{\rho_1}}:=\sup_{0\le \rho\le\rho_1}(1+\rho)^{\frac{d}{2}-1}|w|<\infty\Big\}.
\end{aligned}
\end{equation}

\begin{proposition}[Interior resolvent]
\label{prop: Interior Resolvent}
There exists a bounded linear operator $\mathcal S: \tilde Y_{\rho_1}\rightarrow \tilde X_{\rho_1}$ such that $\mathcal H_\infty \circ \mathcal S=\id_{\tilde Y_{\rho_1}}$ given by
\begin{equation}
\mathcal S(f)=\Lambda Q\int_0^\rho f\varphi r^{d-1}\,dr-\varphi\int_0^\rho f\Lambda Q r^{d-1}\,dr
\end{equation}
with $\Vert \mathcal S\Vert_{\mathcal L(\tilde Y_{\rho_1},\tilde X_{\rho_1})}\lesssim 1$ for all $\rho_1\ge1$.
\end{proposition}
\begin{proof}
We recall from the previous lemma that $W=-\rho^{1-d}$. Let $R_0>0$ be sufficiently small so that $\Lambda Q>0$ in $[0,R_0]$. Then solving the Wronskian equation, we assume without loss of generality that for $\varphi$,
$$
\varphi=-\Lambda Q\int_\rho^{R_0}\frac{dr}{(\Lambda Q)^2r^{d-1}}.
$$
on $(0,R_0]$ which ensures that as $\rho\to0$,
\begin{equation}
\label{estrhorgigine}
|\varphi|\lesssim \rho^{2-d},\quad   |\varphi'|\lesssim \rho^{1-d}\quad |\varphi''|\lesssim \rho^{-d} .
\end{equation} 
where we have used that $Q$ and hence, $\Lambda Q$ is a smooth radial function. Using the variation of constants
$$
w = \Lambda Q \left(a_1+\int_0^\rho f\varphi r^{d-1}dr\right)+ \varphi\left(a_2-\int_0^\rho f\Lambda Q r^{d-1}dr\right)
$$
solves 
$$
\mathcal H_\infty(w) = f.
$$
In particular, $\mathcal S(f)$ corresponds to the choice $a_1=a_2=0$. Finally, using the estimates \eqref{eq: Asymptotes of Lambda Q and phi}, \eqref{estrhorgigine}, we estimate for $0\leq \rho\leq 1$:
$$
\begin{aligned}
|\mathcal S(f)|=& \left|\Lambda Q \int_0^\rho f\varphi r^{d-1}dr- \varphi\int_0^\rho f\Lambda Q r^{d-1}dr\right|\\
\lesssim &\left(\int_0^\rho rdr+\rho^{2-d}\int_0^\rho r^{d-1}dr \right)\sup_{0\leq \rho\leq 1}|f| \lesssim \Vert f\Vert_{\tilde Y_{\rho_1}}.
\end{aligned}
$$
Similarly, taking derivatives,
$$
\begin{aligned}
|\rho\mathcal S(f)'|=& \rho\left|(\Lambda Q)' \int_0^\rho f\varphi r^{d-1}dr- \varphi'\int_0^\rho f\Lambda Q r^{d-1}dr\right|\\
\lesssim& \left(\rho^2\int_0^\rho rdr+\rho^{2-d}\int_0^\rho r^{d-1}dr \right)\sup_{0\leq r\leq 1}|f| \lesssim \Vert f\Vert_{\tilde Y_{\rho_1}},
\end{aligned}
$$ 
and
$$
\begin{aligned}
|\rho^2\mathcal S(f)''|&= \rho^2\bigg|(\Lambda Q)''\int_0^\rho f\varphi r^{d-1} \, dr -\varphi''\int_0^\rho f\Lambda Q r^{d-1}\,dr-f\bigg|\\
&\lesssim\left(\rho^2\int_0^\rho r\,dr +\rho^{2-d}\int_0^\rho r^{d-1}\,dr +\rho^2\right)\sup_{0\le\rho\le 1}|f|\lesssim \Vert f\Vert_{\tilde Y_{\rho_1}}.
\end{aligned}
$$
For $1\leq \rho \leq \rho_1$,
$$
\begin{aligned}
&(1+\rho)^{\frac{d}{2}-3}|\mathcal S(f)|= (1+\rho)^{\frac{d}{2}-3}\left|\Lambda Q \int_0^\rho f\varphi r^{d-1}dr- \varphi\int_0^\rho f\Lambda Q r^{d-1}dr\right|\\
\lesssim& (1+\rho)^{-2}\int_0^\rho (1+r)^{\frac{d}{2}} |f|dr\lesssim  (1+\rho)^{-2}\int_0^\rho (1+r)\, dr \sup_{0\leq \rho\leq \rho_1}(1+\rho)^{\frac{d}{2}-1}|f|\lesssim\Vert f\Vert_{\tilde Y_{\rho_1}}.
\end{aligned}
$$
Similarly, taking derivatives,
$$
\begin{aligned}
(1+\rho)^{\frac{d}{2}-3}|\rho \mathcal S(f)'|&= (1+\rho)^{\frac{d}{2}-3}\rho\left|(\Lambda Q)' \int_0^\rho f\varphi r^{d-1}dr- \varphi'\int_0^\rho f\Lambda Q r^{d-1}dr\right|\\
&\lesssim  (1+\rho)^{-2}\int_0^\rho (1+r)\, dr \sup_{0\leq \rho\leq \rho_1}(1+\rho)^{\frac{d}{2}-1}|f|\lesssim\Vert f\Vert_{\tilde Y_{\rho_1}}
\end{aligned}
$$ 
and
$$
\begin{aligned}
(1+\rho)^{\frac{d}{2}-3}|\rho^2\mathcal S(f)''|&=(1+\rho)^{\frac{d}{2}-3}\rho^2\bigg|(\Lambda Q)''\int_0^\rho f\varphi r^{d-1} \, dr -\varphi''\int_0^\rho f\Lambda Q r^{d-1}\,dr-f\bigg|\\
&\lesssim (1+\rho)^{-2}\int_0^\rho (1+r)^{\frac{d}{2}}|f|\,dr+(1+\rho)^{\frac{d}{2}-1}|f|\lesssim \Vert f\Vert_{\tilde Y_{\rho_1}}.
\end{aligned}
$$
Thus, $\Vert \mathcal S(f)\Vert_{\tilde X_{\rho_1}}\lesssim \Vert f\Vert _{\tilde Y_{\rho_1}}$.
\end{proof}

\begin{lemma}[Non-linear bounds]
\label{lem: Non-linear Bounds 2}
For $w\in \tilde X_{\rho_1}$ and $\lambda>0$, define
\begin{equation}
\label{eq: F}
F[Q,\lambda]w=\underbrace{p(p-1)\lambda^2 w^2}_{:=\tilde A[\lambda]w}\underbrace{\int_0^1(1-s)(Q+\lambda^2 sw)^{p-2}\,ds}_{:=\tilde B[Q,\lambda]w}- \mathcal F(Q+\lambda^2 w).
\end{equation}
where
$$ 
\mathcal F=\rho^2\frac{d^2}{d\rho^2}+2(1+\alpha)\rho \frac{d}{d\rho}+\alpha(1+\alpha).
$$
Then there exists $C>0$ such that for all $\rho_1\lambda\ll1$ and $\Vert w_1\Vert_{\tilde X_{\rho_1}},\,\Vert w_1\Vert_{\tilde X_{\rho_1}}\le C$,
\begin{equation}
\label{eq: Non-linear Bounds 2}
\Vert F[Q,\lambda] w_1\Vert _{\tilde Y_{\rho_1}}\le C\Vert \mathcal S\Vert_{\mathcal L(\tilde Y_{\rho_1},\tilde X_{\rho_1})}^{-1},\quad \Vert F[Q,\lambda]w_1-F[Q,\lambda]w_2\Vert _{\tilde Y_{\rho_1}}\lesssim \rho_1^2\lambda^2\Vert w_1-w_2\Vert_{\tilde X_{\rho_1}}
\end{equation}
\end{lemma}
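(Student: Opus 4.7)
The plan is to decompose $F[Q,\lambda]w = N_\lambda[w] - \mathcal{F}(Q) - \lambda^2 \mathcal{F}(w)$, where $N_\lambda[w] := p(p-1)\lambda^2 w^2 \int_0^1(1-s)(Q+s\lambda^2 w)^{p-2}\,ds$ is the pure nonlinear piece, and to bound each of the three contributions separately in $\tilde Y_{\rho_1}$. For the source $\mathcal{F}(Q)$, I would first note that $\mathcal{F}$ annihilates $\rho^{-\alpha}$ (this is the algebraic identity $b_\infty^{p-1}=\alpha(d-2-\alpha)$ that forces $u_\infty=b_\infty\rho^{-\alpha}$ to be a leading order solution of \eqref{eq: Self-similar Profile}). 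Combined with the expansion $Q-b_\infty\rho^{-\alpha}=\mathcal{O}_{\rho\rightarrow\infty}(\rho^{1-\frac{d}{2}})$ from \eqref{eq: Soliton} and the smoothness of $Q$ at the origin, this yields $\mathcal{F}(Q)=\mathcal{O}(1)$ near $\rho=0$ and $\mathcal{F}(Q)=\mathcal{O}(\rho^{1-\frac{d}{2}})$ as $\rho\rightarrow\infty$; hence $\Vert \mathcal{F}(Q)\Vert_{\tilde Y_{\rho_1}}\lesssim 1$ uniformly in $\rho_1$, a contribution absorbed by choosing the constant $C$ large enough.

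For the linear correction $\lambda^2\mathcal{F}(w)$ I would unpack the $\tilde X_{\rho_1}$ norm directly: since each of $|w|$, $\rho|w'|$, $\rho^2|w''|$ is dominated by $(1+\rho)^{3-\frac{d}{2}}\Vert w\Vert_{\tilde X_{\rho_1}}$, one has $|\mathcal{F}(w)|\lesssim (1+\rho)^{3-\frac{d}{2}}\Vert w\Vert_{\tilde X_{\rho_1}}$, and multiplication by the weight $(1+\rho)^{\frac{d}{2}-1}$ gives $\Vert \lambda^2\mathcal{F}(w)\Vert_{\tilde Y_{\rho_1}}\lesssim \lambda^2\rho_1^2\Vert w\Vert_{\tilde X_{\rho_1}}$. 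For the nonlinear term, the smallness $\lambda^2|w|\ll Q$ on $[0,\rho_1]$ (valid once $\lambda$ is small enough relative to $\rho_1$) reduces $\tilde B[Q,\lambda]w\lesssim Q^{p-2}$ pointwise, so $|N_\lambda[w]|\lesssim \lambda^2 w^2 Q^{p-2}$. I then split into $\rho\le 1$ (where $Q\sim Q(0)$ is bounded above and below) and $1\le\rho\le\rho_1$ (where $Q\sim \rho^{-\alpha}$), and use the identity $\alpha(p-2)=2-\alpha$ coming from $\alpha=2/(p-1)$; a direct power count then delivers a $\tilde Y_{\rho_1}$ bound of the form $\lambda^2(1+\rho_1)^{\beta}\Vert w\Vert_{\tilde X_{\rho_1}}^2$ for an explicit $\beta$, which is $\le C\Vert \mathcal{S}\Vert^{-1}$ once $\lambda$ is small with respect to $\rho_1$.

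For the Lipschitz bound I would write
$$
N_\lambda[w_1]-N_\lambda[w_2] = \lambda^2 p(p-1)(w_1-w_2)(w_1+w_2)\tilde B[Q,\lambda]w_1 + \lambda^2 p(p-1)w_2^2\bigl(\tilde B[Q,\lambda]w_1-\tilde B[Q,\lambda]w_2\bigr),
$$
and Taylor-expand the difference of $\tilde B$'s exactly as in the proof of Lemma~\ref{lem: Non-linear Bounds}, which produces an extra $\lambda^2 Q^{p-3}(w_1-w_2)$ factor. The same region-by-region bookkeeping as above then bounds the nonlinear difference by $\lambda^2\rho_1^{\beta'}\Vert w_1-w_2\Vert_{\tilde X_{\rho_1}}$ with $\beta'\le 2$. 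The $\mathcal{F}$-difference is purely linear, equal to $\lambda^2\mathcal{F}(w_1-w_2)$, and the estimate from the previous paragraph applied to $w_1-w_2$ gives $\lesssim \lambda^2\rho_1^2\Vert w_1-w_2\Vert_{\tilde X_{\rho_1}}$; this is the dominant contribution and yields the announced Lipschitz constant $\rho_1^2\lambda^2$.

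The main obstacle is the outer region estimate of $N_\lambda[w]$: one has to verify that the combined exponent $(1+\rho)^{\frac{d}{2}-1}\cdot \rho^{6-d-\alpha(p-2)}=(1+\rho)^{\frac{d}{2}-1}\rho^{4-d+\alpha}$ stays under control on $[1,\rho_1]$, and to identify the precise smallness $\lambda\rho_1^{\beta}\ll 1$ required to make both this term and the $\lambda^2\mathcal{F}(w)$ contribution smaller than the target $C\Vert\mathcal{S}\Vert^{-1}$. This is what the hypothesis \textquotedblleft$\lambda\ll \rho_1$\textquotedblright{} in the statement really encodes once the constants are tracked carefully.
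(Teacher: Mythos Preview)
Your proposal is correct and follows the paper's proof essentially step by step: the same three-term decomposition $F=N_\lambda-\mathcal F(Q)-\lambda^2\mathcal F(w)$, the same reduction $\tilde B\lesssim Q^{p-2}$ via $\lambda^2|w|\lesssim Q$ on $[0,\rho_1]$, the same power count (the key being that the outer exponent is $3-s_c<2$ since $s_c>1$, so the $\lambda^2\mathcal F(w)$ term with exponent $2$ dominates), and the same Taylor expansion of $\tilde B$ for the Lipschitz piece. The only cosmetic difference is in the $\mathcal F(Q)$ estimate: the paper substitutes the soliton equation to rewrite $\mathcal F(Q)=(b_\infty^{p-1}-\rho^2Q^{p-1})Q+(3-2s_c)\Lambda Q$ and then quotes the $\Lambda Q$ asymptotics from \eqref{eq: Asymptotes of Lambda Q and phi}, whereas your route via $\mathcal F(\rho^{-\alpha})=0$ requires the derivative bounds $\rho^k\partial_\rho^k(Q-b_\infty\rho^{-\alpha})=\mathcal O(\rho^{1-d/2})$ for $k\le 2$, which are true but not literally contained in \eqref{eq: Soliton}.
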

\begin{proof}
We first bound $\mathcal F(Q)$. In view of \eqref{eq: Soliton},
$$
\rho^2 Q^{p-1}=b_\infty^{p-1}+\mathcal O_{\rho\rightarrow \infty}(\rho^{1-s_c}).
$$
Then in view of \eqref{eq: Asymptotes of Lambda Q and phi}, since $Q''+\frac{d-1}{\rho}Q'+Q^p=0$, we infer
$$
\begin{aligned}
\mathcal F(Q)&=-\rho^2 Q^p+(3-2s_c)\rho \,Q'+\alpha(1+\alpha)Q\\
&=(b_\infty^{p-1}-\rho^2Q^{p-1})Q+(3-2s_c)\Lambda Q=\mathcal O_{\rho\rightarrow \infty}(\rho^{1-\frac{d}{2}}).
\end{aligned}
$$
Note also that since $s_c>1$, we have that for all $0\le \rho\le \rho_1$,
$$
|w_1(\rho)|\lesssim (1+\rho_1)^{3-\frac{d}{2}}\Vert w_1\Vert_{\tilde X_{\rho_1}}\lesssim (1+\rho_1)^2|Q(\rho)|\,\Vert w_1\Vert_{\tilde X_{\rho_1}}
$$
so by our choice of $\lambda$, 
$$
\lambda^2|w_1(\rho)|\lesssim |Q(\rho)|\,\Vert w_1\Vert_{\tilde X_{\rho_1}}.
$$
With these estimates, for all $0\le \rho\le \rho_1$,
$$
\begin{aligned}
&|F[Q,\lambda]w_1|\lesssim \lambda^2|w_1|^2\Big(|Q|+\lambda^2|w_1|\Big)^{p-2}+|\mathcal F(Q)|+\lambda^2|\mathcal F(w_1)|\\
\lesssim &\,\lambda^2(1+\rho)^{6-d-\alpha(p-2)}\Big(\Vert w_1\Vert_{\tilde X_{\rho_1}}^2+\Vert w_1\Vert_{\tilde X_{\rho_1}}^p\Big)+(1+\rho)^{1-\frac{d}{2}}+\lambda^2(1+\rho)^{3-\frac{d}{2}}\Vert w_1\Vert_{\tilde X_{\rho_1}}\\
\lesssim &\,\Big[\rho_1^{3-s_c}\lambda^2\Big(\Vert w_1\Vert _{\tilde X_{\rho_1}}^2+\Vert w_1\Vert _{\tilde X_{\rho_1}}^p\Big)+1+\rho_1^2\lambda^2\Big] (1+\rho)^{1-\frac{d}{2}}\\
\lesssim&\,\Big[1+ \rho_1^2\lambda^2\Big(1+\Vert w_1 \Vert_{\tilde X_{\rho_1}}^p\Big)\Big](1+\rho)^{1-\frac{d}{2}}
\end{aligned}
$$
where we have used that $s_c>1$ in the last inequality. Choose $C>0$ such that 
$$
|F[Q,\lambda]w_1|\le \frac{C}{2}\Vert \mathcal S\Vert_{\mathcal L(\tilde Y_{\rho_1},\tilde X_{\rho_1})}^{-1}\Big[1+\rho_1^2\lambda^2\Big(\Vert w_1\Vert _{\tilde X_{\rho_1}}+\Vert w_1\Vert _{\tilde X_{\rho_1}}^p\Big)\Big] (1+\rho)^{1-\frac{d}{2}}.
$$
Then for $\rho_1\lambda\ll1$ and $\Vert w_1\Vert_{\tilde X_{\rho_1}}\le C$,
$$
| F[Q,\lambda]w_1|\le C\Vert \mathcal S\Vert_{\mathcal L(\tilde Y_{\rho_1},\tilde X_{\rho_1})}^{-1}.
$$
Hence, the first bound in \eqref{eq: Non-linear Bounds 2} holds.
$$
\begin{aligned}
&|F[Q,\lambda]w_1-F[Q,\lambda]w_2|\le |\tilde Aw_1-\tilde Aw_2|\,|\tilde Bw_1|+|\tilde Aw_2|\, |\tilde Bw_1-\tilde Bw_2|+\lambda^2|\mathcal F(w_1-w_2)|\\
&\lesssim \lambda^2|w_1+w_2|\,|w_1-w_2|(|Q|+\lambda^2|w|)^{p-2}+\lambda^4|w_1-w_2|\, |w_2|^2\tilde I_{w_1,w_2}+\lambda^2(1+\rho)^{3-\frac{d}{2}}\Vert w_1-w_2\Vert_{\tilde X_{\rho_1}}
\end{aligned}
$$
where
$$
\begin{aligned}
\tilde I_{w_1,w_2}&:=\bigg|\int_0^1\lambda^{-2}\partial_w \tilde B[Q,\lambda]w|_{w_2+\sigma(w_1-w_2)}\,d\sigma\bigg|\\
&\lesssim \bigg|\int_0^1s(1-s)\int_0^1(Q+s\lambda^2w_2+\sigma s \lambda^2(w_1-w_2))^{p-3}\,d\sigma ds\bigg|\\
&\lesssim \Big[|Q|+\lambda^2(|w_1|+|w_2|)\Big]^{p-3}\lesssim (1+\rho)^{-\alpha(p-3)}.
\end{aligned}
$$
Thus, 
$$
\begin{aligned}
&|F[Q,\lambda]w_1-F[Q,\lambda]w_2|\\
\lesssim&\Big[\lambda^2 (1+\rho)^{6-d-(p-2)\alpha}+\lambda^4(1+\rho)^{9-\frac{3d}{2}-(p-3)\alpha}+\lambda^2(1+\rho)^{3-\frac{d}{2}}\Big]\Vert w_1-w_2\Vert_{\tilde X_{\rho_1}}\\
\lesssim&\Big(\rho_1^{3-s_c}\lambda^2+\rho_1^{6-2s_c}\lambda^4+\rho_1^2\lambda^2\Big)(1+\rho)^{1-\frac{d}{2}}\Vert w_1-w_2\Vert_{\tilde X_{\rho_1}}\lesssim\rho_1^2 \lambda^2(1+\rho)^{1-\frac{d}{2}}\Vert w_1-w_2\Vert_{\tilde X_{\rho_1}}
\end{aligned}
$$
where again, we have used that $s_c>1$. Hence the second bound in \eqref{eq: Non-linear Bounds 2} holds.
\end{proof}

We prove the existence of a one-parameter family of smooth solutions to \eqref{eq: Self-similar Profile} in the region $\rho<\rho_0$.\\

\begin{proposition}[Interior solutions]
\label{prop: Interior Solutions}
For all $0\le\rho_0\ll 1$, $0<\lambda\le \rho_0$, there exists a solution to \eqref{eq: Self-similar Profile} on $0\le \rho\le \rho_0$ of the form 
$$
u=\lambda^{-\alpha}(Q+\lambda^2w)\bigg(\frac{\rho}{\lambda}\bigg)
$$
with $\Vert w\Vert_{\tilde X_{\rho_1}}\lesssim 1$ where $\rho_1=\frac{\rho_0}{\lambda}\ge1$.
\end{proposition}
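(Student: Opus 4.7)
The plan is to mirror the fixed-point argument used in Proposition \ref{prop: Exterior Solutions}, now built around the rescaled soliton in the inner region. First I substitute the ansatz $u(\rho)=\lambda^{-\alpha}(Q+\lambda^{2}w)(\rho/\lambda)$ into \eqref{eq: Self-similar Profile} and pass to the inner variable $y=\rho/\lambda\in(0,\rho_{1})$. The scaling relation $\alpha p=\alpha+2$ makes the overall powers of $\lambda$ collapse, the soliton equation $Q''+\tfrac{d-1}{y}Q'+Q^{p}=0$ cancels the $Q^{p}$ contribution, and the Taylor expansion
\[
(Q+\lambda^{2}w)^{p}-Q^{p}=pQ^{p-1}\lambda^{2}w+p(p-1)\lambda^{4}w^{2}\!\int_{0}^{1}(1-s)(Q+s\lambda^{2}w)^{p-2}\,ds
\]
isolates the linear piece $pQ^{p-1}\lambda^{2}w$, which combines with $w''+\tfrac{d-1}{y}w'$ to yield $-\lambda^{2}\mathcal{H}_{\infty}(w)$. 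The remaining $\lambda^{2}$ terms (coming from $-\rho^{2}\partial_{\rho}^{2}$, $-2(1+\alpha)\rho\partial_{\rho}$, and $-\alpha(1+\alpha)$ acting on $u$) regroup exactly into $\lambda^{2}\mathcal{F}(Q+\lambda^{2}w)$. Dividing by $\lambda^{2}$ one obtains the equivalent equation $\mathcal{H}_{\infty}(w)=F[Q,\lambda]w$ on $(0,\rho_{1})$.

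Next I apply the interior resolvent $\mathcal{S}$ of Proposition \ref{prop: Interior Resolvent} to reduce the problem to the fixed-point equation
\[
w=\mathcal{S}\circ F[Q,\lambda]w
\]
in $\tilde X_{\rho_{1}}$. Lemma \ref{lem: Non-linear Bounds 2} is tailored precisely for this: the first estimate, together with $\Vert\mathcal{S}\Vert_{\mathcal{L}(\tilde Y_{\rho_{1}},\tilde X_{\rho_{1}})}\lesssim 1$, shows that $\mathcal{S}\circ F[Q,\lambda]$ sends the ball $B_{\tilde X_{\rho_{1}}}(0,C)$ into itself, while the second gives a contraction constant $\lesssim \rho_{1}^{2}\lambda^{2}=\rho_{0}^{2}\ll 1$ under the hypothesis $\rho_{0}\ll 1$. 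The Banach fixed point theorem then produces $w\in \tilde X_{\rho_{1}}$ with $\Vert w\Vert_{\tilde X_{\rho_{1}}}\lesssim 1$.

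Finally I verify that $u$ gives a smooth solution of \eqref{eq: Self-similar Profile}. Positivity of $Q+\lambda^{2}w$ on $[0,\rho_{1}]$, needed so that $|u|^{p-1}u=u^{p}$, follows from $Q(y)\gtrsim (1+y)^{-\alpha}$ together with $\lambda^{2}|w(y)|\lesssim \lambda^{2}(1+y)^{3-d/2}\lesssim \rho_{0}^{2}(1+y)^{-\alpha}$, where the last step uses $d/2=s_{c}+\alpha$ and $\lambda\rho_{1}=\rho_{0}$. Smoothness away from $y=0$ is immediate from the integral formula for $\mathcal{S}$, and regularity at the origin is obtained by repeating the Proposition \ref{prop: Singular ODE} bootstrap from the end of Proposition \ref{prop: Exterior Solutions}, this time at the singular point $y=0$ rather than $\rho=1$. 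I expect the main obstacle to be the algebraic bookkeeping in the substitution, specifically checking that every term involving $\lambda$ regroups exactly into $-\lambda^{2}\mathcal{H}_{\infty}(w)+\lambda^{2}F[Q,\lambda]w$ as in Lemma \ref{lem: Non-linear Bounds 2}; once that identification is made, the fixed-point step is a direct invocation of the already established resolvent and nonlinear bounds.
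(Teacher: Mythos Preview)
Your proposal is correct and follows essentially the same approach as the paper: substitute the ansatz to reduce to $\mathcal H_\infty(w)=F[Q,\lambda]w$, then apply the Banach fixed point theorem to $\mathcal S\circ F[Q,\lambda]$ on $B_{\tilde X_{\rho_1}}(C)$ using Proposition~\ref{prop: Interior Resolvent} and Lemma~\ref{lem: Non-linear Bounds 2}. Your additional checks of positivity and regularity at the origin go beyond what the paper's brief proof records, but they are sound and do no harm.
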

\begin{proof}
$u=\lambda^{-\alpha}(Q+\lambda^2w)(\frac{\rho}{\lambda})$ solves \eqref{eq: Self-similar Profile} if and only if
\begin{equation}
\label{eq: Fixed Point 2}
\mathcal H_\infty(w)=\lambda^{-2}\Big[(Q+\lambda^2w)^p-Q^p-pQ^{p-1}\lambda^2w\Big]-\mathcal F(Q+\lambda^2w)=F[Q,\lambda]w.
\end{equation}
Lemma \ref{lem: Non-linear Bounds 2} together with Proposition \ref{prop: Interior Resolvent} states precisely that for $\rho_1\lambda=\rho_0\ll1$, 
$$
\mathcal S\circ F[Q,\lambda]:B_{\tilde X_{\rho_1}}(C):=\{w\in \tilde X_{\rho_1}\,|\,\Vert w\Vert _{\tilde X_{\rho_1}}\le C\}\rightarrow B_{\tilde X_{\rho_1}}(C)
$$
is a contraction map. Thus, Banach fixed point theorem applies and yields a unique solution $w$ to \eqref{eq: Fixed Point 2} with $\Vert w \Vert _{\tilde X_{\rho_1}}\le C$.
\end{proof}

\section{The matching}

We are now in position to ``glue" inner and outer solutions to produce exact solutions to \eqref{eq: Nonlinear Wave}.

\begin{proposition}[Existence of a countable number of smooth self-similar profiles]
\label{prop: Countable Solutions}
There exists $N\in\mathbb N$ such that for all $n\ge N$, there exists a smooth solution $u _n$ to \eqref{eq: Nonlinear Wave} such that $\Lambda u_n $ vanishes exactly $n$ times.
\end{proposition}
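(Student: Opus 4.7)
The plan is to produce self-similar profiles by matching the exterior family of Proposition \ref{prop: Exterior Solutions} with the interior family of Proposition \ref{prop: Interior Solutions} at a common point $\rho_0\ll 1$, exploiting the oscillatory asymptotics of $\psi_1$ near the origin and of $Q-u_\infty$ at infinity (both driven by the same exponent $\omega$) to obtain infinitely many matches. Let $u^{\mathrm{ext}}_\varepsilon = u_\infty + \varepsilon(\psi_1+w^{\mathrm{ext}})$ and $u^{\mathrm{int}}_\lambda(\rho) = \lambda^{-\alpha}(Q+\lambda^2 w^{\mathrm{int}})(\rho/\lambda)$. I would seek $(\varepsilon,\lambda)$ with $0<\lambda\le\rho_0$ and $0<\varepsilon\ll\rho_0^{s_c-1}$ such that
$$
\Delta(\varepsilon,\lambda):=\bigl(u^{\mathrm{ext}}_\varepsilon - u^{\mathrm{int}}_\lambda,\;(u^{\mathrm{ext}}_\varepsilon)' - (u^{\mathrm{int}}_\lambda)'\bigr)\big|_{\rho=\rho_0}=0\in\mathbb R^2.
$$
Once $\Delta=0$, the resulting $C^1$ self-similar profile on $(0,\infty)$ is bootstrapped to smoothness across $\rho=1$ via Proposition \ref{prop: Singular ODE}, exactly as in the proof of Proposition \ref{prop: Exterior Solutions}.

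The next step is to extract the leading behaviour of $\Delta$. On the exterior side, \eqref{eq: Asymptote of psi_1 and psi_2} combined with \eqref{eq: Bounds on w} gives
$$
u^{\mathrm{ext}}_\varepsilon(\rho_0) - u_\infty(\rho_0) = \varepsilon\rho_0^{1-\frac{d}{2}}\bigl[A_1\sin(\omega\log\rho_0+\delta_1) + \mathcal O(\rho_0^2+\varepsilon\rho_0^{1-s_c})\bigr].
$$
On the interior side, the oscillatory tail of $Q$ is controlled by the asymptotic form of the linearisation $-\Delta-pb_\infty^{p-1}\rho^{-2}$ with fundamental solutions $\rho^{1-d/2\pm i\omega}$, yielding $(Q-u_\infty)(y) = y^{1-d/2}[\alpha_Q\cos(\omega\log y)+\beta_Q\sin(\omega\log y)] + \mathcal O(y^{2-d+\alpha})$. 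After rescaling and using $\Vert w^{\mathrm{int}}\Vert_{\tilde X_{\rho_1}}\lesssim 1$, with $\tau:=\omega\log(\rho_0/\lambda)$,
$$
u^{\mathrm{int}}_\lambda(\rho_0) - u_\infty(\rho_0) = \lambda^{s_c-1}\rho_0^{1-\frac{d}{2}}\bigl[\alpha_Q\cos\tau+\beta_Q\sin\tau + o(1)\bigr],
$$
with the analogous identity for the derivatives. Setting $\tilde\varepsilon:=\varepsilon\lambda^{1-s_c}$, the leading-order system $\Delta=0$ collapses to $\tilde\varepsilon\,\vec{\mathcal M}(\rho_0) = \vec{\mathcal V}(\tau) + o(1)$, where $\vec{\mathcal M}(\rho_0)\in\mathbb R^2\setminus\{0\}$ is an explicit constant vector and $\vec{\mathcal V}:\mathbb R\to\mathbb R^2$ is a nontrivial $\frac{2\pi}{\omega}$-periodic function of $\tau$.

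Projecting onto $\mathbb R\vec{\mathcal M}$ and its orthogonal complement, this reduces to a scalar equation $\tilde\varepsilon = h(\tau) + o(1)$ together with a scalar constraint $G_0(\tau) + o(1) = 0$. Since $G_0$ is a nontrivial $\frac{2\pi}{\omega}$-periodic trigonometric polynomial in $\tau$, it admits a sequence of nondegenerate zeros $\tau_n\to\infty$ with $G_0'(\tau_n)\neq 0$. For $n$ large, the implicit function theorem applied to the full nonlinear equation $\Delta(\varepsilon,\lambda)=0$ yields a unique nearby solution $(\varepsilon_n,\lambda_n)$ with $\lambda_n = \rho_0 e^{-\tau_n/\omega}\to 0$ and $\varepsilon_n\sim\lambda_n^{s_c-1}$. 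The main technical obstacle lies precisely here: one must verify that \emph{all} correction terms (from $w^{\mathrm{ext}}$, $w^{\mathrm{int}}$, and the subleading tails of $\psi_1$ and of $Q-u_\infty$) are $o(1)$ uniformly in $n$, both in value and in the derivatives with respect to $(\varepsilon,\lambda)$. I would achieve this by differentiating the fixed-point relations \eqref{eq: Fixed Point} and \eqref{eq: Fixed Point 2} in the parameters and propagating the resolvent estimates of Propositions \ref{prop: Exterior Resolvent} and \ref{prop: Interior Resolvent}, mirroring the control of $\partial_\varepsilon w^{\mathrm{ext}}|_{\varepsilon=0}$ already established in Proposition \ref{prop: Exterior Solutions}.

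It remains to count the zeros of $\Lambda u_n$. Since $u_\infty$ is homogeneous of degree $-\alpha$, $\Lambda u_\infty\equiv 0$, so on $(\rho_0,\infty)$ we have $\Lambda u_n = \varepsilon_n(\Lambda\psi_1+\Lambda w^{\mathrm{ext}})$; by \eqref{eq: Refined Control of psi_1} and the control of $\Lambda w^{\mathrm{ext}}$ from Proposition \ref{prop: Exterior Solutions}, this produces only $\mathcal O(1)$ zeros independent of $n$. On $(0,\rho_0)$, $\Lambda u_n(\rho) = \lambda_n^{-\alpha}\Lambda(Q+\lambda_n^2 w^{\mathrm{int}})(\rho/\lambda_n)$, so the zero count equals that of $\Lambda(Q+\lambda_n^2 w^{\mathrm{int}})$ on $(0,\rho_0/\lambda_n)$. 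By \eqref{eq: Asymptotes of Lambda Q and phi}, $\Lambda Q$ has $\frac{\omega}{\pi}\log(\rho_0/\lambda_n)+\mathcal O(1)\sim \tau_n/\pi$ simple zeros in $(1,\rho_0/\lambda_n)$, which are stable under the $\mathcal O(\lambda_n^2)$ perturbation in the $C^1$ topology away from the zeros. Consequently the zero count of $\Lambda u_n$ is monotone in $n$ and unbounded, and after relabelling the matching sequence by this count one obtains a solution realising every integer $n\ge N$ for some $N\in\mathbb N$.
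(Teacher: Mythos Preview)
Your proposal is correct in its overall architecture and reaches the same conclusion, but the matching mechanism differs from the paper's in a way worth noting.

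You attack the two-dimensional system $\Delta(\varepsilon,\lambda)=0$ simultaneously: after rescaling to $(\tilde\varepsilon,\tau)$ you project onto $\vec{\mathcal M}$ and its orthogonal complement and then invoke the implicit function theorem on the full $2\times 2$ system near each nondegenerate zero $\tau_n$ of $G_0$. This forces you to control the corrections $w^{\mathrm{ext}},w^{\mathrm{int}}$ not only in $C^0$ but in $C^1$ \emph{with respect to the parameters} $(\varepsilon,\lambda)$, as you yourself flag; in particular you must differentiate the interior fixed-point \eqref{eq: Fixed Point 2} in $\lambda$, which the paper never does.

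The paper instead proceeds sequentially: it applies the implicit function theorem only to the scalar value-matching equation $\mathcal I[\rho_0](\varepsilon,\lambda)=0$ near $(0,0)$ (after the regularising substitution $\mu=\lambda^{s_c-1}$) to obtain a curve $\varepsilon=\varepsilon(\lambda)$, then substitutes into the derivative-matching function $\lambda\mapsto \mathcal I'[\rho_0](\lambda)$ and shows by an explicit computation that its leading part is proportional to $\sin(-\omega\log\lambda+\delta_2-\delta_1)$, so it changes sign on each half-period. The countable family of matches then comes from the \emph{intermediate value theorem}, which needs only continuity in $\lambda$ and hence no parameter-differentiation of $w^{\mathrm{int}}$. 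Your route is more symmetric and would additionally give local uniqueness of each $u_n$, while the paper's route is more economical in the estimates it demands. The zero-counting sketches are essentially the same in both; the paper likewise only outlines this step and refers to \cite{6} for details.
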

\begin{proof}
\noindent{\bf step 1} (Matching): Recall that
\begin{equation}
\label{eq: Asymptote of psi_1}
\begin{aligned}
\psi_1&= c_1\rho^{1-\frac{d}{2}}\sin(\omega\log\rho+\delta_1)+\mathcal O_{\rho\rightarrow 0}(\rho^{3-\frac{d}{2}})\\
\Lambda \psi_1&=c_1 \rho^{1-\frac{d}{2}}\Big[(1-s_c)\sin(\omega\log\rho+\delta_1)+\omega\cos(\omega\log\rho+\delta_1)\Big]+\mathcal O_{\rho\rightarrow 0}(\rho^{3-\frac{d}{2}}),
\end{aligned}
\end{equation}
for some $c_1\in\mathbb R$. Then, we can choose $0<\rho_0\ll 1$ such that
\begin{equation}
\label{intiialirnot}
\psi_1(\rho_0)=c_1\rho_0^{1-\frac{d}{2}}+\mathcal O_{\rho\rightarrow 0}(\rho^{3-\frac{d}{2}}),\quad \Lambda\psi_1(\rho_0)=c_1(1-s_c)\rho_0^{1-\frac{d}{2}}+\mathcal O_{\rho\rightarrow 0}(\rho^{3-\frac{d}{2}}),
\end{equation}
and Proposition \ref{prop: Exterior Solutions} and Proposition \ref{prop: Interior Solutions} apply. In particular, let 
$$
\begin{aligned}
u_{\exterior}[\varepsilon] &=u_\infty +\varepsilon \psi_1+\varepsilon w_{\exterior}\\
u_{\interior}[\lambda] &=\lambda^{-\alpha}(Q+\lambda^2 w_{\interior})\bigg(\frac{\rho}{\lambda}\bigg)
\end{aligned}
$$ 
be solutions to \eqref{eq: Self-similar Profile} in the regions $[\rho_0,\infty)$ and $[0,\rho_0]$ respectively. Define
$$
\mathcal I[\rho_0](\varepsilon,\lambda)=u_{\exterior}[\varepsilon](\rho_0)-u_{\interior}[\lambda](\rho_0).
$$
Then
$$
\partial_\varepsilon \mathcal I[\rho_0](\varepsilon,\lambda)=\partial_\varepsilon u_{\exterior}[\varepsilon](\rho_0)=\psi_1(\rho_0)+w_{\exterior}(\rho_0)+\varepsilon\partial_\varepsilon w(\rho_0).
$$
In view of Proposition \ref{prop: Exterior Solutions}, since $\psi_1(\rho_0)\neq 0$,
$$
\partial_\varepsilon \mathcal I[\rho_0](0,0)=\psi_1(\rho_0)\neq 0.
$$
From the asymptotic behaviour of $Q$ as $\rho\rightarrow\infty$, as $\lambda\rightarrow 0$,
$$
\begin{aligned}
\bigg|\lambda^{-\alpha}(Q-u_\infty+\lambda^2 w_{\interior})\bigg(\frac{\rho_0}{\lambda}\bigg)\bigg|&\lesssim\lambda^{-\alpha}\bigg[\bigg(\frac{\rho_0}{\lambda}\bigg)^{1-\frac{d}{2}}+\lambda^2\bigg(\frac{\rho_0}{\lambda}\bigg)^{3-\frac{d}{2}}\bigg]
&\lesssim \lambda^{s_c-1}\rho_0^{1-\frac{d}{2}}(1+\rho_0^2)\rightarrow 0
\end{aligned}
$$
Since $u_{\exterior}[0]=u_\infty$ is self-similar, this implies
$$
\mathcal I[\rho_0](0,0)=u_\infty (\rho_0)-\lim_{\lambda\rightarrow 0} \lambda^{-\alpha}u_\infty\bigg(\frac{\rho_0}{\lambda}\bigg)=0.
$$
Applying the implicit function theorem to 
$$
\tilde{\mathcal I}(\varepsilon, \mu):=\mathcal I[\rho_0](\varepsilon,\mu^{\frac{1}{s_c-1}})
$$
which is $C^1$, there exists $\lambda_0>0$ and $\tilde \varepsilon \in C^1([0,\lambda_0^{s_c-1}))$ such that $\tilde{\mathcal I} (\tilde\varepsilon(\mu),\mu)=0$. Then, for $\varepsilon(\lambda):=\tilde \varepsilon (\lambda^{s_c-1})$, we have $\mathcal I[\rho_0](\varepsilon(\lambda),\lambda)=0$ and $\varepsilon\in C^{s_c-1}([0,\lambda_0))$. Hence, 
$$
u_{\exterior}[\varepsilon(\lambda)](\rho_0)=u_{\interior}[\lambda](\rho_0)$$
on $[0,\lambda_0)$ i.e.
\begin{equation}
\label{eq: Matching}
\varepsilon(\lambda)(\psi_1(\rho_0)+w_{\exterior}(\rho_0))=\lambda^{-\alpha}(Q-u_\infty+\lambda^2w_{\interior})\bigg(\frac{\rho_0}{\lambda}\bigg).
\end{equation}
By the definition of $\rho_0$ and from the bounds on $w_{\exterior}$ and $w_{\interior}$ in Propositions \ref{prop: Exterior Solutions} and \ref{prop: Interior Solutions}, we infer for some $c\in \mathbb R$,
$$
\begin{aligned}
&\varepsilon(\lambda)\rho_0^{1-\frac{d}{2}}\Big[c+\mathcal O(\rho_0^2+\varepsilon(\lambda)\rho_0^{s_c-1})\Big]= \varepsilon(\lambda)(\psi_1(\rho_0)+w_{\exterior}(\rho_0))\\
=&\lambda^{-\alpha}(Q-u_\infty+\lambda^2w_{\interior})\bigg(\frac{\rho_0}{\lambda}\bigg) \lesssim \lambda^{s_c-1}\rho_0^{1-\frac{d}{2}}\Big[1+\mathcal O(\rho_0^2)\Big]
\end{aligned}
$$
as $\rho_0\rightarrow 0$, so as $\lambda\rightarrow 0$,
$$
|\varepsilon (\lambda)|\lesssim \lambda^{s_c-1}.
$$
It then follows from \eqref{eq: Matching} and \eqref{eq: Bounds on w} that 
\begin{equation}
\label{eq: epsilon}
\varepsilon (\lambda) = \psi_1^{-1}(\rho_0)\lambda^{-\alpha}(Q-u_\infty )\bigg(\frac{\rho_0}{\lambda}\bigg)+\mathcal O\Big(\lambda^{s_c-1}(\rho_0^2+\lambda^{s_c-1}\rho_0^{1-s_c})\Big).
\end{equation}
Consider now the spatial derivative
$$
\mathcal I'[\rho_0](\varepsilon(\lambda),\lambda)= \varepsilon(\lambda)(\psi_1'(\rho_0)+w_{\exterior}'(\rho_0))-\lambda^{-1-\alpha}(Q'-u_\infty'+\lambda^2w_{\interior}')\bigg(\frac{\rho_0}{\lambda}\bigg).
$$
From the bound on $\varepsilon(\lambda)$ above and the bound on $w_{\exterior}'$ and $w_{\interior}'$ in Propositions \ref{prop: Exterior Solutions} and \ref{prop: Interior Solutions}, we infer
$$
\begin{aligned}
&\mathcal I'[\rho_0](\varepsilon(\lambda),\lambda)=\varepsilon(\lambda)\psi_1'(\rho_0)-\lambda^{-1-\alpha}(Q'-u_\infty')\bigg(\frac{\rho_0}{\lambda}\bigg)+\mathcal O\Big(\lambda^{s_c-1}(\rho_0^{2-\frac{d}{2}}+\lambda^{s_c-1}\rho_0^{1-d+\alpha})\Big)\\
=&\frac{\lambda^{s_c-1}}{\rho_0^{\frac{d}{2}-1}\psi_1(\rho_0)}\bigg[\bigg(\frac{\rho_0}{\lambda}\bigg)^{\frac{d}{2}-1}(Q-u_\infty)\bigg(\frac{\rho_0}{\lambda}\bigg)\psi_1'(\rho_0)-\bigg(\frac{\rho_0}{\lambda}\bigg)^{\frac{d}{2}}(Q'-u_\infty')\bigg(\frac{\rho_0}{\lambda}\bigg)\frac{\psi_1(\rho_0)}{\rho_0}\bigg]\\
+&\ \mathcal O\Big(\lambda^{s_c-1}(\rho_0^{2-\frac{d}{2}}+\lambda^{s_c-1}\rho_0^{1-d+\alpha})\Big)
\end{aligned}
$$
where in the final inequality we inject \eqref{eq: epsilon} for $\varepsilon(\lambda)$. From the asymptotic behaviours \eqref{eq: Asymptote of psi_1} for $\psi_1$ and knowing that
\begin{equation}
\label{eq: Asymptote of Q}
\begin{aligned}
(Q-u_\infty )(\rho)&=c_2 \rho^{1-\frac{d}{2}}\sin (\omega\log \rho+\delta_2)+\mathcal O_{\rho\rightarrow\infty }(\rho^{2-d+\alpha}),\\
(Q'-u_\infty ')(\rho)&=c_2 \rho^{-\frac{d}{2}}\Big[(1-\tfrac{d}{2})\sin(\omega\log\rho+\delta_2)+\omega\cos (\omega\log\rho+\delta_2)\Big]+\mathcal O_{\rho\rightarrow\infty}(\rho^{1-d+\alpha}),
\end{aligned}
\end{equation}
for some $c_2\in\mathbb R$, it follows that 
$$
\begin{aligned}
&\frac{\rho_0^{\frac{d}{2}-1}\psi_1(\rho_0)}{\lambda^{s_c-1}}\mathcal I'[\rho_0](\varepsilon(\lambda),\lambda)=c_1c_2\,\omega \rho_0^{-\frac{d}{2}}\Big[\sin(\omega\log \rho_0-\omega\log\lambda +\delta_2)\cos(\omega\log\rho_0+\delta_1)\\
&-\cos(\omega\log\rho_0-\omega\log\lambda+\delta_2)\sin (\omega\log\rho_0+\delta_1)\Big]+\mathcal O\Big(\rho_0^{2-\frac{d}{2}}+\lambda^{s_c-1}\rho_0^{1-d+\alpha}\Big)\\
&=c_1c_2\,\omega \rho_0^{-\frac{d}{2}}\sin(-\omega\log\lambda+\delta_2-\delta_1)+\mathcal O\Big(\rho_0^{2-\frac{d}{2}}+\lambda^{s_c-1}\rho_0^{1-d+\alpha}\Big).
\end{aligned}
$$
Thus,
\begin{equation}
\mathcal I'[\rho_0](\varepsilon(\lambda),\lambda)=c_1c_2\,\omega\lambda^{s_c-1}\bigg[\frac{\sin(-\omega\log\lambda+\delta_2-\delta_1)}{\rho_0^{d-1}\psi_1(\rho_0)}+\mathcal O\Big(\rho_0^{2-\frac{d}{2}}+\lambda^{s_c-1}\rho_0^{1-d+\alpha}\Big)\bigg].
\end{equation}
Let
\begin{equation}
\label{eq:definitionoflambdakpm}
\lambda_{n,+}=\exp\bigg[\frac{-n\pi+\delta_2-\delta_1+\delta_0}{\omega}\bigg],\quad \lambda_{n,-}=\exp\bigg[\frac{-n\pi+\delta_2-\delta_1-\delta_0}{\omega}\bigg].
\end{equation}
Then, $\lambda_{n,\pm}\rightarrow 0$ as $n\rightarrow\infty$ and
$$
0<\cdots <\lambda_{n,+}<\lambda_{n,-}<\lambda_{n-1,+}<\lambda_{n-1,-}<\cdots.
$$
Then,
$$
\mathcal I'[\rho_0](\varepsilon(\lambda_{n,\pm}),\lambda_{n,\pm})= \pm(-1)^n \lambda_{n,\pm}^{s_c-1}\bigg[\frac{c_1c_2\,\omega}{\rho_0^{d-1}\psi_1(\rho_0)}\sin \delta_0+\mathcal O\Big(\rho_0^{2-\frac{d}{2}}+\lambda_{n,\pm}^{s_c-1}\rho_0^{1-d+\alpha}\Big)\bigg]
$$
For $\rho_0\ll 1$, and $n\gg 1$,
$$
\mathcal I'[\rho_0](\varepsilon(\lambda_{n,\pm}),\lambda_{n,-})\mathcal I'[\rho_0](\varepsilon(\lambda_{n,\pm}),\lambda_{n,+})<0.
$$
Since $\lambda\mapsto \mathcal I'[\rho_0](\varepsilon(\lambda),\lambda)$ is continuous, it follows from intermediate value theorem that for all $n\ge N\gg1$, there exists $\lambda_{n,+}<\mu_n<\lambda_{n,-}$ such that $\mathcal I'[\rho_0](\varepsilon(\mu_n),\mu_n)=0$ i.e.
$$
u_{\exterior}[\varepsilon (\mu_n)](\rho_0)=u_{\interior}[\mu_n](\rho_0),\quad u_{\exterior}'[\varepsilon (\mu_n)](\rho_0)=u_{\interior}'[\mu_n](\rho_0).
$$
Hence, the function
$$
u_n(\rho):=\begin{cases} 
u_{\interior}[\mu_n](\rho) & 0\le \rho\le \rho_0,\\
u_{\exterior}[\varepsilon(\mu_n)](\rho) & \rho_0\le \rho
\end{cases}
$$
is a smooth solution to \eqref{eq: Self-similar Profile} in $[0,\infty)$ for all $n\ge N$. \\

\noindent{\bf step 2} (Counting the zeroes): The remaining part of the proof is devoted to counting the number of zeroes of $\Lambda u_n$. We first claim that for $\rho_0\ll1$,
\begin{equation}
\label{cenjnenoeo}
\Lambda u_{\exterior}[\varepsilon]\ \ \mbox{has as many zeros as}\ \ \Lambda\psi_1 \ \ \mbox{on}\ \ \rho\geq \rho_0.
\end{equation}
Indeed, $\Lambda \psi_1+\Lambda w_{\exterior}$ does not vanish on $[R_0,\infty)$ for $R_0$ large enough from \eqref{eq: Asymptote of Lambda psi_1} and the uniform bound \eqref{eq: Bounds on w}. Moreover, $\Lambda\psi_1(\rho_0)\neq 0$ from the normalization \eqref{intiialirnot}, and the absolute value of the derivative of $\Lambda\psi_1$ at any of its zeroes is uniformly lower bounded using \eqref{eq: Asymptote of psi_1 and psi_2} and hence the uniform smallness  \eqref{eq: Bounds on w} yields the claim.\\\\
We now claim that for $\rho_0\ll1$,
\begin{equation}
\label{innerzeroes}
\Lambda u_{\interior}[\mu_n]\ \ \mbox{has as many zeros as}\ \  \Lambda Q \ \ \mbox{on}\ \  0\leq r\leq \frac{\rho_0}{\mu_n}.
\end{equation}
Indeed, recall that
$$
\Lambda u_{\interior}[\mu_n](\rho) =\mu_n^{-\alpha}(\Lambda Q+\mu_n^2 \Lambda w_{\interior})\left(\frac{\rho}{\mu_n}\right).
$$
We now claim 
\begin{equation}
\label{tobeprovedlambdaq}
\left(\frac{\rho_0}{\mu_n}\right)^{\frac{d}{2}-1} \left|\Lambda Q\left(\frac{\rho_0}{\mu_n}\right)\right|\gtrsim 1.
\end{equation}
Assume \eqref{tobeprovedlambdaq}, then since the zeros of $\Lambda Q$ are simple and since
$$\|\Lambda w_{\interior}\|_{\tilde X_{\frac{\rho_0}{\mu_n}}}=\sup_{0\leq \rho\leq \frac{\rho_0}{\mu_n}}(1+\rho)^{\frac{d}{2}-3}|\Lambda w_{\interior}|\lesssim 1$$
so that
$$\sup_{0\leq \rho\leq \frac{\rho_0}{\mu_n}}(1+\rho)^{\frac{d}{2}-1}|\mu_n^2\Lambda w_{\interior}|\lesssim \rho_0^2,$$
and similarily for $\Lambda^2 w_{\interior}$, and since
$$
\Lambda Q(0) = \frac{2}{p-1}\neq 0,
$$
we conclude for $\rho_0\ll1$ that $\Lambda Q+\mu_n^2\Lambda w_{\interior}$ has as many zeros as $\Lambda Q$ on $0\leq \rho\leq \frac{\rho_0}{\mu_n}$. We deduce that on $0\leq \rho\leq \rho_0$, $\Lambda u_{\interior}[\mu_n]$ has as many zeros as $\Lambda Q$ on $0\leq \rho\leq \frac{\rho_0}{\mu_n}$.\\

\noindent{\it Proof of \eqref{tobeprovedlambdaq}}: Recall that
$$
u_{\exterior}[\varepsilon(\mu_n)](\rho_0) = u_{\interior}[\mu_n](\rho_0),\quad u_{\exterior}[\varepsilon(\mu_n)]'(\rho_0) = u_{\interior}[\mu_n]'(\rho_0),
$$
which implies
$$\Lambda u_{\exterior}[\varepsilon(\mu_n)](\rho_0) = \Lambda u_{\interior}[\mu_n](\rho_0).$$
This yields using \eqref{eq: epsilon}:
$$
\frac{\varepsilon(\mu_n)}{\mu_n^{s_c-1}} = \frac{1}{\psi_1(\rho_0)\mu_n^{\frac{d}{2}-1}}(Q-u_\infty)\left(\frac{\rho_0}{\mu_n}\right) + O\Big(\mu_n^{s_c-1}\rho_0^{s_c-1}+\rho_0^2\Big)
$$
and taking $\Lambda$ of \eqref{eq: Matching}:
$$
\frac{\varepsilon(\mu_n)}{\mu_n^{s_c-1}} = \frac{1}{\Lambda\psi_1(\rho_0)\mu_n^{\frac{d}{2}-1}}\Lambda Q\left(\frac{\rho_0}{\mu_n}\right) + O\Big(\mu_n^{s_c-1} \rho_0^{s_c-1}+\rho_0^2\Big).
$$
We infer
$$
 \frac{1}{\psi_1(\rho_0)\mu_n^{\frac{d}{2}-1}}(Q-u_\infty)\left(\frac{\rho_0}{\mu_n}\right) = \frac{1}{\Lambda\psi_1(\rho_0)\mu_n^{\frac{d}{2}-1}}\Lambda Q\left(\frac{\rho_0}{\mu_n}\right)+ O\Big(\mu_n^{s_c-1} \rho_0^{s_c-1}+\rho_0^2\Big).
$$
In view of the asymptote \eqref{intiialirnot} of $\psi_1$, we infer
\begin{equation}
\label{eq:usefulboundtocountnumberofzeros}
 \left|\left(\frac{\rho_0}{\mu_n}\right)^{\frac{d}{2}-1}(Q-u_\infty)\left(\frac{\rho_0}{\mu_n}\right)\right| \leq \frac{2}{s_c-1}\left|\left(\frac{\rho_0}{\mu_n}\right)^{\frac{d}{2}-1}\Lambda Q\left(\frac{\rho_0}{\mu_n}\right)\right|+ O\Big(\mu_n^{s_c-1}+\rho_0^2\Big).
\end{equation}
On the other hand, from \eqref{eq: Asymptote of Q},
\begin{equation}
\label{eq:equationwithtwostars}
\begin{aligned}
\Lambda Q(\rho)&=\frac{c_2}{ \rho^{\frac{d}{2}-1}}\Big[(1-s_c)\sin(\omega\log\rho+\delta_2)+\omega\cos (\omega\log\rho+\delta_2)\Big]+\mathcal O_{\rho\rightarrow\infty}(\rho^{2-d+\alpha})\\
&=\frac{c_2\sqrt{(s_c-1)^2+\omega^2}}{ \rho^{\frac{d}{2}-1}}\sin(\omega\log\rho+\delta_2+\alpha_0)+\mathcal O_{\rho\rightarrow\infty}(\rho^{2-d+\alpha})
\end{aligned}
\end{equation}
where
$$
\cos(\alpha_0)=\frac{1-s_c}{\sqrt{(s_c-1)^2+\omega^2}},\,\,\,\, \sin(\alpha_0)=\frac{\omega}{\sqrt{(s_c-1)^2+\omega^2}},\,\,\,\,\alpha_0\in \left(\frac{\pi}{2}, \pi\right).
$$
Thus, in view of \eqref{eq: Asymptote of Q} and \eqref{eq:equationwithtwostars}, there exists $\rho_2>0$ sufficiently small and a constant $\delta>0$ sufficiently small only depending on $\omega$ and $s_c-1$ such that for $0<\rho<\rho_2$, we have
$$\textrm{dist}\Big(\omega\log\rho+\delta_2+\alpha_0, \pi\mathbb{Z}\Big)<\delta\,\Rightarrow\, \rho^{\frac{d}{2}-1}|Q(\rho) - u_\infty(\rho)|\geq \frac{4}{s_c-1}\rho^{\frac{d}{2}-1}|\Lambda Q(\rho)|+\frac{c_1\sin(\alpha_0)}{2}.$$
In view of \eqref{eq:usefulboundtocountnumberofzeros}, we infer for $n\ge n_0$ large enough
\begin{equation}\label{eq:equationwithonestar}
\textrm{dist}\left(\omega\log\left(\frac{\rho_0}{\mu_n}\right)+\delta_2+\alpha_0, \pi\mathbb{Z}\right)\geq \delta
\end{equation}
and \eqref{tobeprovedlambdaq} is proved.\\\\
Combining the two claims proved above, we infer
$$
\begin{aligned}
&\#\Big\{\rho\geq 0\ \Big| \ \Lambda u_n(\rho)=0\Big\}\\
 =& \#\left\{0\leq \rho\leq \frac{\rho_0}{\mu_n}\ \bigg| \ \Lambda Q(\rho)=0\right\}+ \#\Big\{\rho>\rho_0\ \Big| \ \Lambda\psi_1(\rho)=0\Big\}
\end{aligned}
$$
which implies
$$
\#\{\rho\geq 0 \ | \ \Lambda u_{n+1}(\rho)=0\} = \#\{\rho\geq 0 \ | \ \Lambda u_n(\rho)=0\} + \# A_n,
$$
with
$$
A_n := \left\{\frac{\rho_0}{\mu_n}< \rho\leq \frac{\rho_0}{\mu_{n+1}}\ \bigg| \ \Lambda Q(r)=0\right\}.
$$
We claim for $n\geq n_1$ large enough:
\begin{equation}
\label{estcarajo}
\# A_n=1
\end{equation}
which by possibly shifting the numeration by a fixed amount ensures that $\Lambda u_n$ vanishes exactly $k$ times.\\

\noindent{\em Upper bound}. We first claim 
\begin{equation}
\label{upperbound}
\# A_n\le 1
\end{equation}
Recall \eqref{eq:equationwithtwostars} so that there exists $R\geq 1$ large enough such that 
$$
\Big\{\rho\geq R\,\Big|\,\Lambda Q(\rho)=0\Big\}= \Big\{r_q\ \Big|\ q\geq q_1\Big\},\,\,\omega\log(r_q)+\delta_2+\alpha_0 =q\pi +\mathcal O_{r_q\to\infty}(r_q^{1-s_c})
$$
and hence, together with \eqref{eq:equationwithonestar}, we infer 
\begin{equation}
\label{distanceinninimisee}
\inf_{q\geq q_1, n\geq n_1}\left|\log\left(\frac{\rho_0}{\mu_n}\right)-\log(r_q)\right|\geq \frac{\delta}{2\omega}.
\end{equation}
This implies for $n\geq n_1$
\begin{equation}
\label{eq:inclusioninalargerintervalofAk}
A_n \subset \left\{q\geq q_1\ \bigg|\ \log\left(\frac{\rho_0}{\mu_n}\right)+\frac{\delta}{2\omega}\leq \log(r_q) \leq \log\left(\frac{\rho_0}{\mu_{n+1}}\right)-\frac{\delta}{2\omega}\right\}.
\end{equation}
Since $\lambda_{n,+}<\mu_n<\lambda_{n,-}$ with $\lambda_{n,\pm}$ given by \eqref{eq:definitionoflambdakpm}, we have for $k\geq k_1$
$$
\begin{aligned}
&\log\left(\frac{\rho_0}{\mu_{n+1}}\right)-\frac{\delta}{2\omega} - \left(\log\left(\frac{\rho_0}{\mu_n}\right)+\frac{\delta}{2\omega}\right) = \log(\mu_n)-\log(\mu_{n+1})-\frac{\delta}{\omega}\\
&\leq \log(\lambda_{n,-})-\log(\lambda_{n+1,+}) -\frac{\delta}{\omega}\leq \frac{\pi+2\delta_0-\delta}{\omega}.
\end{aligned}
$$
Also, we have for $q\geq q_1$
$$
\log(r_{q+1})-\log(r_q) = \frac{\pi}{\omega}+\mathcal O_{r_q\to\infty}(r_q^{1-s_c}).
$$
We now choose $\delta_0$ such that
\begin{equation}
\label{choicedeltao}
0<\delta_0<\frac{\delta}{4}.
\end{equation}
Then, we infer
 for $n\geq n_1$ and $q\geq q_1$,
$$
 \log\left(\frac{\rho_0}{\mu_{n+1}}\right)-\frac{\delta}{2\omega} - \left(\log\left(\frac{\rho_0}{\mu_n}\right)+\frac{\delta}{2\omega}\right)\leq \frac{\pi}{\omega} -\frac{\delta}{2\omega}<\log(r_{q+1})-\log(r_q)
$$
which in view of \eqref{eq:inclusioninalargerintervalofAk} implies \eqref{upperbound}.\\

\noindent{\em Lower bound}. We now prove \eqref{estcarajo} and assume for a contradiction: $\# A_{n_2}=0.$ Then, let $q_2\geq q_1$ such that
$$r_{q_2} < \frac{\rho_0}{\mu_{n_2}} <  \frac{\rho_0}{\mu_{n_2+1}} < r_{q_2+1}.$$
We infer from \eqref{distanceinninimisee}:
\begin{equation}
\label{eq:equationwithfivestars}
\log(r_{q_2}) \leq  \log\left(\frac{\rho_0}{\mu_{n_2}}\right)-\frac{\delta}{2\omega} <  \log\left(\frac{\rho_0}{\mu_{n_2+1}}\right)+\frac{\delta}{2\omega} \leq \log(r_{q_2+1}).
\end{equation}
However, we have for $n_2\geq n_1$ and $q_2\geq q_1$,
$$
\begin{aligned}
&\log\left(\frac{\rho_0}{\mu_{n_2+1}}\right)+\frac{\delta}{2\omega} - \left(\log\left(\frac{\rho_0}{\mu_{n_2}}\right)-\frac{\delta}{2\omega}\right)= \log(\mu_{n_2})-\log(\mu_{n_2+1})+\frac{\delta}{\omega}\\
&\geq \log(\lambda_{n_2, -})-\log(\lambda_{n_2+1,+}) +\frac{\delta}{\omega}\geq \frac{\pi-2\delta_0+\delta}{\omega}\geq  \frac{\pi}{\omega} + \frac{\delta}{2\omega} > \log(r_{q_2+1}) -\log(r_{q_2})
\end{aligned}
$$
which contradicts \eqref{eq:equationwithfivestars}. This concludes the proof of Proposition \ref{prop: Countable Solutions}.
\end{proof}

\begin{corollary}
\label{cor: Behaviour of u_n}
Let $u_n$ be the solution to \eqref{eq: Self-similar Profile} constructed in \emph{Proposition \ref{prop: Countable Solutions}}. For $\rho_0\ll1$,\\\\
\emph{(i) Convergence to $u_\infty$ as $n\rightarrow \infty$:}
\begin{equation}
\label{eq: Convergence to u_infty}
\lim_{n\rightarrow \infty }\sup_{\rho\ge\rho_0}(1+\rho^\alpha)|u_n(\rho)-u_\infty(\rho)|=0.
\end{equation}
\emph{(ii) Convergence to $Q$ at the origin:} There exists $\mu_n\rightarrow 0$ such that
\begin{equation}
\label{eq: Convergence to Q}
\lim_{n\rightarrow \infty }\sup_{\rho\le\rho_0}\bigg|u_n(\rho)-\mu_n^{-\alpha}Q\bigg(\frac{\rho}{\mu_n}\bigg)\bigg|=0.
\end{equation}
\emph{(iii) Last zeros:} Let 
$$
\rho_{0,n}:=\max\Big\{\rho\,\Big|\,\Lambda u_n(\rho)=0,\,\rho<\rho_0\Big\}, \quad \rho_{\Lambda Q,n}:=\max\Big\{\rho\,\Big|\,\Lambda Q(\rho)=0,\,\rho<\tfrac{\rho_0}{\mu_n}\Big\}.
$$ 
Then 
$$
\rho_{0,n}=\mu_n\rho_{\Lambda Q, n}\Big[1+\mathcal O_{\rho_0\rightarrow 0}(\rho_0^2)\Big].
$$
Furthermore, for $n\ge N$,
$$
e^{-\frac{2\pi}{\omega}}\rho_0<\rho_{0,n}<\rho_0.
$$
\end{corollary}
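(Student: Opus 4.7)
The plan is to establish the three claims separately, exploiting the piecewise description of the matched profile provided by Proposition \ref{prop: Countable Solutions}: on $[0, \rho_0]$, $u_n = \mu_n^{-\alpha}(Q + \mu_n^2 w_{\interior})(\rho/\mu_n)$, and on $[\rho_0, \infty)$, $u_n = u_\infty + \varepsilon(\mu_n)(\psi_1 + w_{\exterior})$, with $\mu_n \to 0$ and $|\varepsilon(\mu_n)| \lesssim \mu_n^{s_c - 1}$.

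For (i), I write $u_n - u_\infty = \varepsilon(\mu_n)(\psi_1 + w_{\exterior})$ on $\rho \ge \rho_0$ and split into $[\rho_0, 1]$ and $[1, \infty)$. On the first subregion the weight $1 + \rho^\alpha$ is bounded and the $X_{\rho_0}$ bound from Proposition \ref{prop: Exterior Solutions} together with the $\rho \to 0$ asymptote \eqref{eq: Asymptote of psi_1} gives $|\psi_1| + |w_{\exterior}| \lesssim \rho_0^{1-d/2}$; on the second, $\psi_1 = O(\rho^{-\alpha})$ from Lemma \ref{lem: psi_1} and $|w_{\exterior}| \lesssim \rho^{-\alpha-1}$ from $X_{\rho_0}$ yield $(1+\rho^\alpha)(|\psi_1| + |w_{\exterior}|) \lesssim 1$. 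In both cases the product is $O(\mu_n^{s_c-1})$ with $\rho_0$ fixed. For (ii), the interior formula gives the exact identity $u_n(\rho) - \mu_n^{-\alpha}Q(\rho/\mu_n) = \mu_n^{2-\alpha} w_{\interior}(\rho/\mu_n)$, and the $\tilde X_{\rho_1}$ bound $|w_{\interior}(\sigma)| \lesssim (1+\sigma)^{3-d/2}$ from Proposition \ref{prop: Interior Solutions} reduces the task to showing that $\mu_n^{s_c-1}(\mu_n + \rho)^{3-d/2}$ tends to $0$ uniformly on $[0, \rho_0]$; the maximum is either $O(\mu_n^{2-\alpha})$ or $O(\mu_n^{s_c-1}\rho_0^{3-d/2})$ depending on the sign of $3 - d/2$, and both vanish since $s_c > 1$.

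For (iii), the key reduction is the identity $\Lambda u_n(\rho) = \mu_n^{-\alpha}[\Lambda Q + \mu_n^2 \Lambda w_{\interior}](\rho/\mu_n)$, which turns the search for zeros of $\Lambda u_n$ on $[0, \rho_0]$ into the search for zeros of $F(\sigma) := \Lambda Q(\sigma) + \mu_n^2 \Lambda w_{\interior}(\sigma)$ on $[0, \rho_0/\mu_n]$. At a simple zero $\sigma_*$ of $\Lambda Q$, the $\tilde X_{\rho_1}$ bound gives $|F(\sigma_*)| \lesssim \mu_n^2 \sigma_*^{3-d/2}$, while the oscillatory asymptote \eqref{eq: Asymptotes of Lambda Q and phi} gives $|F'(\sigma_*)| \gtrsim \sigma_*^{-d/2}$ up to a small perturbation. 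An implicit function theorem at each such $\sigma_*$ then furnishes a nearby zero $\sigma_n$ of $F$ with $\sigma_n = \sigma_*(1 + O(\mu_n^2 \sigma_*^2))$; specialising to $\sigma_* = \rho_{\Lambda Q, n} = O(\rho_0/\mu_n)$ converts $\mu_n^2 \sigma_*^2$ into $O(\rho_0^2)$, yielding $\rho_{0,n} = \mu_n \rho_{\Lambda Q, n}[1 + O(\rho_0^2)]$. The upper bound $\rho_{0,n} < \rho_0$ is by definition, while the lower bound follows by noting that the leading term $\sin(\omega \log \sigma + \tilde \delta)$ in \eqref{eq: Asymptotes of Lambda Q and phi} forces consecutive zeros of $\Lambda Q$ to lie within a multiplicative factor $e^{\pi/\omega}$, hence $\rho_{\Lambda Q, n} \ge e^{-\pi/\omega}(\rho_0/\mu_n)(1+o(1))$; multiplying by $\mu_n$ gives $\rho_{0,n} > e^{-2\pi/\omega}\rho_0$ with ample room for the error terms.

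The hard step is making the implicit function argument in (iii) quantitative uniformly as $\sigma_* \sim \rho_0/\mu_n \to \infty$. This requires propagating simplicity of the zeros of the leading oscillatory term of $\Lambda Q$ through the $O(\sigma^{2-d+\alpha})$ correction in \eqref{eq: Asymptotes of Lambda Q and phi}, and establishing the derivative lower bound $|F'(\sigma_*)| \gtrsim \sigma_*^{-d/2}$ directly from this expansion; the symmetric estimate $|F(\sigma_*)| \lesssim \mu_n^2 \sigma_*^{3-d/2}$ then combines with it to give exactly the $\mu_n^2 \sigma_*^2 = O(\rho_0^2)$ relative displacement that the statement requires.
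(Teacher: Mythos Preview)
Your arguments for (i) and (ii) are essentially identical to the paper's. For (iii), the paper takes a slightly more direct route: instead of invoking the implicit function theorem at each zero of $\Lambda Q$, it substitutes the asymptote \eqref{eq: Asymptotes of Lambda Q and phi} together with the $\tilde X_{\rho_1}$ bound on $\Lambda w_{\interior}$ to obtain, on the window $e^{-2\pi/\omega}\rho_0\le\rho\le\rho_0$,
\[
\Lambda u_n(\rho)\ \propto\ \mu_n^{s_c-1}\rho^{1-\frac{d}{2}}\big[\sin(\omega\log\rho-\omega\log\mu_n+\delta_2)+\mathcal O(\rho_0^2)\big],
\]
from which the phase relation $|\omega\log\rho_{0,n}-\omega\log(\mu_n\rho_{\Lambda Q,n})|\lesssim\rho_0^2$ and the bracketing $e^{-2\pi/\omega}\rho_0<\rho_{0,n}<\rho_0$ follow at once. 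Your IFT packaging is equivalent---the estimates $|F(\sigma_*)|\lesssim\mu_n^2\sigma_*^{3-d/2}$ and $|F'(\sigma_*)|\gtrsim\sigma_*^{-d/2}$ encode exactly the same information---but note that it formally requires one additional observation: the IFT gives a local zero of $F$ near each simple zero of $\Lambda Q$, and to identify the \emph{last} such zero with $\rho_{0,n}$ you must also rule out spurious zeros of $F$ between consecutive zeros of $\Lambda Q$ (and near the endpoint $\rho_0/\mu_n$). This follows from the same sinusoidal lower bound, but the paper's phase-comparison sidesteps the issue entirely.
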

\begin{proof}
Choose $\rho_0\ll 1$  as in the proof of Proposition \ref{prop: Countable Solutions}. \\\\
(i) In view of \eqref{eq: Asymptote of psi_1} and \eqref{eq: Bounds on w}, we infer
$$
\begin{aligned}
&\sup_{\rho\ge\rho_0}(1+\rho^\alpha)|u_n(\rho)-u_\infty(\rho)|= \sup_{\rho\ge\rho_0}(1+\rho^\alpha)|\varepsilon(\mu_n)(\psi_1(\rho)+w_{\exterior}(\rho))|\\
\lesssim& \varepsilon (\mu_n)\bigg[\sup_{\rho_0\le\rho\le 1}(|\psi_1(\rho)|+|w_{\exterior}(\rho)|)+\sup_{\rho\ge1}\rho^\alpha(|\psi_1(\rho)|+|w_{\exterior}(\rho)|)\bigg]\\
\lesssim & \varepsilon (\mu_n)\rho_0^{1-\frac{d}{2}}.
\end{aligned}
$$
Since $\varepsilon(\mu_n)\rightarrow 0$ as $n\rightarrow \infty$, result follows.\\\\
(ii) In view of Proposition \ref{prop: Interior Solutions}, we infer
$$
\sup_{\rho\le\rho_0}\bigg| u_n(\rho)-\mu_n^{-\alpha} Q\bigg(\frac{\rho}{\mu_n}\bigg)\bigg|\le \mu_n^{2-\alpha}\sup_{\rho\le\rho_0} \bigg|w_{\interior}\bigg(\frac{\rho}{\mu_n}\bigg)\bigg|\lesssim \mu_n^{s_c-1}.
$$
Since $\mu_n\rightarrow 0$ as $n\rightarrow \infty$, result follows.\\\\
(iii) In view of \eqref{eq: Asymptotes of Lambda Q and phi}, 
$$
\Lambda Q\bigg(e^{-\frac{3\pi}{2\omega}}\frac{\rho_0}{\mu_n}\bigg)\Lambda Q\bigg(\frac{\rho_0}{\mu_n}\bigg)<0
$$
so by intermediate value theorem, there exists a zero of $\Lambda Q$ in the interval $[e^{-\frac{3\pi}{2\omega}}\tfrac{\rho_0}{\mu_n},\tfrac{\rho_0}{\mu_n})$. In particular,
\begin{equation}
\label{eq: Interval of rho_Lamba Q,n}
e^{-\frac{3\pi}{2\omega}}\frac{\rho_0}{\mu_n}\le \rho_{\Lambda Q, n}\le \frac{\rho_0}{\mu_n}.
\end{equation}
Also, if 
$$
e^{-\frac{2\pi}{\omega}}\rho_0\le \rho\le\rho_0,
$$
then $\frac{\rho}{\mu_n}\gg1$ for $n\ge N\gg 1$. Thus, from \eqref{eq: Asymptotes of Lambda Q and phi} and Proposition \ref{prop: Interior Solutions} since
$$
\sup_{0\le \rho\le \frac{\rho_0}{\mu_n}}(1+\rho)^{\frac{d}{2}-3}|\Lambda w_{\interior}|\lesssim 1,
$$
it follows that
$$
\begin{aligned}
\Lambda u_n(\rho)&=\mu_n^{-\alpha}(\Lambda Q+\mu_n^2\Lambda w_{\interior})\bigg(\frac{\rho}{\mu_n}\bigg)\\
&\propto \mu_n^{s_c-1}\rho^{1-\frac{d}{2}}\Big[\sin(\omega \log\rho-\omega\log\mu_n+\delta_2)+\mathcal O_{\rho\rightarrow 0}(\rho_0^2)\Big].
\end{aligned}
$$
Thus,
$$
\Big|\omega\log\rho_{0,n}-\omega\log\mu_n-\omega\log \rho_{\Lambda Q, n}\Big|\lesssim \rho_0^2.
$$
Hence,
$$
\rho_{0,n}=\mu_n\rho_{\Lambda Q, n} e^{\mathcal O(\rho_0^2)}=\mu_n\rho_{\Lambda Q, n}\Big[1+\mathcal O_{\rho_0\rightarrow 0}(\rho_0^2)\Big].
$$
Furthermore, since \eqref{eq: Interval of rho_Lamba Q,n} holds, we deduce
$$
e^{-\frac{2\pi}{\omega}}\rho_0<\rho_{0,n}<\rho_0.
$$
\end{proof}

\begin{remark} Statements of \emph{Proposition \ref{prop: Countable Solutions}} and \emph{Corollary \ref{cor: Behaviour of u_n}} yields \emph{Theorem \ref{theo: Result 1}}.
\end{remark}

\section{Dissipativity of linearized operator}

We now start the study of the dynamical stability of self-similar profiles. Our aim in this section is to realize the linearized operator as a compact perturbation of a maximal accretive operator in a {\em global in space} Sobolev norm. From now on, we assume $d=3$.\\

\noindent\underline{\emph{Linearized wave equation}}. Recall from Section \ref{sec: Notation} the definition of similarity transformation variables:
$$
\tilde\Psi(s,y)=(T-t)^\alpha\Phi(t,x),\quad s=-\log(T-t).
$$
which maps the wave equation \eqref{eq: Nonlinear Wave} onto
\begin{equation}
\label{eq: Transformed Nonlinear Wave}
\partial_s^2\tilde\Psi=-2y\cdot\nabla\partial_s\tilde\Psi-(1+2\alpha)\partial_s\tilde\Psi+\sum_{i,j}(\delta_{ij}-y_iy_j)\partial_{y_i}\partial_{y_j}\tilde\Psi-2(1+\alpha)y\cdot\nabla\tilde\Psi-\alpha(1+\alpha)\tilde\Psi+|\tilde\Psi|^{p-1}\tilde \Psi.
\end{equation}
We write the above as a system of linearized equations near $u_n$. For the perturbations:
$$
\Psi=\tilde \Psi-u_n,\quad  \Omega=-\partial_s \Psi-\Lambda \Psi,
$$
we have
\begin{equation}
\label{eq: Linearized System}
\partial_s X=\mathcal M X+G,\quad X=\begin{pmatrix} \Psi\\ \Omega\end{pmatrix},\quad G=\begin{pmatrix} 0\\ -|\tilde\Psi|^{p-1}\tilde\Psi+u_n^p+pu_n^{p-1}\Psi\end{pmatrix}
\end{equation}
where
\begin{equation}
\label{eq: M}
\mathcal M=-
\begin{pmatrix}
\Lambda & 1\\
\Delta +p u_n^{p-1} & \Lambda+1
\end{pmatrix}.
\end{equation}
From now on, we write
\begin{equation}
\label{eq: Derivative Notation}
\Psi_j=\nabla^j\Psi,\quad \Omega_j=\nabla^j\Omega
\end{equation}
where
$$
\nabla^{j}=\begin{cases}
\Delta^i & j=2i,\\
\nabla \Delta^i & j=2i+1.
\end{cases}
$$

\begin{lemma}[Commuting with derivatives]
\label{lem: Commuting with derivatives}
For $k\in \mathbb N$, there holds
$$
\nabla^k\mathcal M X= \mathcal M_k\nabla^k X+\tilde{\mathcal M}_kX
$$
where
\begin{equation}
\label{eq: Definition of M_k}
\mathcal M_k=-
\begin{pmatrix}
\Lambda+k & 1\\
\Delta & \Lambda+k+1
\end{pmatrix},
\end{equation}
and $\tilde{\mathcal M_k}$ satisfies the pointwise bound
\begin{equation}
\label{eq: Pointwise Bound on M_k}
|\tilde{\mathcal M_k}X|\lesssim_k
\begin{pmatrix} 
0\\ \sum_{j=0}^{k}\langle \rho\rangle ^{-2+j-k}|\nabla^j\Psi|
\end{pmatrix}.
\end{equation}
\end{lemma}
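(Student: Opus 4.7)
The plan is to compute $\Delta^k\mathcal{M}X$ by commuting $\Delta^k$ past $\Lambda$ on each component of $\mathcal{M}X$, reading off the ``principal'' piece which matches $\mathcal{M}_k\Delta^kX$, and identifying the remainder $\tilde{\mathcal{M}}_kX$ as the Leibniz error produced by the potential $pu_n^{p-1}\Psi$.

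First I would establish the commutator identity $[\Delta,\Lambda]=2\Delta$. Since $\Lambda=\alpha+y\cdot\nabla$, it suffices to check $[\Delta,y\cdot\nabla]=2\Delta$, which follows from a direct computation: $\Delta(y\cdot\nabla f)=2\Delta f+y\cdot\nabla\Delta f$ for any smooth $f$. Iterating on $k$ yields
$$
\Delta^k\Lambda = (\Lambda+2k)\Delta^k,\qquad k\in\mathbb{N}.
$$

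Applying $\Delta^k$ componentwise to $\mathcal{M}X$ and using the above commutator, the first component of $-\Delta^k\mathcal{M}X$ is $(\Lambda+2k)\Delta^k\Psi+\Delta^k\Omega$, and the second is $\Delta(\Delta^k\Psi)+(\Lambda+2k+1)\Delta^k\Omega+p\,\Delta^k(u_n^{p-1}\Psi)$. Comparing with the definition \eqref{eq: Definition of M_k} of $\mathcal{M}_k$, the first two contributions are exactly the entries of $\mathcal{M}_k\Delta^kX$, which forces
$$
\tilde{\mathcal{M}}_kX = -\begin{pmatrix} 0\\ p\,\Delta^k(u_n^{p-1}\Psi)\end{pmatrix}.
$$

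For the pointwise bound \eqref{eq: Pointwise Bound on M_k}, the Leibniz rule gives
$$
\Delta^k(u_n^{p-1}\Psi)=\sum_{j=0}^{2k}c_{k,j}\,\nabla^{2k-j}(u_n^{p-1})\cdot\nabla^j\Psi,
$$
so the estimate reduces to showing $|\nabla^l(u_n^{p-1})|\lesssim_l \langle\rho\rangle^{-2-l}$ for every $l\geq 0$. The algebraic identity $\alpha(p-1)=2$, together with the asymptotic $u_n(\rho)=b_\infty\rho^{-\alpha}+\mathcal{O}(\rho^{1-d/2})$ from Corollary \ref{cor: Behaviour of u_n}, yields $u_n^{p-1}\sim \langle\rho\rangle^{-2}$ at infinity, while smoothness of $u_n$ on $[0,\infty)$ from Propositions \ref{prop: Exterior Solutions} and \ref{prop: Interior Solutions} gives boundedness near the origin; applying the Fa\`a di Bruno formula to the scalar map $u\mapsto u^{p-1}$ propagates the $\langle\rho\rangle^{-2-l}$ decay to all derivatives.

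The one non-routine point is promoting the qualitative description of $u_n$ in Corollary \ref{cor: Behaviour of u_n} to the full pointwise derivative bound $|\nabla^l u_n|\lesssim_l \langle\rho\rangle^{-\alpha-l}$ required by the chain-rule expansion. This is extracted by differentiating the fixed-point relations \eqref{eq: Fixed Point} and \eqref{eq: Fixed Point 2} and iterating the resolvent bounds of Propositions \ref{prop: Exterior Resolvent} and \ref{prop: Interior Resolvent}; once this derivative control is in hand, the Leibniz expansion above closes the lemma without further difficulty.
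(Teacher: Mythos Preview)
Your argument is correct and follows the same strategy as the paper: establish $[\Delta^k,\Lambda]=2k\Delta^k$, peel off the principal part $\mathcal{M}_k\Delta^kX$, and bound the Leibniz remainder $p\,\Delta^k(u_n^{p-1}\Psi)$ via $|\nabla^l(u_n^{p-1})|\lesssim_l\langle\rho\rangle^{-2-l}$.

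The only point of divergence is how you justify the derivative decay of $u_n$. You propose differentiating the fixed-point relations \eqref{eq: Fixed Point}, \eqref{eq: Fixed Point 2} and iterating the resolvent estimates. This can be made to work but is heavier than necessary: the resolvent bounds in Propositions \ref{prop: Exterior Resolvent} and \ref{prop: Interior Resolvent} only control up to two derivatives, so you would have to upgrade those norms first. The paper instead invokes Lemma \ref{lem: Bounds on u_n} in the appendix, which obtains $\partial_\rho^k u_n=\mathcal{O}(\rho^{-\alpha-k})$ and $\partial_\rho^k(u_n^{p-1})=\mathcal{O}(\rho^{-2-k})$ by a short induction: the base case $k=0,1$ comes from the construction, and for $k\ge 2$ one simply solves the self-similar ODE \eqref{eq: Self-similar Profile} for $u_n''$ and differentiates, expressing $u_n^{(k)}$ in terms of lower-order data already controlled. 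This bootstrap via the equation is both shorter and avoids any refinement of the resolvent spaces.
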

\begin{proof}
Direct computation yields the following formulae
$$
[\nabla^k, V]=\sum_{j\le k-1} c_j\nabla^{k-j} V\nabla^j,\quad [\nabla^k,\Lambda]=k\nabla^k.
$$
Hence, by Lemma \ref{lem: Bounds on u_n}, since $\partial_\rho^k (u_n^{p-1})=\mathcal O(\rho^{-2-k})$ as $\rho\rightarrow \infty$ for all $k$,
$$
\nabla^k (\Delta+pu_n^{p-1})\Psi=\Delta\Psi_k+\mathcal O\bigg(\sum_{j=0}^{k}\langle \rho\rangle ^{-2+j-k}|\nabla^j\Psi|\bigg)
$$
and
$$
\nabla^k\Lambda\Omega=(\Lambda+k) \Omega_k,\quad \nabla^k(\Lambda+1)\Omega=(\Lambda+k+1) \Omega_k.
$$
\end{proof}

\subsection{Subcoercivity}
Let us introduce some notations. First, recall the definition of $H_k$ from Section \ref{sec: Notation}. \\\\
\underline{\emph{Weighted $L^2$-space}}. We also define for $\gamma>0$, the weighted $L^2$-space $L_\gamma^2$ as the completion of $C_c^\infty(\mathbb R^3)$ with respect to the norm induced by the inner product
$$
(\Psi,\tilde\Psi)_{L_\gamma^2}=\int_{\mathbb R^3} \Psi\tilde\Psi\langle\rho\rangle^{-2\gamma}\,dy
$$
where $\langle \cdot\rangle$ denotes the Japanese bracket. We write $\Vert \Psi\Vert_{L_\gamma^2}^2=(\Psi,\Psi)_{L_\gamma^2}$.

\begin{lemma}
\label{lem: Compact Embedding}
Recall the notations for the spaces $H_k$ and $L_{k+2}^2$ above. Then the embedding $\iota:H_{k+1}\xhookrightarrow{} L_{k+2}^2$ is compact.
\end{lemma}
\begin{proof}
An improved Hardy's inequality (see \cite{CCF}) states that for all $\alpha\in2\mathbb Z$ and $f\in C_c^\infty(\mathbb R^3\setminus B_1(0))$,
$$
\int_{\mathbb R^3}\frac{|f|^2}{|y|^{2+\alpha}}\,dy\lesssim\int_{\mathbb R^3}\frac{|\nabla f|^2}{|y|^\alpha}\,dy.
$$
Also an improved Hardy-Rellich inequality (see \cite{CCF}) states that for all $\beta\in 2\mathbb Z$ and $f\in C_c^\infty(\mathbb R^3\setminus B_1(0))$
$$
\int_{\mathbb R^3} \frac{|f|^2}{|y|^{4+\beta}}\,dy\lesssim\int_{\mathbb R^3} \frac{|\Delta f|^2}{|y|^\beta}\,dy.
$$
By repeatedly applying these inequalities, starting with $f=(1-\chi)\Psi$ for the cut-off function $\chi$ defined in Section \ref{sec: Notation}, we infer for all $\Psi\in C_c^\infty(\mathbb R^3)$,
$$
\begin{aligned}
&\Vert \Psi\Vert_{L_{k+1}^2(\mathbb R^3\setminus B_1(0))}\lesssim \int_{\mathbb R^3} \frac{|(1-\chi)\Psi|^2}{|y|^{2(k+1)}}\,dy\lesssim \int_{\mathbb R^3} \frac{|\Delta ((1-\chi)\Psi)|^2}{|y|^{2(k-1)}}\,dy\\
\lesssim &\cdots \lesssim \int_{\mathbb R^3} \frac{|\nabla^k((1-\chi)\Psi)|^2}{|y|^2}\,dy\lesssim \int_{\mathbb R^3} |\nabla^{k+1}((1-\chi)\Psi)|^2\,dy\lesssim \Vert \Psi\Vert_{H_{k+1}}^2.
\end{aligned}
$$
By density, above inequality holds also for all $\Psi\in H_{k+1}$. On the other hand, by Rellich-Kondrachov theorem, the embedding
$$
\iota: H_{k+1}\xhookrightarrow{}L_{\loc}^2(\mathbb R^3):=\{\Psi|\, \chi\Psi\in L^2(\mathbb R^3)\text{ for all } \chi\in C_c^\infty(\mathbb R^3)\}
$$
is compact. Combining the two and using smallness of $\langle \rho\rangle^{-2}$ for large $\rho$, result follows.
\end{proof}

\begin{lemma}[Subcoercivity estimate]
\label{lem: Subcoercivity Estimate}
There exist $0<\mu_n$ with $\lim_{n\rightarrow\infty }\mu_n=\infty$ and $(\Pi_i)_{i=1}^{n}\in  H_{k+1}$, $c_n>0$ such that for all $n\ge0$, $\Psi\in H_{k+1}$,
\begin{equation}
\label{eq: Coercivity}
\Vert \Psi\Vert_{H_{k+1}}^2 \ge \mu_n\sum_{j=0}^{k} \int_{\mathbb R^3}|\nabla^j \Psi|^2\langle \rho\rangle ^{-2(k+2-j)}\,dy-c_n\sum_{i=1}^{n}(\Psi,\Pi_i)_{L_{k+2}^2}^2.
\end{equation}
\end{lemma}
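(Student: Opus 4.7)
The plan is to combine a weighted Gagliardo--Nirenberg interpolation for the sum $Q(\Psi):=\sum_{j=0}^{2k}\int_{\mathbb{R}^d}|\nabla^j\Psi|^2 \langle\rho\rangle^{-2\gamma(2k+1-j)/(2k+1)}\,dy$ with a spectral argument based on the compactness of $\mathbb{H}_\Psi\hookrightarrow L^2_\gamma$ from Remark \ref{rem: Compact Embedding}. First I would establish that
\begin{equation*}
\int_{\mathbb{R}^d}|\nabla^j\Psi|^2\langle\rho\rangle^{-2\gamma(2k+1-j)/(2k+1)}\,dy \lesssim \|\Psi\|_{L^2_\gamma}^{2a_j}\|\nabla^{2k+1}\Psi\|_{L^2}^{2(1-a_j)}, \quad a_j:=\frac{2k+1-j}{2k+1},
\end{equation*}
for all $0\le j\le 2k$. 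This is the natural scaling-invariant interpolation, and it follows by iterating the weighted Hardy--Rellich chain used in Remark \ref{rem: Compact Embedding} (applied on $\mathbb{R}^d\setminus B_1$, the interior contribution being handled by a standard local Sobolev embedding since $\langle\rho\rangle$ is bounded near the origin). Summing over $j$ and using $a_j\ge a_*:=1/(2k+1)>0$ together with Young's inequality gives $Q(\Psi)\lesssim \|\Psi\|_{L^2_\gamma}^{2a_*}\|\Psi\|_{\mathbb{H}_\Psi}^{2(1-a_*)}$, which is the crucial interpolation with a strictly positive power of the $L^2_\gamma$ norm.

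Next I would set up a generalized eigenvalue problem. Define $T\colon\mathbb{H}_\Psi\to\mathbb{H}_\Psi$ through Riesz by $\langle\langle Tf,\phi\rangle\rangle=(f,\phi)_\gamma$ for all $\phi\in\mathbb{H}_\Psi$. By Remark \ref{rem: Compact Embedding}, $T$ factors through the compact embedding $\mathbb{H}_\Psi\hookrightarrow L^2_\gamma$ and is therefore compact; it is also self-adjoint and positive. The spectral theorem then produces eigenfunctions $\{\Pi_i\}_{i\ge 1}\subset\mathbb{H}_\Psi$, normalized so that $(\Pi_i,\Pi_j)_\gamma=\delta_{ij}$, with generalized eigenvalues $\langle\langle\Pi_i,\phi\rangle\rangle=\lambda_i(\Pi_i,\phi)_\gamma$ and $\lambda_i\to\infty$; in particular $\langle\langle\Pi_i,\Pi_j\rangle\rangle=\lambda_i\delta_{ij}$, so the $\Pi_i$ are simultaneously orthogonal in both inner products. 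Given $\Psi\in\mathbb{H}_\Psi$ and $n\ge 1$, decompose $\Psi=\Psi_\perp+\sum_{i=1}^n c_i\Pi_i$ with $c_i=(\Psi,\Pi_i)_\gamma$, so $\Psi_\perp$ is orthogonal to $\mathrm{span}\{\Pi_1,\dots,\Pi_n\}$ in both inner products. The min-max characterization gives $\|\Psi_\perp\|_{L^2_\gamma}^2\le \lambda_{n+1}^{-1}\|\Psi_\perp\|_{\mathbb{H}_\Psi}^2$, hence by Step~1
\begin{equation*}
Q(\Psi_\perp)\lesssim \lambda_{n+1}^{-a_*}\|\Psi_\perp\|_{\mathbb{H}_\Psi}^2\le \lambda_{n+1}^{-a_*}\langle\langle\Psi,\Psi\rangle\rangle,
\end{equation*}
while on the finite-dimensional piece $Q\bigl(\sum c_i\Pi_i\bigr)\le C_n\sum c_i^2=C_n\sum_{i=1}^n(\Psi,\Pi_i)_\gamma^2$ by equivalence of norms. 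Combining via $Q(\Psi)\le 2Q(\Psi_\perp)+2Q(\sum c_i\Pi_i)$ and rearranging yields the claim with $\mu_n\sim \lambda_{n+1}^{a_*}\to\infty$ and $c_n\sim \mu_n C_n$.

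The main obstacle is Step 1: tracking the precise weight exponents through the iterated Hardy--Rellich chain at every intermediate derivative order $j$ and confirming the sharp Gagliardo--Nirenberg scaling. One has to verify that the parameters $2\gamma(2k+1-j)/(2k+1)-2m$ arising after $m$ iterations avoid the scale-invariant critical values forbidden in the improved Hardy--Rellich inequalities of \cite{4}; the hypothesis $2k+1<\gamma\le 2k+2$ is precisely what ensures admissibility at the endpoints, and one has to check that the intermediate cases follow by a consistent choice of iteration path (or by log-convex interpolation between the two endpoint inequalities).
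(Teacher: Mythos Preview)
Your proposal is correct and follows essentially the same strategy as the paper: both combine the spectral theory of the compact self-adjoint operator realizing $(\cdot,\cdot)_\gamma$ inside $\mathbb{H}_\Psi$ via Riesz (your $T$ is the paper's $L$, with reciprocal eigenvalues) together with a weighted Gagliardo--Nirenberg interpolation controlling $Q(\Psi)$ by $\|\Psi\|_{L^2_\gamma}$ and $\|\nabla^{2k+1}\Psi\|_{L^2}$. The only difference is that the paper cites \cite{14} directly for this interpolation in its additive Young form $Q(\Psi)\le\varepsilon\|\nabla^{2k+1}\Psi\|_{L^2}^2+c_{\varepsilon}\|\Psi\|_{L^2_\gamma}^2$, thereby bypassing the Hardy--Rellich exponent tracking you flag as the main obstacle.
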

\begin{proof}
Given $T\in L_{k+2}^2$, the antilinear map $h\mapsto (T,h)_{k+2}$ is continuous on $H_{k+1}$ since
$$
(h,h)_{L_{k+2}^2}\le ( h,h)_{H_{k+1}}
$$
by Lemma \ref{lem: Compact Embedding}. By Riesz, there exists a unique $L(T)\in H_{k+1}$ such that
\begin{equation}
\label{eq: Definition of L}
\forall h\in H_{k+1},\quad ( L(T),h)_{H_{k+1}}=(T,h)_{L_{k+2}^2}
\end{equation}
and by setting $h=L(T)$, we infer that $L: L_{k+2}^2\rightarrow H_{k+1}$ is a bounded linear map. By Lemma \ref{lem: Compact Embedding}, the map $\iota\circ L:L_{k+2}^2\rightarrow L_{k+2}^2$ is compact. If $\Psi_i=L(T_i)$, $i=1,2$, then
$$
(L(T_1),T_2)_{L_{k+2}^2}=(\Psi_1,T_2)_{L_{k+2}^2}=(\Psi_1,L(T_2))_{H_{k+1}}= ( \Psi_1,\Psi_2 )_{H_{k+1}}.
$$
Similarly,
$$
(T_1,L(T_2))_{L_{k+2}^2}=( \Psi_1,\Psi_2 )_{H_{k+1}}=(L(T_1),T_2)_{L_{k+2}^2}
$$
i.e. $L$ is self-adjoint on $L_{k+2}^2$. Since $L\succ 0$ from \eqref{eq: Definition of L}, there exists an $L_{k+2}^2$-orthonormal eigenbasis $(\Pi_{n,i})_{1\le i\le I(n)}$ of $L$ with positive eigenvalues $\lambda_n\rightarrow0$. The eigenvalue equation implies $\Pi_{n,i}\in H_{k+1}$. Let
$$
\mathcal A_n=\Big\{\Psi\in H_{k+1} \,\Big|\, (\Psi,\Psi)_{L_{k+2}^2}=1,\, (\Psi,\Pi_{j,i})_{L_{k+2}^2}=0,\,1\le i\le I(j),\, 1\le j\le n\Big\}
$$
and consider the minimization problem
$$
I_n=\inf_{\Psi\in \mathcal A_n}(\Psi,\Psi)_{H_{k+1}},
$$
whose infimum is attained at some $\Psi\in \mathcal A_n$ since the embedding $\iota:H_{k+1}\xhookrightarrow{} L_{k+2}^2$ is compact. Also, by a standard Lagrange multiplier argument,
$$
\forall  h\in H_{k+1},\quad( \Psi,h)_{H_{k+1}}=\sum_{j=1}^n\sum_{i=1}^{I(j)}\beta_{i,j}(\Pi_{j,i},h)_{L_{k+2}^2}+\beta(\Psi,h)_{L_{k+2}^2}.
$$
Set $h=\Pi_{j,i}$ and since $\Pi_{j,i}$ is an eigenvector of $L$, we infer $\beta_{i,j}=0$ and in view of \eqref{eq: Definition of L}, $L(\Psi)=\beta^{-1}\Psi$. Together with the orthogonality conditions, $\beta^{-1}\le \lambda_{n+1}$. Hence
\begin{equation}
\label{eq: Lower Bound on I_n}
I_n=( \Psi,\Psi)_{H_{k+1}}=\beta(\Psi,\Psi)_{L_{k+2}^2}\ge\frac{1}{\lambda_{n+1}}.
\end{equation}
For all $\varepsilon >0$, $k\ge 1$, from Gagliardo-Nirenberg interpolation inequality with weight (see \cite{DDS}) together with Young's inequality, we infer
$$
\sum_{j=0}^{k}\int_{\mathbb R^3}|\nabla^j\Psi|^2\langle \rho\rangle ^{-2(k+2)\frac{k+1-j}{k+1}}\,dy\le\varepsilon\int_{\mathbb R^3}|\nabla^{k+1}\Psi|^2\,dy+c_{\varepsilon,k}\int_{\mathbb R^3}|\Psi|^2\langle \rho\rangle ^{-2(k+2)}\,dy.
$$
Together with \eqref{eq: Lower Bound on I_n}, we have that for all $\Psi$ satisfying orthogonality condition of $\mathcal A_n$ and $\delta>0$,
$$
\sum_{j=0}^{k}\int_{\mathbb R^3}|\nabla^j\Psi|^2\langle \rho\rangle ^{-2(k+2-j)}\,dy\le (\varepsilon + c_{\varepsilon, k}\lambda_{n+1})\Vert\Psi\Vert_{H_{k+1}}^2.
$$
Choosing $\varepsilon_n\rightarrow 0$ such that $c_{\varepsilon_n,k}\lambda_{n+1}\le \varepsilon_n$ yields \eqref{eq: Coercivity}.
\end{proof}

\subsection{Dissipativity}
\label{Sec: Dissipativity}
We now turn to the fundamental dissipativity property. Let us introduce some notations.\\

\noindent\underline{\emph{Sobolev space}}. Recall \eqref{eq: Derivative Notation} and the definition of $H_k$ from Section \ref{sec: Notation}. Let 
\begin{equation}
\label{eq: Sobolev Space}
\mathbb H_{k}:= H_{k+1}\times H_k
\end{equation}
with the inner product: 
\begin{equation}
\label{eq: Principal Part}
\langle X,\tilde X\rangle=\underbrace{(\Psi_{k+1},\tilde \Psi_{k+1})+(\Omega_k,\tilde \Omega_k)}_{:=\langle X,\tilde X\rangle_1}+\underbrace{\int_{\mathbb R^3} \chi(\Psi \tilde \Psi +\Omega\tilde \Omega)\ dy}_{:=\langle X,\tilde X\rangle_2},
\end{equation}
for
$$
X=\begin{pmatrix} \Psi\\\Omega\end{pmatrix},\quad \tilde X=\begin{pmatrix} \tilde\Psi\\\tilde\Omega\end{pmatrix}.
$$
Further, we define the domain of $\mathcal M$
$$
D(\mathcal M)=\{X\in\mathbb H_k\,|\, \mathcal M X\in\mathbb H_k\}
$$
which is a Banach space equipped with the graph norm
$$
\Vert X\Vert_{D(\mathcal M)}=\Vert X\Vert_{\mathbb H_k}+\Vert \mathcal MX\Vert_{\mathbb H_k}.
$$

\noindent\underline{\emph{Spherical harmonics}}. Denote by $\Delta_{\mathbb S^{d-1}}$ the Laplace-Beltrami operator defined on a unit sphere $\mathbb S^{d-1}$. Then we can write
$$
\Delta=\frac{\partial^2}{\partial\rho^2}+\frac{d-1}{\rho}\frac{\partial}{\partial\rho}+\frac{1}{\rho^2} \Delta_{\mathbb S^{d-1}}=: \mathcal L+\rho^{-2}\Delta_{\mathbb S^{d-1}}.
$$
Denote by $Y^{(l,m)}$ the orthonormal $\Delta_{\mathbb S^{d-1}}$-eigenbasis (e.g. spherical harmonics if $d=3$) of $L^2(\mathbb S^{d-1})$ with discrete eigenvalues $-\lambda_m=-m(m+d-2)$ for $m\ge0$. We fix $d=3$ and define the space of test functions
$$
\mathcal D = \bigg\{X=\sum_{l,m} X_{l,m}(\rho)Y^{(l,m)} \in C_c^\infty(\mathbb R^3,\mathbb R^2)\emph{ is a finite sum}\bigg\}.
$$
Note then, that $\mathcal D$ is dense in $\mathbb H_k$.\\

\noindent\underline{\emph{Dissipativity}}. We will first prove dissipativity in the space of test functions
\begin{equation}
\label{eq: D_R}
\mathcal D_R=\bigg\{ X\in C^\infty(\mathbb R^3,\mathbb R^2)\bigg|\sum_{m=0}^{k+1}\sup_{\mathbb R^3} \rho^{\alpha+R+m}\left(|\nabla^m \Psi|+\mathbbm 1_{m\ge1}|\nabla^{m-1} \Omega|\right)< \infty\bigg\}
\end{equation}
and argue that the result extends to $\mathbb H_k$.

\begin{proposition}[Maximal dissipativity]
\label{prop: Maximal Dissipativity}
For all $k\ge 3$, there exists $c_k>0$ and $(X_i)_{1\le i\le N}\in\mathbb H_{k}$ such that for the finite rank projection operator
$$
\mathcal P=\sum_{i=1}^N\langle \cdot,X_i\rangle X_i,
$$
the modified operator
$$
\tilde {\mathcal M}=\mathcal M-\mathcal P
$$
is dissipative:
\begin{equation}
\label{eq: Dissipativity}
\forall X\in D(\mathcal M),\quad  \langle -\tilde {\mathcal M}X,X\rangle \ge c_k\langle X,X\rangle
\end{equation}
and is maximal:
\begin{equation}
\label{eq: Maximality}
\forall R>0,\,\, F\in \mathbb H_k,\quad \exists X\in D(\mathcal M)\quad \text{such that}\quad (-\tilde {\mathcal M}+ R)X=F.
\end{equation}
\end{proposition}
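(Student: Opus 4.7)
I split the argument into dissipativity \eqref{eq: Dissipativity} and maximality \eqref{eq: Maximality}. The overall strategy for \eqref{eq: Dissipativity} will be to compute $\langle-\mathcal MX,X\rangle$ via integration by parts, extract positive coercive contributions growing linearly in $k$, and absorb the remaining lower-order errors using Lemma \ref{lem: Subcoercivity Estimate}; the finite-rank defect in that lemma is precisely what the projection $\mathcal P$ will compensate for.

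For the top-order part $\langle-\mathcal MX,X\rangle_1$, I will apply Lemma \ref{lem: Commuting with derivatives} to write $\Delta^k\mathcal MX = \mathcal M_k\Delta^kX + \tilde{\mathcal M}_kX$. Integration by parts using the standard identities $(\Lambda f,f) = -s_c\Vert f\Vert_{L^2}^2$ and $(-\Delta\Lambda f,f) = (1-s_c)\Vert\nabla f\Vert_{L^2}^2$ then yields the coercive contribution
$$
(2k+1-s_c)\bigl(\Vert\nabla\Psi_k\Vert_{L^2}^2+\Vert\Omega_k\Vert_{L^2}^2\bigr),
$$
the cross terms between $\Delta\Psi_k$ and $\Omega_k$ cancelling exactly. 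The analogous integration by parts on $\langle-\mathcal MX,X\rangle_2$ with weight $\langle\rho\rangle^{-2\gamma}$ contributes $(\gamma-s_c)\Vert\Psi\Vert_\gamma^2+(\gamma+1-s_c)\Vert\Omega\Vert_\gamma^2$, also of order $k$ since $2k+1<\gamma\le 2k+2$. The residual $\tilde{\mathcal M}_k$ terms, the potential contribution $pu_n^{p-1}$ (which decays like $\langle\rho\rangle^{-2}$ thanks to the asymptotics $u_n\sim u_\infty$ from Section 2 and $u_\infty^{p-1}=b_\infty^{p-1}\rho^{-2}$), and the weighted cross terms will all be absorbed via Cauchy--Schwarz and Hardy-type inequalities into the intermediate weighted Sobolev norms appearing on the right of Lemma \ref{lem: Subcoercivity Estimate}. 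This produces
$$
\langle-\mathcal MX,X\rangle\ge ck\langle X,X\rangle - C\sum_{j=0}^{2k}\int_{\mathbb R^d}\bigl(|\nabla^j\Psi|^2+|\nabla^j\Omega|^2\bigr)\langle\rho\rangle^{-2\gamma\frac{2k+1-j}{2k+1}}\,dy,
$$
and Lemma \ref{lem: Subcoercivity Estimate}, applied to $\Psi$ and $\Omega$ separately, absorbs the right-hand side modulo a finite number of $L^2_\gamma$ projections onto profiles $\{\Pi_i\}$. Taking $\mathcal P$ to be the orthogonal projection in $\mathbb H_{2k}$ onto the corresponding family $\{(\Pi_i,0),(0,\Pi_i)\}$, suitably normalized, closes \eqref{eq: Dissipativity}.

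For maximality, by the Lumer--Phillips reduction it suffices to prove surjectivity of $R-\tilde{\mathcal M}:D(\mathcal M)\to\mathbb H_{2k}$ for some $R>0$. Given $F\in\mathcal D$, I will decompose $F$ along the spherical basis $Y^{(l,m)}$ and reduce $(R-\tilde{\mathcal M})X=F$ mode by mode to a radial ODE system on $(0,\infty)$ with singular points at $\rho=0$ and at the renormalized light cone $\rho=1$. Proposition \ref{prop: Singular ODE} together with the fundamental-solution and variation-of-constants machinery of Propositions \ref{prop: Exterior Resolvent} and \ref{prop: Interior Resolvent}, adapted to the spherical mode $m$ and the shift $R$, produces a unique smooth $X\in\mathcal D_R$ with the prescribed decay $\rho^{-\alpha-R-m}$. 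The a priori bound $\Vert X\Vert_{\mathbb H_{2k}}\lesssim R^{-1}\Vert F\Vert_{\mathbb H_{2k}}$ coming from \eqref{eq: Dissipativity}, combined with density of $\mathcal D$ in $\mathbb H_{2k}$, extends solvability to all of $\mathbb H_{2k}$, giving \eqref{eq: Maximality}.

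The main obstacle will be the bookkeeping in the dissipativity computation: tracking the potential, commutator, and weighted cross-term errors in precisely the intermediate Sobolev norms appearing in Lemma \ref{lem: Subcoercivity Estimate}, and controlling absolute constants so that the coercive coefficient $ck$ genuinely dominates uniformly in $k$ once every error has been collected. The maximality step, once the dissipativity-based a priori estimate is in hand, reduces to an ODE construction well-suited to the singular-point machinery already developed in Sections 2--3.
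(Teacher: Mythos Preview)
Your plan matches the paper's strategy closely, and the dissipativity computation is essentially the same: commute $\Delta^k$ through $\mathcal M$, integrate by parts to produce the $(2k+1-s_c)$ coercive factor, and absorb all subcritical errors via the subcoercivity Lemma~\ref{lem: Subcoercivity Estimate}. Two points need adjustment, however.

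First, in the maximality step you propose to solve $(R-\tilde{\mathcal M})X=F$ mode by mode in the spherical decomposition. This does not work as written: $\tilde{\mathcal M}=\mathcal M-\mathcal P$, and the finite-rank projection $\mathcal P$ is built from the abstract Riesz representatives $X_i\in\mathbb H_{2k}$, which have no reason to be of product form $f(\rho)Y^{(l,m)}$. The equation $(R-\tilde{\mathcal M})X=F$ therefore does \emph{not} decouple along the spherical basis. The paper's remedy is to first solve $(R-\mathcal M)X=F$ mode by mode (this operator \emph{is} radial), obtain the resolvent bound $\Vert(R-\mathcal M)^{-1}\Vert\lesssim R^{-1}$ from dissipativity, and only then write $R-\tilde{\mathcal M}=(R-\mathcal M)[\mathrm{id}+(R-\mathcal M)^{-1}\mathcal P]$ as a bounded perturbation of an invertible operator. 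You should reorganize your argument accordingly.

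Second, there is a logical point you glide over: the integrations by parts producing the coercive term are only justified a priori on a class $\mathcal D_R$ of smooth functions with sufficient decay, not on all of $D(\mathcal M)$. The paper closes this by using the ODE solvability from Step~2 to show $\mathcal D_R$ is dense in $D(\mathcal M)$ in the graph norm (Cauchy sequences in $\mathcal D_R$ via the resolvent bound), and only then extends \eqref{eq: Dissipativity} to the full domain. Without this density argument the inequality \eqref{eq: Dissipativity} is not established on $D(\mathcal M)$, and the Lumer--Phillips machinery cannot be invoked. Your side remark about extracting extra coercivity from $\langle\cdot,\cdot\rangle_2$ is harmless but unnecessary: the paper simply bounds the entire $\langle -\mathcal MX,X\rangle_2$ from below by a negative intermediate Sobolev quantity and feeds it into Lemma~\ref{lem: Subcoercivity Estimate}.
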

\begin{proof}
\textbf{Step 1} (Dissipativity on dense subset): We claim the bound \eqref{eq: Dissipativity} for $X\in \mathcal D_R$ for $R$ sufficiently large so integrating by parts is justified. Integrate by parts the principal part of the inner product defined in \eqref{eq: Principal Part}:
$$
\begin{aligned}
&\langle -\mathcal M X, X\rangle_1=(\nabla^{k+2} (\mathcal MX)_\Psi,\Psi_k)-(\nabla^k (\mathcal MX)_\Omega,\Omega_k)\\
=& \int_{\mathbb R^3} \Big[\nabla((\Lambda +k)\Psi_k+\Omega_k)\cdot\nabla\Psi_k+(\Delta\Psi_k+(1+k+\Lambda)\Omega_k-(\tilde{\mathcal M}_kX)_\Omega)\cdot\Omega_k\Big]\,dy\\
=& \int_{\mathbb R^3} \Big[\nabla((\Lambda+k)\Psi_k)\cdot\nabla\Psi_k+(1+k+\Lambda)\Omega_k \cdot\Omega_k-(\tilde{\mathcal M}_kX)_\Omega\cdot \Omega_k\Big]\,dy\\
=& (-s_c+k+1)\Big[(\nabla\Psi_k,\nabla\Psi_k)+(\Omega_k,\Omega_k)\Big]- \int_{\mathbb R^3} (\tilde{\mathcal M}_k X)_\Omega\cdot\Omega_k \,dy
\end{aligned}
$$
where in the last equality, we have used the Pohozaev identity. In view of \eqref{eq: Pointwise Bound on M_k} and by Young's inequality, we infer 
$$
\bigg|\int_{\mathbb R^3} (\tilde{\mathcal M}_k X)_\Omega\cdot\Omega_k\,dy\bigg|\le\varepsilon\int_{\mathbb R^3} | \Omega_k|^2\,dy+C_{\varepsilon,k}
 \sum_{j=0}^k\int_{\mathbb R^3} |\nabla^j \Psi|^2\langle \rho\rangle ^{-4+2j-2k}\,dy.
$$
Taking $\varepsilon>0$ small, it follows that 
$$
\langle -\mathcal M X, X\rangle_1\ge 2c_k\Big[(\Psi_{k+1},\Psi_{k+1})+(\Omega_k,\Omega_k)\Big]-C_k \sum_{j=0}^k\int_{\mathbb R^3} |\nabla^j \Psi|^2\langle \rho\rangle ^{-4+2j-2k}\,dy.
$$
We also lower bound the non-principal part: 
$$
\begin{aligned}
&\langle -\mathcal M X, X\rangle_2=-\int_{\mathbb R^3} \chi\Big[(\mathcal MX)_\Psi\Psi+(\mathcal MX)_\Omega\, \Omega\Big] \,dy\\
=& \int_{\mathbb R^3}\chi \Big[(\Lambda\Psi+\Omega)\Psi+((\Delta+pu_n^{p-1})\Psi+(1+\Lambda)\Omega)\,\Omega\Big] \,dy\\
\ge&- C\int_{|y|\le2}  \Big[|\Psi|^2+|\Delta \Psi|^2+|\Omega|^2+|\nabla\Omega|^2\Big]\,dy
\end{aligned}
$$
where the last inequality follows since $\chi=0$ for $|y|\ge2$. Thus, by adding the principal and non-principal parts, we infer
$$
\langle -\mathcal M X, X\rangle \ge 2c_k\langle X,X\rangle- C_k \sum_{j=0}^k\int_{\mathbb R^3} |\nabla^j \Psi|^2\langle \rho\rangle ^{-4+2j-2k}\,dy-C\Vert X\Vert_{H^2(|y|\le2)}^2.
$$
We conclude using \eqref{eq: Coercivity} and an analogous result for $\Omega$ that
$$
\langle -\mathcal M X, X\rangle \ge c_k\langle X, X\rangle-C\left[\sum_{i=1}^N (\Psi,\Pi_i)_{L_{k+2}^2}^2+\sum_{i=1}^N (\Omega,\Xi_i)_{L_{k+1}^2}^2\right]
$$
for $(\Pi_i)$ as in Lemma \ref{lem: Subcoercivity Estimate} and for some $\Xi_i\in L_{k+1}^2$. Since the linear form
$$
X=(\Psi,\Omega)\mapsto \sqrt{C}(\Psi,\Pi_i)_{L_{k+2}^2}
$$
is continuous on $\mathbb H_k$, by Riesz theorem, there exists $X_i\in\mathbb H_k$ such that 
$$
\forall X\in\mathbb H_k,\quad \langle X,X_i\rangle =(\Psi,\Pi_i)_{L_{k+2}^2}
$$
and similarly for $(\Xi_i)$. Hence, the claim \eqref{eq: Dissipativity} follows for all $X\in \mathcal D_R$.\\\\
\textbf{Step 2} (ODE formulation of maximality): Next, we claim that for all $R$ sufficiently large, 
\begin{equation}
\label{eq: Maximality on Dense Subset 1}
\forall F\in \mathcal D,\quad \exists! X\in \mathbb H_k\quad\text{such that}\quad (-\mathcal M+R)X=F. 
\end{equation}
Furthermore, we claim that $X\in\mathcal D_R$. Note that this is equivalent to
\begin{equation}
\label{eq: Maximality on Dense Subset 2}
\begin{cases}
(\Lambda+R)\Psi+\Omega=F_\Psi \\
(\Delta+ pu_n^{p-1})\Psi+(\Lambda+R+1)\Omega=F_\Omega.
\end{cases}
\end{equation}
Let $F\in \mathcal D$. Then, solving for $\Psi$, we have
\begin{equation}
\label{eq: Maximality for Psi 1}
[\Delta-(\Lambda+R+1)(\Lambda+R)+pu_n^{p-1}]\Psi=\underbrace{F_\Omega-(\Lambda+R+1)F_\Psi}_{:=H}.
\end{equation}
Since $\Lambda$ commutes with $\Delta_{\mathbb S^{d-1}}$, we can write
$$
F=\sum_{l,m} F_{l,m}Y^{(l,m)},\quad H=\sum_{l,m} H_{l,m}Y^{(l,m)}
$$
as a finite sum where $H_{l,m}(\rho)Y^{(l,m)}\in C_{c}^\infty(\mathbb R^3)$. Then the solution is of the form 
\begin{equation}
\label{eq: Decomposition of Psi}
\Psi=\sum_{l,m} Y^{(l,m)} \Psi_{l,m},\quad \Big[\mathcal L-\rho^{-2}\lambda_m-(\Lambda+R+1)(\Lambda+R)+pu_n^{p-1}\Big]\Psi_{l,m}(\rho)=H_{l,m}(\rho)
\end{equation}\\
By Lemma \ref{lem: Global Existence}, it follows that for all $R$ sufficiently large and $F_{l,m}Y^{(l,m)}\in C_c^\infty(\mathbb R^3,\mathbb R^2)$, there exists unique $\Psi_{l,m}(\rho)Y^{(l,m)}\in H^{k+1}(\mathbb R^3)$ solution to \eqref{eq: Decomposition of Psi}. Hence, there exists a unique $\Omega_{l,m}(\rho)Y^{(l,m)}\in H^k(\mathbb R^3)$ given by first equation of \eqref{eq: Maximality on Dense Subset 2} so that $X_{l,m}(\rho)Y^{(l,m)}=(\Psi_{l,m},\Omega_{l,m})Y^{(l,m)}\in \mathbb H_k$ smooth. Thus, we have \eqref{eq: Maximality on Dense Subset 1}. Also, from the decay properties of each $X_{l,m}$ proved in Lemma \ref{lem: Global Existence}, we infer $X\in \mathcal D_R$.\\\\
{\bf Step 3} (Density of $\mathcal D_R$): Now, we extend these results from $\mathcal D_R$ to $D(\mathcal M)$. Claim that for $R$ large, $\mathcal D_R\subset D(\mathcal M)$ is dense. For $X\in D(\mathcal M)$, we have $X$, $\mathcal MX\in\mathbb H_k$ so there exists a sequence $(Y_n)\in \mathcal D$ such that
$$
Y_n\rightarrow (-\mathcal M+R)X\quad\text{in }\mathbb H_k.
$$
By \eqref{eq: Maximality on Dense Subset 1} and Lemma \ref{lem: Global Existence}, there exists unique $X_n\in\mathbb H_k$ smooth solution to 
$$
(-\mathcal M+R)X_n=Y_n\rightarrow (-\mathcal M+R)X ,\quad X_n\in \mathbb H_k.
$$
It suffices to prove the $X_n\rightarrow X$ in $\mathbb H_k$. Recall that for $R$ sufficiently large all integration by parts used to prove \eqref{eq: Dissipativity} is justified. Then since $X_n\in \mathcal D_R$, \eqref{eq: Dissipativity} holds for $X_n-X_m$ i.e.
$$
\begin{aligned}
& \langle Y_n-Y_m, X_n-X_m\rangle =  \langle (-\mathcal M+R)(X_n-X_m), X_n-X_m\rangle \\
 =& \langle (-\mathcal M+\mathcal P)(X_n-X_m), X_n-X_m\rangle - \langle \mathcal P(X_n-X_m), X_n-X_m\rangle +R\Vert X_n-X_m\Vert _{\mathbb H_k}^2\\
\ge& R\Vert X_n-X_m\Vert _{\mathbb H_k}^2 -  \langle \mathcal P(X_n-X_m), X_n-X_m\rangle.
\end{aligned} 
$$ 
Since $\mathcal P$ is a bounded operator, we infer for $R$ large,
$$
\frac{R}{2}\Vert X_n-X_m\Vert_{\mathbb H_k}\le \Vert Y_n-Y_m\Vert_{\mathbb H_k}.
$$
In view of the convergence of $(Y_n)$ in $\mathbb H_k$, we deduce that $(X_n)$ is a Cauchy sequence hence, convergent in $\mathbb H_k$ to say, $\tilde X$. Then $\tilde X-X\in\mathbb H_k$ and
$$
(-\mathcal M+R)(\tilde X-X)=0
$$
as distributions. By the uniqueness statement in \eqref{eq: Maximality on Dense Subset 1}, it follows that $\tilde X=X$ i.e.
$$
X_n\rightarrow X,\quad \mathcal MX_n\rightarrow\mathcal MX \quad\text{in }\mathbb H_k\quad \Longleftrightarrow \quad X_n\rightarrow X\quad\text{in }\,\,D(\mathcal M).
$$
Hence, $\mathcal D_R$ is dense in $D(\mathcal M)$ as claimed.\\\\
\textbf{Step 4} (Conclusion): Since \eqref{eq: Dissipativity} holds for all $X\in \mathcal D_R$, by density of $\mathcal D_R$, we have dissipativity i.e. \eqref{eq: Dissipativity} holds for all $X\in D(\mathcal M)$. It remains to prove \eqref{eq: Maximality}. Let $F\in \mathbb H_k$. There exists $(F_n)\in \mathcal D$ such that 
$$F_n\rightarrow F\quad\text{in }\mathbb H_k.$$
By \eqref{eq: Maximality on Dense Subset 1}, there exists $X_n\in\mathbb H_k$ solution to
$$
(-\mathcal M+R)X_n=F_n.
$$
Using \eqref{eq: Dissipativity} and arguing as in the proof of density, we infer for $R$  large,
$$
\frac{R}{2}\Vert X_n-X_m\Vert_{\mathbb H_k}\le \Vert F_n-F_m\Vert_{\mathbb H_k}
$$
so $X_n$ has a limit say, $X\in \mathbb H_k$. Since $F_n$ converges to $F$ in $\mathbb H_k$ and $D(\mathcal M)$ is a Banach space, we infer
$$
(-\mathcal M+R)X=F,\quad X\in D(\mathcal M).
$$
Thus we have shown that for $R$ large,
\begin{equation}
\label{eq: Maximality of M}
\forall F\in \mathbb H_k,\quad \exists X\in D(\mathcal M)\quad \text{such that}\quad (-\mathcal M+R)X=F.
\end{equation}
Now we prove this for $\tilde{\mathcal M}$. Let $F\in\mathbb H_k$. Since $\mathcal P$ is bounded, for $R$ large, by \eqref{eq: Dissipativity}, for $X$ as in \eqref{eq: Maximality of M},
$$
\langle F,X\rangle =\langle(-\mathcal M+R)X,X\rangle = \langle(-\tilde{\mathcal M}-\mathcal P+R)X,X\rangle \ge \frac{R}{2}\Vert X\Vert_{\mathbb H_k}^2.
$$
Thus, for all $F\in \mathbb H_k$, solution $X$ to \eqref{eq: Maximality of M} is unique i.e. $(-\mathcal M+R)^{-1}$ is well-defined on $\mathbb H_k$ with
$$
\Vert (-\mathcal M+R)^{-1}\Vert\lesssim \frac{1}{R}.
$$
Hence,
$$
-\tilde{\mathcal M}+R=-\mathcal M+\mathcal P+R=(-\mathcal M+R)[\id+(-\mathcal M+R)^{-1}\mathcal P]
$$
is invertible on $\mathbb H_k$ for $R$ large which yields \eqref{eq: Maximality}. An elementary induction argument ensures that \eqref{eq: Maximality} holds for all $R>0$ (see Proposition 3.14 from \cite{EN}).
\end{proof}

\section{Growth bounds for the dissipative operators}
\label{sec: 6}

In this section, we recall some classical facts on growth bounds for compact perturbations of maximal accretive operators. We realize the linearized operator defined on the real vector space from previous sections as real operator on the corresponding complex space. This is essential in the spectral theory of the linearized operator.\\

In this section, $(H,\langle \cdot,\cdot\rangle)$ is a Hilbert space and $A$ a closed operator defined on a dense domain $D(A)$. Define the adjoint operator $A^*$ on the domain
$$
D(A^*)=\{X\in H\,|\, Y\in D(A)\mapsto \langle X,AY\rangle \emph{ extends to an element of }H^*\}
$$
to be $X\mapsto A^*X$ the unique element of $H$ given by Riesz theorem such that
$$
\forall Y\in D(A),\quad \langle A^* X,Y\rangle =\langle X, AY\rangle.
$$
Denote by 
$$
\Lambda_\nu(A)=\{\lambda\in \sigma(A)\,|\,\Re(\lambda)\ge\nu\},\quad V_\nu(A)=\bigoplus_{\lambda\in \Lambda_\nu(A)}\ker(A-\lambda).
$$

\begin{lemma}[Perturbative exponential decay]
\label{lem: Exponential Decay}\
Let $T_0$ and $T$ be the strongly continuous semigroup generated by a maximal dissipative operator $A_0$ and $A=A_0+K$ where $K$ is a compact operator on $H$. Then for all $\nu>0$, the following holds:\\\\
\emph{(i)} The set $\Lambda_\nu (A)$ is finite and each eigenvalue $\lambda\in\Lambda_\nu(A)$ has finite algebraic multiplicity $k_\lambda$.\\\\
We have $\Lambda_\nu(A)=\overline{\Lambda_\nu(A^*)}$ and $\dim V_\nu(A^*)=\dim V_\nu(A)$. The direct sum decomposition
$$
H=V_\nu(A)\bigoplus V_\nu^\bot(A^*)
$$
is preserved by $T(s)$ and there holds
$$
\forall X\in V_\nu^\bot(A^*),\quad \Vert T(s) X\Vert\le M_\nu e^{\nu s}\Vert X\Vert.
$$
\emph{(iii)} The restriction of $A$ to $V_\nu(A)$ is given by a direct sum of Jordan blocks. Each block corresponds to an invariant subspace $J_\lambda$ and the semigroup $T$ restricted to $J_\lambda$ is given by 
$$
T(s)|_{J_\lambda}=
\begin{pmatrix}
e^{\lambda s} & s e^{\lambda s} &\cdots & \tfrac{s^{m_\lambda-1}e^{\lambda s}}{(m_\lambda -1)!}\\
0 & e^{\lambda s}& \cdots &\tfrac{s^{m_\lambda-2} e^{\lambda s}}{ (m_\lambda -2)!}\\
\vdots & \vdots & \ddots & \vdots\\
0 & 0 & \cdots & e^{\lambda s}
\end{pmatrix}
$$
where $m_\lambda$ is the geometric multiplicity of the eigenvalue $\lambda$.
\end{lemma}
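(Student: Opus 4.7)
The plan is to deploy classical semigroup perturbation theory: analytic Fredholm theory to locate the unstable spectrum, Riesz spectral projections to decompose $H$, a Gearhart--Pr\"uss argument to obtain exponential decay on the stable part, and finite-dimensional linear algebra on the unstable part.

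First I would prove (i). Since $A_0$ is maximal dissipative, the resolvent $R_0(\lambda):=(\lambda-A_0)^{-1}$ exists on $\{\mathrm{Re}\,\lambda>0\}$ with $\|R_0(\lambda)\|\le(\mathrm{Re}\,\lambda)^{-1}$. I would factorise
$$\lambda-A=(\lambda-A_0)\bigl(I-R_0(\lambda)K\bigr),$$
so that $\lambda\in\sigma(A)\cap\{\mathrm{Re}\,\lambda\ge\nu\}$ precisely when $I-R_0(\lambda)K$ fails to be invertible. Compactness of $K$ and analyticity of $R_0$ make $\lambda\mapsto R_0(\lambda)K$ an analytic family of compact operators, and the analytic Fredholm theorem then yields discreteness of the exceptional set, provided it is not all of the half plane; the latter is ruled out by taking $|\lambda|$ large so that $\|R_0(\lambda)K\|<1$ and a Neumann series converges. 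A uniform large-$|\lambda|$ bound confines $\Lambda_\nu(A)$ to a compact set, so discreteness upgrades to finiteness. Finite algebraic multiplicity for each eigenvalue follows from the standard Fredholm alternative.

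For (ii), I would introduce the Riesz spectral projection
$$\mathcal Q=\frac{1}{2\pi i}\oint_\Gamma (\lambda-A)^{-1}\,d\lambda,$$
where $\Gamma$ encloses exactly $\Lambda_\nu(A)$. By (i) this is well defined, commutes with $A$ (and hence with $T(s)$), and yields the splitting $H=\mathrm{Range}(\mathcal Q)\oplus\ker(\mathcal Q)$. Identifying $\mathrm{Range}(\mathcal Q)$ with $V_\nu(A)$ is standard; identifying $\ker(\mathcal Q)$ with $V_\nu^\perp(A^*)$ uses $\ker(\mathcal Q)=\mathrm{Range}(\mathcal Q^*)^\perp$ together with the fact that the adjoint contour integral is the Riesz projector of $A^*$ onto $V_\nu(A^*)$. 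The relations $\sigma(A^*)=\overline{\sigma(A)}$ and preservation of algebraic multiplicities under adjunction give $\Lambda_\nu(A)=\overline{\Lambda_\nu(A^*)}$ and $\dim V_\nu(A)=\dim V_\nu(A^*)$.

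The analytic heart of the proof is the exponential bound on $V_\nu^\perp(A^*)$, for which I would invoke the Gearhart--Pr\"uss theorem: in a Hilbert space, the restricted semigroup has growth bound at most $\nu$ provided the resolvent of $A|_{V_\nu^\perp(A^*)}$ is uniformly bounded on the vertical line $\{\mathrm{Re}\,\lambda=\nu\}$. On a bounded window $|\mathrm{Im}\,\lambda|\le M$, continuity of the resolvent (after removing the spectral projection) supplies the bound; on $|\mathrm{Im}\,\lambda|>M$, one controls $(I-R_0(\lambda)K)^{-1}$ via Neumann series once $\|R_0(\lambda)K\|<\tfrac12$, using the classical fact that $\|R_0(\lambda)K\|\to 0$ as $|\mathrm{Im}\,\lambda|\to\infty$ when $K$ is compact and $R_0$ is a maximal dissipative resolvent (obtained by finite-rank approximation of $K$). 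Part (iii) is then immediate: $V_\nu(A)$ is finite dimensional and $A$-invariant, so $A|_{V_\nu(A)}$ admits Jordan normal form, and the matrix exponential on each Jordan block gives exactly the upper triangular structure claimed. The main obstacle is the large-$|\mathrm{Im}\,\lambda|$ uniform resolvent bound required by Gearhart--Pr\"uss; this is where compactness of $K$ is used in an essential rather than formal way, and it is the step most in need of care in a non-selfadjoint infinite-dimensional setting.
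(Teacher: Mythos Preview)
Your proposal is correct and follows the standard route (analytic Fredholm for the discrete unstable spectrum, Riesz projections for the splitting, Gearhart--Pr\"uss for the decay on the stable part, Jordan form on the finite-dimensional unstable part). The paper itself does not give a proof but simply cites Lemma~3.9 of \cite{15} (Merle--Rapha\"el--Rodnianski--Szeftel), where the argument is carried out along essentially the same lines you outline; so you have in fact written out more than the paper does. One small point: the paper's definition $V_\nu(A)=\bigoplus_{\lambda\in\Lambda_\nu(A)}\ker(A-\lambda)$ is sloppy---for the Jordan block description in (iii) and for the direct sum decomposition to work one needs generalized eigenspaces $\ker(A-\lambda)^{k_\lambda}$, which is exactly what your Riesz projector produces, so your reading is the right one.
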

\begin{proof}
See Lemma 3.9 of \cite{MRRS}.
\end{proof}

\begin{corollary}[Exponential decay modulo finitely many instabilities]
\label{cor: Exponential Decay 1}
Let $\nu>0$, $T_0$, $T$ be the strongly continuous semigroup generated by a maximal dissipative operator $A_0$ and $A=A_0-\nu+K$ respectively where $K$ is a compact operator on Hilbert space $H$. Then $\Lambda_0(A)$ is finite and let
$$
H=U\bigoplus V
$$
where $U$ and $V$ are invariant subspaces for $A$ and $V$ is the image of the spectral projection of $A$ for the set $\Lambda_\nu(A)$. Then there exists $C, \delta>0$ such that
$$
\forall X\in U,\quad \Vert T(s)X\Vert \le Ce^{-\frac{\delta}{2}s}\Vert X\Vert.
$$
\end{corollary}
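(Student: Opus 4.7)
The strategy is to deduce Corollary~\ref{cor: Exponential Decay 1} directly from Lemma~\ref{lem: Exponential Decay} by absorbing the $-\nu$ shift. Set $\tilde A := A_0 + K$, so that $A = \tilde A - \nu$. Since $A_0$ is maximal dissipative and $K$ is compact, Lemma~\ref{lem: Exponential Decay} applies to $\tilde A$ at every positive level. Writing $\tilde T$ for the semigroup generated by $\tilde A$, the two semigroups are related by $T(s) = e^{-\nu s}\tilde T(s)$ and the spectra by $\sigma(A) = \sigma(\tilde A) - \nu$; in particular $\Lambda_0(A) = \Lambda_\nu(\tilde A)$.

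Applying Lemma~\ref{lem: Exponential Decay}(i) to $\tilde A$ at level $\nu/2 > 0$ yields that $\Lambda_{\nu/2}(\tilde A)$ is finite. Since $\Lambda_\nu(\tilde A) \subset \Lambda_{\nu/2}(\tilde A)$, the set $\Lambda_0(A) = \Lambda_\nu(\tilde A)$ is a finite collection of isolated eigenvalues with finite algebraic multiplicity, giving the finiteness statement of the corollary.

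Next, since $\Lambda_{\nu/2}(\tilde A)$ is finite, I may choose $\delta \in (0,\nu)$ small enough that no eigenvalue of $\tilde A$ has real part in the half-open interval $[\nu - \delta/2,\nu)$; only finitely many eigenvalues could obstruct this, so such a $\delta$ exists. With this choice, $V_{\nu - \delta/2}(\tilde A) = V_\nu(\tilde A) =: V$, and Lemma~\ref{lem: Exponential Decay}(ii) applied at level $\nu - \delta/2$ produces the direct sum decomposition
$$H = V \oplus U, \qquad U := V_{\nu - \delta/2}^\perp(\tilde A^*),$$
both summands being invariant under $\tilde A$, hence also under $A = \tilde A - \nu$.

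Finally, Lemma~\ref{lem: Exponential Decay}(ii) gives a constant $M > 0$ such that $\Vert \tilde T(s)X \Vert \le M e^{(\nu - \delta/2)s}\Vert X\Vert$ for every $X \in U$, and undoing the shift,
$$\Vert T(s) X\Vert = e^{-\nu s}\Vert \tilde T(s) X\Vert \le M e^{-\delta s/2}\Vert X\Vert,$$
which is the claimed estimate with $C := M$ and the stated exponent $\delta/2$. The single delicate step is the choice of $\delta$: one must check that shrinking the level from $\nu$ to $\nu - \delta/2$ does not pick up additional eigenvalues, which is immediate from the finiteness of $\Lambda_{\nu/2}(\tilde A)$; everything else is routine shift bookkeeping between $A$ and $\tilde A$.
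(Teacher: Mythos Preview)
Your proof is correct and slightly slicker than the paper's. Both begin by shifting to $\tilde A = A_0 + K$ and invoking Lemma~\ref{lem: Exponential Decay}, but the paper applies it at the fixed level $\nu/4$, which produces a spectral subspace $V_\nu \supset V$ strictly larger than the one in the statement. The paper must then split $U = U_\nu \oplus O_\nu$, where $O_\nu$ is the finite-dimensional span of the generalized eigenvectors for eigenvalues of $A$ with $-\tfrac{3\nu}{4} \le \Re(\lambda) < 0$, and handle $O_\nu$ separately via the Jordan block description in Lemma~\ref{lem: Exponential Decay}(iii), taking $\delta$ to be the resulting spectral gap. Your approach instead chooses the level $\nu - \delta/2$ adaptively so that $V_{\nu-\delta/2}(\tilde A) = V$ exactly, which lets a single application of Lemma~\ref{lem: Exponential Decay}(ii) deliver both the correct decomposition and the decay on all of $U$ at once, bypassing part~(iii) entirely. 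The paper's route makes the dependence of $\delta$ on the spectrum slightly more explicit; yours is shorter and uses less of the lemma.
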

\begin{proof}
We apply Lemma \ref{lem: Exponential Decay} to $\tilde A=A_0+K$ which generates the semigroup $\tilde T$. Note that $\Lambda_{\frac{\nu}{4}}(\tilde A)$ is finite and $\Lambda_0(A)\subset \Lambda_{\frac{\nu}{4}}(\tilde A)$. Let 
$$ 
H=U_\nu\bigoplus V_\nu
$$
be the invariant decomposition of $\tilde A$ associated to the set $\Lambda_{\frac{\nu}{4}}$ with $V_\nu$ being the image of the spectral projection of the set $\Lambda_{\frac{\nu}{4}}$. Then $U_\nu\subset U$ and 
$$
U=U_\nu\bigoplus O_\nu
$$
where $O_\nu$ is the image of the spectral projection of $A$ associated with the set $\Lambda_{\frac{\nu}{4}}(\tilde A)\setminus \Lambda_0(A)$. Then by Lemma \ref{lem: Exponential Decay},
$$
\forall X\in U_\nu,\quad \Vert T(s) X\Vert = e^{-\nu s}\Vert \tilde T(s)X\Vert\le M_\nu e^{-\frac{3\nu}{4}s}\Vert X\Vert.
$$
Now for $X\in U$, since $U_\nu$ is invariant under $T$ and we have exponential decay on $U_\nu$, so without loss of generality, assume $X\in O_\nu$. $O_\nu$ is an invariant subspace of $A$ generated by the eigenvalues $\lambda$ such that $-\frac{3\nu}{4}\le \Re(\lambda)<0$. Then for 
$$
\delta = \inf\Big\{\Re(\lambda)\,|\,0<-\Re(\lambda)\le\frac{3\nu}{4}\Big\}
$$
Lemma \ref{lem: Exponential Decay} implies that
$$
\Vert T(s)X\Vert_{O_\nu}\lesssim \sup_{\Re(\lambda)<0} e^{\lambda s} s^{m_\lambda-1}\Vert X \Vert \le e^{-\frac{\delta}{2}s}\Vert X\Vert.
$$
\end{proof}

\begin{corollary}
\label{cor: Exponential Decay 2}
Let $A$, $\delta$, $U$ and $V$ as in \emph{Corollary \ref{cor: Exponential Decay 1}}. For $c$, $s_0>0$, let $G(s)\in V$ such that
$$
\Vert G\Vert\le e^{-\frac{\delta}{2}(1+c)s}.
$$
If $X(s)$ solves
$$
\frac{dX(s)}{ds}=AX(s)+G(s),\quad X(s_0)=x\in V
$$
for some $\Vert x\Vert\le e^{-\frac{\delta}{2}(1+\frac{c}{2})s_0}$, then
\begin{equation}
\label{eq: Exponential Decay}
\Vert X(s)\Vert\le e^{-\frac{\delta}{2}s},\quad s_0\le s\le s_0+\Gamma_{A,s_0}
\end{equation}
where $\Gamma_{A,s_0}$ can be made arbitrarily large by a choice of $s_0$. Moreover, there exists $x\in V$, $\Vert x\Vert\le e^{-\frac{\delta}{2}(1+\frac{c}{2})s_0}$ such that for all $s\ge s_0$,
$$
\Vert X(s)\Vert \le e^{-\frac{\delta}{2}(1+\frac{c}{2})s}.
$$
\end{corollary}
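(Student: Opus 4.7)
The whole analysis reduces to a finite dimensional linear ODE on $V$. Indeed $V$ is finite dimensional and invariant under $A$ by \emph{Corollary \ref{cor: Exponential Decay 1}}, and setting $B:=A|_V$, \emph{Lemma \ref{lem: Exponential Decay}}(iii) realizes $B$ as a direct sum of Jordan blocks whose eigenvalues all satisfy $\Re(\lambda)\ge 0$. In particular,
$$
\|e^{Bs}\|\lesssim P(|s|)\,e^{\lambda_*\, s_+},\qquad \lambda_*:=\max_{\lambda\in \sigma(B)}\Re(\lambda)\ge 0,
$$
with $P$ a polynomial depending only on the Jordan structure of $B$.

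For the first bound, I would write the Duhamel formula
$$
X(s)=e^{B(s-s_0)}x+\int_{s_0}^s e^{B(s-\tau)}F(\tau)\,d\tau,
$$
plug in the hypotheses $\|x\|\le e^{-\frac{\delta}{2}(1+\frac{c}{2})s_0}$ and $\|F(\tau)\|\le e^{-\frac{\delta}{2}(1+c)\tau}$, and estimate each term directly. A routine computation (the forcing integral is dominated by its boundary contribution at $\tau=s$ and decays faster than the homogeneous term) gives
$$
\|X(s)\|\lesssim P(s-s_0)\,e^{\lambda_*(s-s_0)-\frac{\delta}{2}(1+\frac{c}{2})s_0}.
$$
Requiring the right hand side to be $\le e^{-\frac{\delta}{2}s}$ yields the admissible interval $s-s_0\le \Gamma_{A,s_0}$ with
$$
\Gamma_{A,s_0}\approx \frac{c\delta/4}{\lambda_*+\delta/2}\,s_0
$$
up to logarithmic losses from $P$, which grows to infinity with $s_0$.

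For the second assertion I would use a shift of unknown. Set $\beta:=\frac{\delta}{2}(1+\frac{c}{2})$ and $Y(s):=e^{\beta s}X(s)$, so that
$$
\frac{dY}{ds}=B_\beta\,Y+h(s),\qquad B_\beta:=B+\beta I,\qquad h(s):=e^{\beta s}F(s),
$$
with $\|h(s)\|\le e^{-\frac{c\delta}{4}s}$. By construction $\sigma(B_\beta)\subset\{\Re(\lambda)\ge \beta>0\}$, so $-B_\beta$ is strictly dissipative on $V$ and $\|e^{-B_\beta t}\|\lesssim (1+t^N)e^{-\beta t}$ for $t\ge 0$. The backward Duhamel integral
$$
Y(s):=-\int_s^\infty e^{B_\beta(s-\tau)}h(\tau)\,d\tau
$$
therefore converges absolutely, defines the unique forward bounded solution of the $Y$-equation, and satisfies $\|Y(s)\|\lesssim e^{-\frac{c\delta}{4}s}$. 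The choice $x:=X(s_0)=e^{-\beta s_0}Y(s_0)\in V$ gives both $\|x\|\lesssim e^{-\beta s_0-\frac{c\delta}{4}s_0}\le e^{-\beta s_0}$ and $\|X(s)\|=e^{-\beta s}\|Y(s)\|\lesssim e^{-\beta s-\frac{c\delta}{4}s}\le e^{-\beta s}$ for $s\ge s_0$, once $s_0$ is large enough to absorb the implicit constants.

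The only technical annoyance is bookkeeping of the polynomial Jordan factors $P$, but these contribute only logarithmic losses which are swallowed either by the linear growth of $\Gamma_{A,s_0}$ in $s_0$ (first assertion) or by the extra exponential gain $e^{-\frac{c\delta}{4}s}$ (second assertion). No infinite-dimensional spectral input is needed beyond the invariant splitting furnished by \emph{Corollary \ref{cor: Exponential Decay 1}}.
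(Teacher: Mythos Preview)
Your first assertion is handled essentially as in the paper: Duhamel on the Jordan block representation, then balance the exponential growth $e^{\lambda_*(s-s_0)}$ against the extra decay $e^{-\frac{c\delta}{4}s_0}$ in the data to get $\Gamma_{A,s_0}\sim \frac{c\delta/4}{\lambda_*+\delta/2}\,s_0$.

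For the second assertion your route is genuinely different. The paper conjugates by $e^{-sN}e^{\frac{\delta}{2}(1+\frac{3c}{4})s}$ to reduce to a scalar equation $Y'=[\lambda+\frac{\delta}{2}(1+\frac{3c}{4})]Y+\tilde F$ with $\Re(\lambda)\ge 0$, and then runs a Brouwer fixed point argument on the unit ball $\{\|y\|\le 1\}$: if no initial datum keeps $\|Y(s)\|\le 1$ for all $s\ge s_0$, the exit map $y\mapsto Y(s^*)$ would be a continuous retraction of the ball onto its sphere, contradiction. Your argument instead shifts by $\beta=\frac{\delta}{2}(1+\frac{c}{2})$, observes that $B_\beta=B+\beta I$ has spectrum in $\{\Re\ge\beta>0\}$, and writes down the unique bounded forward solution explicitly as the backward Duhamel integral $Y(s)=-\int_s^\infty e^{B_\beta(s-\tau)}h(\tau)\,d\tau$. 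This is correct and arguably cleaner in the purely linear setting of the corollary as stated. The trade-off is that your construction needs $F(\tau)$ to be given and to satisfy the decay bound for \emph{all} $\tau\ge s_0$; in the paper's nonlinear application (proof of Theorem~\ref{theo: Result 2}) one only knows $\|F\|\le e^{-\frac{\delta}{2}(1+c)s}$ on the bootstrap interval $[s_0,s^*]$, with $F=PG(X)$ depending on the unknown, and there the Brouwer argument---which requires only the local outgoing condition $\frac{d}{ds}\|Y\|^2>0$ at the exit time---is what actually closes the loop.
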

\begin{proof}
By Lemma \ref{lem: Exponential Decay}, the subspace $V$ can be further decomposed into invariant subspaces on which $A$ is represented by Jordan blocks. Therefore, without loss of generality, assume that $V$ is irreducible and for $\Re(\lambda)\ge0$,
\begin{equation}
\label{eq: Jordan Block}
A=\lambda+N,\quad e^{sN}=
\begin{pmatrix}
1 & s& \cdots & \tfrac{s^{m_\lambda-1}}{(m_\lambda -1)!}\\
0 & 1 & \cdots & \tfrac{s^{m_\lambda-2}}{(m_\lambda -2)!}\\
\vdots & \vdots &\ddots& \vdots\\
0 & 0 & \cdots & 1
\end{pmatrix}.
\end{equation}
Then from the growth bound on the Jordan block, we infer, for all $s_0\le s\le s_0+\Gamma$ that
$$
\begin{aligned}
\Vert X(s)\Vert&= \bigg\Vert e^{(s-s_0)A}x+\int_{s_0}^se^{(s-\tau)A}G(\tau)\,d\tau\bigg\Vert \\
&\lesssim  \Gamma ^{m_\lambda-1}e^{\Re(\lambda)\Gamma}e^{-\frac{\delta}{2}(1+\frac{c}{2})s_0}+\int_{s_0}^s |\tau-s_0|^{m_\lambda-1}e^{\Re(\lambda)(s-\tau)}e^{-\frac{\delta}{2}(1+c)\tau}\,d\tau\\
&\lesssim  \Gamma^{m_\lambda-1}e^{\Re(\lambda)\Gamma}e^{-\frac{\delta}{2}(1+\frac{c}{2})s_0}.
\end{aligned}
$$
Hence \eqref{eq: Exponential Decay} follows by choosing $\Gamma$ such that
$$
\Gamma^{m_\lambda-1}e^{\Re(\lambda)\Gamma}e^{-\frac{\delta}{2}(1+\frac{c}{2})s_0}\le e^{-\frac{\delta}{2}(s_0+\Gamma)},
$$
a sufficient condition being
$$
\Gamma\le \frac{s_0}{2}\bigg[\frac{c\,\delta}{2\Re(\lambda)+\delta}\bigg].
$$
Now consider
$$
Y(s)= e^{-sN}e^{\frac{\delta}{2}(1+\frac{3c}{4})s}X(s),\quad \tilde G(s)=e^{-sN}e^{\frac{\delta}{2}(1+\frac{3c}{4})s}G(s).
$$
Then since $N$ and $A$ commute,
$$
\frac{dY(s)}{ds}=\bigg[\lambda+\frac{\delta}{2}\bigg(1+\frac{3c}{4}\bigg)\bigg]Y(s)+\tilde G(s),\quad Y(s_0)=y.
$$
For $s_0$ sufficiently large, for all $s\ge s_0$,
$$
\Vert\tilde G(s)\Vert \le e^{-\frac{c\,\delta}{16}s}.
$$
We now run a standard Brouwer type argument for $Y$. For $\Vert y \Vert \le 1$, define the exit time
$$
s^*=\inf\{s\ge s_0\,|\, \Vert Y(s)\Vert\ge1\}.
$$
If $s^*=\infty$ for some $\Vert y\Vert \le 1$, then we're done. Otherwise, the map $\Phi:B=\{\Vert y\Vert \le1\}\rightarrow S=\{\Vert y\Vert =1\}$ given by $\Phi(y)=Y(s^*)$ is well-defined. Note that $\Phi|_{S}=\id_S$ and $\Phi$ is continuous since
$$
\frac{d\Vert Y\Vert^2}{ds}(s^*)=2\Re(\lambda)+\delta\bigg(1+\frac{3c}{4}\bigg)+2\Re\langle \tilde G(s^*),Y(s^*)\rangle\ge \frac{\delta}{2}\bigg(1+\frac{3c}{4}\bigg)>0
$$
i.e. the outgoing condition is met. This is a contradiction by Brouwer fixed point theorem. Thus, there exists $x$ such that for all $s\ge s_0$, 
$$
\Vert e^{-sN}X(s)\Vert \le e^{-\frac{\delta}{2}(1+\frac{3c}{4})s}.
$$
Since $e^{-sN}$ is invertible with inverse $e^{sN}$ bounded by $s^{m_\lambda-1}$, result follows immediately. 
\end{proof}

\section{Finite codimensional stability} 
\label{sec: Bootstrap}

We are now in position to prove non linear finite codimensional stability of the self-similar profiles for the full problem.\\

\noindent\underline{\emph{Choice of parameters}}. In this section, we set $d=3$ and $k=3$ so that $H^{k+1}(\mathbb R^3)$ is an algebra which we shall later use in the proof of \emph{Theorem \ref{theo: Result 2}}. For convenience, we write 
$$
\mathbb H=\mathbb H_3:= H_4\times H_3.
$$
where we recall from Section \ref{sec: Notation} the definition of $H_k$.\\\\
 \underline{\emph{Stable and Unstable supspaces}}. Recall from \emph{Proposition \ref{prop: Maximal Dissipativity}} that $\mathcal M-\mathcal P+\frac{c_k}{2}$ is maximal dissipative so \emph{Corollary \ref{cor: Exponential Decay 1}} applies:
$$
\Lambda_0(\mathcal M)=\{\lambda\in \sigma(\mathcal M)\,|\, \Re(\lambda)\ge 0\}
$$
is a finite set with an associated finite dimensional invariant subspace $V$. Consider the invariant decomposition
$$
\mathbb H=U\bigoplus V
$$
and let $P$ be the associated projection on $V$. We denote by $\mathcal N$ the nilpotent part of the matrix representing $\mathcal M$ on $V$. Let $\delta>0$ such that the conclusions of \emph{Corollary \ref{cor: Exponential Decay 1}} and \emph{\ref{cor: Exponential Decay 2}} hold.\\\\
 \underline{\emph{Dampened profile}}. We produce a finite energy initial value by dampening the tail of the self-similar profiles on $|x|\ge 1$: for some large constant $n_p$, let $\eta:\mathbb R_+\rightarrow\mathbb R$ be a smooth function
\begin{equation}
\label{eq: eta}
\eta(r)=\begin{cases} 
1 & r\le1 \\
r^{-n_p} & r\ge 2\end{cases}
\end{equation}
and define the dampened profile
$$
u_n^D(s,\rho)=\eta(e^{-s}\rho)u_n(\rho).
$$
We introduce the perturbation variables $(\Psi^D, \Omega^D)$:
$$
\tilde \Psi=\Psi+u_n=\Psi^D+\underbrace{\eta(e^{-s}\rho)u_n}_{=u_n^D},\quad \Omega-\Lambda u_n=\Omega^D-\eta(e^{-s}\rho)\Lambda u_n.
$$
Then the wave equation \eqref{eq: Transformed Nonlinear Wave} yields 
\begin{equation}
\label{eq: Wave Equation for Perturbation}
\begin{cases}
\partial_s\Psi^D=-\Lambda \Psi^D-\Omega^D\\
\partial_s\Omega^D=-\Delta\Psi^D-(\Lambda+1)\Omega^D-|\tilde \Psi|^{p-1}\tilde \Psi+\mathcal E(s,\rho)
\end{cases}
\end{equation}
where
\begin{equation}
\label{eq: E}
\mathcal E(s,\rho)= \eta(e^{-s}\rho) u_n^p-(\Delta\eta(e^{-s}\rho))u_n-2\nabla\eta(e^{-s}\rho) \cdot\nabla u_n.
\end{equation}

\subsection{Bootstrap bound and proof of Theorem \ref{theo: Result 2} }

The heart of the proof of Theorem \ref{theo: Result 2} is the following bootstrap proposition.

\begin{proposition}[Bootstrap]
\label{prop: Bootstrap}
Recall the definition of $H_k$ from Section \ref{sec: Notation}. Assume $d=3$, $k=3$ and write $$
\mathbb H=\mathbb H_3= H_4\times H_3.
$$
Given $c\ll 1$ and $s_0\gg1$ to be chosen in the proof, consider $X(s_0)\in \mathbb H$ such that 
\begin{equation}
\label{eq: Choice of Initial Value 1}
\Vert (I-P)X(s_0)\Vert_{\mathbb H}\le e^{-\frac{\delta}{2}s_0}, \quad \Vert PX(s_0)\Vert_{\mathbb H}\le e^{-\frac{\delta}{2}(1+\frac{c}{2})s_0}
\end{equation}
and for all $0\le j\le 4$,
\begin{equation}
\label{eq: Choice of Initial Value 2}
\bigg\Vert \frac{\langle\rho\rangle^{j+1}\nabla ^j\Psi^D(s_0)}{u_n^D}\bigg\Vert_{L^\infty(\mathbb R^3)}+\bigg\Vert \frac{\langle\rho\rangle^{j+1}\mathbbm 1_{j\ge1}\nabla ^{j-1}\Omega^D(s_0)}{u_n^D}\bigg\Vert_{L^\infty(\mathbb R^3)}\le e^{- \frac{\delta}{2}s_0}
\end{equation}
Define the exit time $s^*$ to be the maximal time such that the following bootstrap bounds hold on $s\in [s_0,s^*]$:
\begin{equation}
\label{eq: Bootstrap Bound 1}
\Vert e^{s\mathcal N}PX(s)\Vert_{\mathbb H}\le e^{-\frac{\delta}{2}(1+\frac{3c}{4})s},
\end{equation}
for $j=0, 1$ and $\kappa<\frac{1}{4(p+1)}$, 
\begin{equation}
\label{eq: Bootstrap Bound 2}
\bigg\Vert \frac{\rho^{j-\kappa}\nabla ^j\Psi^D(s)}{u_n^D}\bigg\Vert_{L^\infty(|y|\ge1)}\le 1,
\end{equation}
for all $0\le j \le 4$,
\begin{equation}
\label{eq: Bootstrap Bound 3}
I_j(s):=\int_{|y|\ge 1} \rho^{2j-2s_c}\xi(e^{-s}\rho)^{2n_p+1}\Big(|\nabla^j\Psi^D(s)|^2+\mathbbm 1_{j\ge1}|\nabla^{j-1}\Omega^D(s)|^2\Big)\,dy\le1
\end{equation}
where
$$
 \xi(r)=\eta(r)^{-\frac{1}{n_p}}=\begin{cases} 
1 & r\le1 \\
r & r\ge 2\end{cases}
$$
and for $\frac{\delta}{1+c}<\delta_0<\delta$,
\begin{equation}
\label{eq: Bootstrap Bound 4}
\Vert X(s)\Vert_{\mathbb H}\le e^{-\frac{\delta_0}{2}s}.
\end{equation}
Then the bootstrap bounds \eqref{eq: Bootstrap Bound 2}, \eqref{eq: Bootstrap Bound 3} and \eqref{eq: Bootstrap Bound 4} can be strictly improved in $s\in[s_0,s^*]$. Equivalently, if $s^*<\infty$, then equality holds for \eqref{eq: Bootstrap Bound 1} at $s=s^*$. Furthermore, the following non-linear bound holds:
\begin{equation}
\label{eq: Nonlinear Bound 1}
\forall s\in [s_0,s^*],\quad \Vert G(s)\Vert_{\mathbb H}\le e^{-\frac{\delta}{2}(1+c)s}.
\end{equation}
\end{proposition}

Let us assume Proposition \ref{prop: Bootstrap} and conclude the proof of Theorem \ref{theo: Result 2}.\\

\begin{proof}[proof of (Proposition \ref{prop: Bootstrap} $\Rightarrow$ Theorem \ref{theo: Result 2})]
\label{proof: Result 2} Assume Proposition \ref{prop: Bootstrap} holds. Let $s_0$ be as in Proposition \ref{prop: Bootstrap}. Note that the bootstrap bounds \eqref{eq: Bootstrap Bound 3} and \eqref{eq: Bootstrap Bound 4} imply
$$
\int_{\mathbb R^3} |\Psi^D|^2\,dy \le \int_{|y|\le1} |\Psi|^2\,dy+ \int_{|y|\ge1} \rho^{-2s_c+2n_p+1} |\Psi^D|^2\,dy<\infty
$$
and
$$
\int_{\mathbb R^3} |\Delta^2\Psi|^2\,dy<\infty.
$$
Then
$$
\Vert \tilde\Psi\Vert_{H^4(\mathbb R^3)}^2\le \Vert u_n^D\Vert_{L^2(\mathbb R^3)}^2+\Vert \Psi^D\Vert_{L^2(\mathbb R^3)}^2+\Vert u_n\Vert_{\dot H^4(\mathbb R^3)}^2+\Vert\Psi\Vert_{\dot H^4(\mathbb R^3)}^2<\infty.
$$\\
Similarly for $\Omega^D$. Thus, we infer
$$
\Vert u_n+\Psi\Vert_{H^4(\mathbb R^3)}+\Vert\Lambda u_n-\Omega\Vert_{H^3(\mathbb R^3)}\le C(s)
$$
for $s\in [s_0,s^*]$ so it follows that
$$
\Vert \Phi\Vert_{\dot H^{s_c}(\mathbb R^3)}+\Vert \partial_t\Phi\Vert_{\dot H^{s_c-1}(\mathbb R^3)}\le C(t)
$$
so the bootstrap time is strictly smaller than the life time provided by the standard Cauchy theory (see \cite{GX}).\\\\
We now conclude from the Brouwer fixed point argument. Note that for all initial data satisfying \eqref{eq: Choice of Initial Value 1} and \eqref{eq: Choice of Initial Value 2} in the space
$$
H=\bigg\{(\Psi^D,\Omega^D)\in (H^4\times H^3)(\mathbb R^3)\bigg|\, \sum_{j=0}^4\left\Vert \langle\rho\rangle^{j+\alpha+n_p+1}\Big(\left|\nabla^j\Psi^D\right|+\left|\mathbbm 1_{j\ge1}\nabla^{j-1}\Omega^D\right|\Big)\right\Vert_{L^\infty}<\infty\bigg\}.
$$
the non-linear bound \eqref{eq: Nonlinear Bound 1} and \eqref{eq: Bootstrap Bound 1} have been shown to hold on $[s_0,s^*]$. Then by Corollary \ref{cor: Exponential Decay 2}, $s^*\ge s_0+\Gamma$ for $\Gamma$ large. Moreover, as explained in the proof of Corollary \ref{cor: Exponential Decay 2}, given $(I-P)X(s_0)$, after a choice of projection of initial data on the subspace of unstable nodes $PX(s_0)$, the solution can be immediately propagated to any time $t<T$. This choice is dictated by Corollary \ref{cor: Exponential Decay 2}. Furthermore, this choice of $PX(s_0)$ is unique and is Lipschitz dependent on $(I-P)X(s_0)$ from Lemma \ref{lem: Lipschitz}.
\end{proof}

The rest of this section is devoted to the proof of the boostrap Proposition \ref{prop: Bootstrap}.


\subsection{Weighted Sobolev bounds}

Recall that we have set $d=3$, $k=3$. Then, we write $\mathbb H=\mathbb H_3$.

\begin{lemma}[Sobolev embedding]
\label{lem: L^infty to L^2 Non-radial}
Let $(\Psi^D,\Omega^D)$ be such that the right hand side of the bound \eqref{eq: L^infty to L^2 Non-radial} is finite. Then, for $j=0,1$,
\begin{equation}
\label{eq: L^infty to L^2 Non-radial}
\bigg\Vert \frac{\rho^{j-\kappa}\nabla ^j\Psi^D(s)}{u_n^D}\bigg\Vert_{L^\infty(|y|\ge1)}\lesssim \Vert\nabla^j\Psi^D\Vert_{L^\infty(|y|=2)}+\bigg(\sum_{l=0}^4I_l(s)\bigg)^{\frac{1}{2}}.
\end{equation}
\end{lemma}
\begin{proof}
\textbf{Step 1} (General bound):
 We recall the notations for the spherical harmonics from Section \ref{Sec: Dissipativity}. In particular, we write the spherical harmonic functions as $Y^{(l,m)}$ with eigenvalues $-\lambda_m=-m(m+1)$. We claim that given $i\in\mathbb N$ and $\beta\in\mathbb R$ and for all $f\in C_{c,\rad}^\infty(\mathbb R^3\setminus\{0\})$,
\begin{equation}
\label{eq: Bound for Spherical Harmonics}
\int_{\mathbb R^3} r^\beta|\nabla^i f(r)Y^{(l,m)}(\theta,\varphi)|^2\,dx = (1+o_{m\rightarrow\infty}(1))\underbrace{\sum_{j=0}^i \begin{pmatrix} i\\ j\end{pmatrix} \lambda_m^{i-j}\int_0^\infty r^{2+\beta+2(j-i)}|f^{(j)}|^2\,dr}_{:=S_{i,m}[f]}.
\end{equation}
We proceed by induction on $i$. Claim for $i=1,\,2$ is proved in Lemma 2.1 from \cite{CCF}. If claim holds for $i=2k-1,\,2k$, then by replacing $f$ in \eqref{eq: Bound for Spherical Harmonics} by $(\mathcal L-r^{-2}\lambda_m)f$ where we recall that $\mathcal L$ is the radial part of the Laplacian, we infer
$$
\begin{aligned}
&\int_{\mathbb R^3} r^\beta|\nabla^{i+2} f(r)Y^{(l,m)}(\theta,\varphi)|^2\,dx\\
=&(1+o_{m\rightarrow\infty}(1)) \sum_{j=0}^i \begin{pmatrix} i\\ j\end{pmatrix} \lambda_m^{i-j}\int_0^\infty r^{2+\beta+2(j-i)}|\partial_r^j(\partial_r^2+2r^{-1}\partial_r-r^{-2}\lambda_m)f|^2\,dr\\
=&\sum_{j=0}^i \begin{pmatrix} i\\ j\end{pmatrix} \lambda_m^{i-j}\int_0^\infty r^{2+\beta+2(j-i)}|(\partial_r^{j+2}-\lambda_mr^{-2}\partial_r^j)f|^2\,dr+o_{m\rightarrow\infty}(S_{i+2,m}[f])\\
=& \sum_{j=0}^i \begin{pmatrix} i\\ j\end{pmatrix} \lambda_m^{i-j}\int_0^\infty r^{2+\beta+2(j-i)}(|f^{(j+2)}|^2+2\lambda_m r^{-2}|f^{(j+1)}|^2+\lambda_m^2r^{-4}|f^{(j)}|^2)\,dr+o_{m\rightarrow\infty}(S_{i+2,m}[f])
\end{aligned}
$$
where in the last equality we have used integration by parts:
$$
\begin{aligned}
-&\lambda_m^{i-j+1}\int_0^\infty r^{\beta+2(j-i)}f^{(j+2)} f^{(j)}\,dr\\
=&\lambda_m^{i-j+1}\int_0^\infty r^{\beta+2(j-i)}|f^{(j+1)}|^2\,dr+C_{i,j,\beta}\lambda_m^{i-j+1}\int_0^\infty r^{-2+\beta+2(j-i)}|f^{(j)}|^2\,dr\\
=&\lambda_m^{i-j+1}\int_0^\infty r^{\beta+2(j-i)}|f^{(j+1)}|^2\,dr+o_{m\rightarrow\infty}(S_{i+2,m}[f]).
\end{aligned}
$$
Then, we infer
$$
\begin{aligned}
&\int_{\mathbb R^3} r^\beta|\nabla^{i+2} f(r)Y^{(l,m)}(\theta,\varphi)|^2\,dx\\
=&(1+o_{m\to\infty}(1)) \sum_{j=0}^{i+2} \left[\begin{pmatrix} i\\ j\end{pmatrix} +2\begin{pmatrix} i\\ j-1\end{pmatrix}+\begin{pmatrix} i\\ j-2\end{pmatrix}  \right]\lambda_m^{i+2-j}\int_0^\infty r^{2+\beta+2(j-i-2)}|f^{(j)}|^2\,dr
\end{aligned}
$$
Hence, the result follows for $i+2$. This concludes the proof of our claim \eqref{eq: Bound for Spherical Harmonics}.\\\\
\textbf{Step 2} (Interior Bound): From the claim, we have that for $M$ large, for all $f\in C_{c,\rad}^\infty(\mathbb R^3\setminus \{0\})$ and $m\ge M$,
$$
\sum_{j=0}^i\lambda_m^{i-j}\int_0^\infty \rho^{2j+2\alpha-1}|f^{(j)}|^2\,d\rho\lesssim_i \int_{\mathbb R^3}\rho^{2i+2\alpha-3}|\nabla^i f(\rho)Y^{(l,m)}|^2\,dx.
$$
Also, by induction on $i$, we have that for all $m<M$, 
\begin{equation}
\label{eq: Spherical Harmonics Decomposition}
\sum_{j=0}^i\int_0^\infty \rho^{2j+2\alpha-1}|f^{(j)}|^2\,d\rho\lesssim_i C_m\sum_{j=0}^i\int_{\mathbb R^3}\rho^{2j+2\alpha-3}|\nabla^j f(\rho)Y^{(l,m)}|^2\,dx.
\end{equation}
Thus, \eqref{eq: Spherical Harmonics Decomposition} holds for all $m\in \mathbb N$ with some universal constant independent of $m$. We now apply this to a function vanishing and $0$ and $\infty$.  Let $\chi_s\in C^\infty_{\rad}(\mathbb R^3)$ and $\varphi\in C^\infty(\mathbb R)$ be such that 
$$
\varphi(\rho)=\begin{cases} 0 & \rho\le 1\\ 1 & \rho\ge 2\end{cases},\quad
\chi_s(y)=\begin{cases}
\varphi(|y|) & |y| \le e^s\\
 1-\varphi(e^{-s}|y|)& |y|\ge e^s
\end{cases},
$$
Write 
$$
\Psi^D(y)=\sum_{l,m} \Psi_{l,m}^D(\rho) Y^{(l,m)}(\theta,\varphi),
$$
and apply \eqref{eq: Spherical Harmonics Decomposition} to $f(\rho)=\chi_s\Psi_{l,m}^D(\rho)$, we infer,
$$
\begin{aligned}
&\sum_{j=0}^{4}\lambda_m^{4-j}\int_2^{e^s} r^{2j+2\alpha-1}|\partial_\rho^j \Psi_{l,m}^D|^2\,d\rho\\
\le& \sum_{j=0}^{4}\lambda_m^{4-j}\int_0^\infty r^{2j+2\alpha-1}|\partial_\rho^j(\chi_s \Psi_{l,m}^D)|^2\,d\rho\\
\lesssim & \sum_{j=0}^4\int_{\mathbb R^3} \rho^{2j-2s_c}|\nabla^j(\chi_s\Psi_{l,m}^D(\rho)Y^{(l,m)}(\theta,\varphi))|^2dy\\
\lesssim & \sum_{j=0}^4\int_{|y|\ge1}\rho^{2j-2s_c}\xi(e^{-s}\rho)^{2n_p+1}|\nabla^j(\Psi_{l,m}^D(\rho)Y^{(l,m)}(\theta,\varphi))|^2\,dy
\end{aligned}
$$
where in the last inequality we have used that for all $e^s\le \rho\le e^{2s}$,
$$
|\partial_\rho^j\chi_s(\rho)|\lesssim_j e^{-js}\lesssim \rho^{-j}.
$$
Since the universal constant here does not depend on $m$, we sum over $l$ and $m$ to infer
$$
\sum_{l,m}\sum_{j=0}^{4}\lambda_m^{4-j}\int_2^{e^s} \rho^{2j+2\alpha-1}|\partial_\rho^j \Psi_{l,m}^D|^2\,d\rho \lesssim \sum_{j=0}^4 I_j(s).
$$
Note the universal $L^\infty$-bound for spherical harmonics which one can find in \cite{TZ} states that
$$
\Vert Y^{(l,m)}(\theta,\varphi)\Vert_{L^\infty(\mathbb S^2)}\lesssim \lambda_m^{\frac{1}{4}}.
$$
Thus, we infer for $2\le |y|\le e^s$,
$$
\begin{aligned}
&\bigg|\frac{\rho^{-\kappa}\Psi^D(y)}{u_n^D}\bigg|\lesssim \Vert \Psi^D\Vert_{L^\infty(|y|=2)}+\sum_{l,m}\Vert Y^{(l,m)}\Vert_{L^\infty(\mathbb S^2)}\int_2^{e^s}|\partial_\rho(\rho^{\alpha-\kappa}\Psi_{l,m}^D)|\,d\rho\\
\lesssim& \Vert \Psi^D\Vert_{L^\infty(|y|=2)}+\sum_{l,m}\lambda_m^{\frac{1}{4}}\bigg(\int_2^{e^s} \rho^{-1-2\kappa}\,d\rho\bigg)^{\frac{1}{2}}\bigg(\int_2^{e^s}\rho^{2\alpha-1}(|\Psi_{l,m}^D|^2+\rho^2|\partial_\rho\Psi_{l,m}^D|^2)\,d\rho\bigg)^{\frac{1}{2}}\\
\le& \Vert \Psi^D\Vert_{L^\infty(|y|=2)}+\bigg(\sum_{l,m} \lambda_m^{-\frac{3}{2}}\bigg)^{\frac{1}{2}}\bigg(\sum_{l,m}\lambda_m^2\int_2^{e^s}\rho^{2\alpha-1}(|\Psi_{l,m}^D|^2+\rho^2|\partial_\rho\Psi_{l,m}^D|^2)\,d\rho\bigg)^{\frac{1}{2}}\\
\lesssim &\Vert \Psi^D\Vert_{L^\infty(|y|=2)}+ \bigg(\sum_{l=0}^4I_l(s)\bigg)^{\frac{1}{2}}.
\end{aligned}
$$
Next, we bound the derivatives of $\Psi^D$. Explicit calculation of the derivatives of $Y^{(l,m)}$ yields
$$
\Vert \partial_\theta Y^{(l,m)}\Vert_{L^\infty(\mathbb S^2)}+\Vert \partial_\varphi Y^{(l,m)}\Vert_{L^\infty(\mathbb S^2)}\lesssim \lambda_m^{\frac{3}{4}}.
$$
Then, for $2\le |y|\le e^s$, by writing $(\tilde y_1,\tilde y_2,\tilde y_3)=(\rho,\theta,\varphi)$ and $(n_1,n_2,n_3)=(0,-1,-1)$, we infer
$$
\begin{aligned}
&\bigg|\frac{\rho^{1-\kappa}\nabla\Psi^D(y)}{u_n^D}\bigg|\lesssim \Vert \nabla\Psi^D\Vert_{L^\infty(|y|=2)}+\sum_{i=1}^3\int_2^{e^s}\sup_{\mathbb S^2}|\partial_\rho(\rho^{\alpha+1+n_i-\kappa}\partial_{\tilde y_i}(\Psi_{l,m}^D(\rho)Y^{(l,m)}))|\,d\rho\\
\lesssim & \Vert \nabla\Psi^D\Vert_{L^\infty(|y|=2)}+\underbrace{\sum_{l,m}\lambda_m^{\frac{1}{4}}\int_2^{e^s}|\partial_\rho(\rho^{\alpha+1-\kappa}\partial_\rho\Psi_{l,m}^D)|\,d\rho}_{=\partial_\rho \text{ term}}+\underbrace{\sum_{l,m}\lambda_m^{\frac{3}{4}}\int_2^{e^s}|\partial_\rho(\rho^{\alpha-\kappa}\Psi_{l,m}^D)|\,d\rho.}_{=\partial_\theta,\,\partial_\varphi\text{ terms}}
\end{aligned}
$$
Then, as before,
$$
\begin{aligned}
&(\partial_\rho \text{ term}) \lesssim\sum_{l,m}\lambda_m^{\frac{1}{4}}\bigg(\int_2^{e^s} \rho^{-1-2\kappa}\,d\rho\bigg)^{\frac{1}{2}}\bigg(\int_2^{e^s}\rho^{2\alpha+1}(|\partial_\rho\Psi_{l,m}^D|^2+\rho^2|\partial_\rho^2\Psi_{l,m}^D|^2)\,d\rho\bigg)^{\frac{1}{2}}\\
\le& \bigg(\sum_{l,m} \lambda_m^{-\frac{3}{2}}\bigg)^{\frac{1}{2}}\bigg(\sum_{l,m}\lambda_m^2\int_2^{e^s}\rho^{2\alpha+1}(|\partial_\rho\Psi_{l,m}^D|^2+\rho^2|\partial_\rho^2\Psi_{l,m}^D|^2)\,d\rho\bigg)^{\frac{1}{2}}\lesssim \bigg(\sum_{l=0}^4I_l(s)\bigg)^{\frac{1}{2}}
\end{aligned}
$$
and similarly,
$$
(\partial_\theta,\,\partial_\varphi\text{ terms})\lesssim\bigg(\sum_{l,m} \lambda_m^{-\frac{3}{2}}\bigg)^{\frac{1}{2}}\bigg(\sum_{l,m}\lambda_m^3\int_2^{e^s}\rho^{2\alpha-1}(|\Psi_{l,m}^D|^2+\rho^2|\partial_\rho\Psi_{l,m}^D|^2)\,d\rho\bigg)^{\frac{1}{2}}\lesssim\bigg(\sum_{l=0}^4I_l(s)\bigg)^{\frac{1}{2}}.
$$
Thus, we infer for all $2\le |y|\le e^s$ that
$$
\bigg|\frac{\rho^{1-\kappa}\nabla\Psi^D(y)}{u_n^D}\bigg|\lesssim
\Vert \nabla\Psi^D\Vert_{L^\infty(|y|=2)}+\bigg(\sum_{l=0}^4I_l(s)\bigg)^{\frac{1}{2}}.
$$
\textbf{Step 3} (Exterior Bound): We now propagate the $L^\infty$-bound to the region outside of the self-similar scale. From the claim in \textbf{Step 1}, we infer the bound
$$
\sum_{j=0}^i\lambda_m^{i-j}\int_0^\infty \rho^{2j+2\alpha+2n_p}|f^{(j)}|^2\,d\rho\lesssim_i\sum_{j=0}^i\int_{\mathbb R^3}\rho^{2j+2\alpha+2n_p-2}|\nabla^i f(\rho)Y^{(l,m)}|^2\,dy
$$
with some universal constant independent of $m$. Using the same $\eta$ and decomposition of $\Psi^D$ as in \textbf{Step 2} and apply the above bound with $f(\rho)=\tilde \chi_s\Psi_{l,m}^D(\rho)$ for a cut-off $\tilde \chi_s(y)=\varphi(2e^{-s}|y|)$, we infer
$$
\begin{aligned}
&\sum_{j=0}^{4}\lambda_m^{4-j}\int_{e^s}^\infty r^{2j+2\alpha-1}\xi(e^{-s}\rho)^{2n_p+1}|\partial_\rho^j \Psi_{l,m}^D|^2\,d\rho\\
\lesssim & \sum_{j=0}^4\int_{|y|\ge1}\rho^{2j-2s_c}\xi(e^{-s}\rho)^{2n_p+1}|\nabla^j(\Psi_{l,m}^D(\rho)Y^{(l,m)}(\theta,\varphi))|^2\,dy
\end{aligned}
$$
Thus, as in \textbf{Step 2}, we infer
$$
\sum_{l,m}\sum_{j=0}^{4}\lambda_m^{4-j}\int_{e^s}^\infty r^{2j+2\alpha-1}\xi(e^{-s}\rho)^{2n_p+1}|\partial_\rho^j \Psi_{l,m}^D|^2\,d\rho\lesssim\sum_{j=0}^4I_j(s).
$$
Thus, we infer for $|y|\ge e^s$,
$$
\bigg|\frac{\rho^{-\kappa}\Psi^D(y)}{u_n^D}\bigg|\lesssim \bigg\Vert \frac{\rho^{-\kappa}\Psi^D}{u_n^D}\bigg\Vert_{L^\infty(|y|=e^s)}+\sum_{l,m}\lambda_m^{\frac{1}{4}}\int_{e^s}^\infty e^{-n_ps}|\partial_\rho(\rho^{\alpha-\kappa}\Psi_{l,m}^D)|\,d\rho.
$$
Since
$$
\begin{aligned}
&\sum_{l,m}\lambda_m^{\frac{1}{4}}\int_{e^s}^\infty e^{-n_ps}|\partial_\rho(\rho^{\alpha-\kappa}\Psi_{l,m}^D)|\,d\rho\\
\lesssim&\sum_{l,m}\lambda_m^{\frac{1}{4}}\bigg(\int_{e^s}^\infty e^s\rho^{-2-2\kappa}\,d\rho\bigg)^{\frac{1}{2}}\bigg(\int_{e^s}^\infty\rho^{2\alpha-1}\xi(e^{-s}\rho)^{2n_p+1}(|\Psi_{l,m}^D|^2+\rho^2|\partial_\rho\Psi_{l,m}^D|^2)\,d\rho\bigg)^{\frac{1}{2}}\\
\le&\bigg(\sum_{l,m} \lambda_m^{-\frac{3}{2}}\bigg)^{\frac{1}{2}}\bigg(\sum_{l,m}\lambda_m^2\int_{e^s}^\infty\rho^{2\alpha-1}\xi(e^{-s}\rho)^{2n_p+1}(|\Psi_{l,m}^D|^2+\rho^2|\partial_\rho\Psi_{l,m}^D|^2)\,d\rho\bigg)^{\frac{1}{2}}\\
\lesssim &\bigg(\sum_{l=0}^4I_l(s)\bigg)^{\frac{1}{2}},
\end{aligned}
$$
combining with the interior bound, we infer \eqref{eq: L^infty to L^2 Non-radial} for $\Psi^D$. As in \textbf{Step 2}, we can bound the derivatives of $\Psi^D$ in the region $|y|\ge e^s$. This concludes the proof of \eqref{eq: L^infty to L^2 Non-radial}.
\end{proof}


\subsection{Proof of Proposition \ref{prop: Bootstrap}}


We are in position to prove Proposition \ref{prop: Bootstrap}.\\

\begin{proof}[proof of Proposition \ref{prop: Bootstrap}]
\textbf{Step 1} (Energy estimates): We claim the energy estimate
\begin{equation}
\label{eq: Energy Estimate}
\frac{dI_j}{ds}\lesssim e^{-\varepsilon s}
\end{equation}
holds for some $\varepsilon>0$ for all $0\le j\le 4$ so in particular, by the choice of initial value \eqref{eq: Choice of Initial Value 2},
$$
I_j(s)\le I_j(s_0)+ Ce^{-\varepsilon s_0} 
$$
is arbitrarily small for $s_0$ sufficiently large.\\\\
\textbf{Case 1} ($1\le j\le 4$): Suppose claim holds for $< j$ cases. Denote by $I_j^\Psi$, $I_j^\Omega$ the weighted $L^2$-norm of $\Psi^D$ and $\Omega^D$ in $I_j$. For the $\Psi^D$ component, we infer
$$
\begin{aligned}
\frac{dI_j^\Psi}{ds}&=\int_{|y|\ge1}\rho^{2j-2s_c}\bigg[-\rho\frac{\partial}{\partial \rho}\xi(e^{-s}\rho)^{2n_p+1}|\nabla^j\Psi^D|^2+2\xi(e^{-s}\rho)^{2n_p+1}\nabla^j\Psi^D\cdot\partial_s\nabla^j\Psi^D\bigg]\,dy\\
&\le 2\int_{|y|\ge1} \rho^{2j-2s_c}\xi(e^{-s}\rho)^{2n_p+1}\Big[(j+\Lambda+\partial_s)\nabla^j\Psi^D\Big]\cdot\nabla^j\Psi^D\,dy
\end{aligned}
$$
where we integrate by parts for the last inequality and note that the boundary terms are non-positive. By the commutation relations
$$
[\nabla^k,\Lambda]=k\nabla^k,
$$
and \eqref{eq: Wave Equation for Perturbation}, we infer
$$
\begin{aligned}
\frac{dI_j^\Psi}{ds} &\le 2\int_{|y|\ge1}\rho^{2j-2s_c}\xi(e^{-s}\rho)^{2n_p+1}\nabla^j(\Lambda+\partial_s)\Psi^D\cdot\nabla^j\Psi^D\,dy\\
&=-2\int_{|y|\ge1} \rho^{2j-2s_c}\xi(e^{-s}\rho)^{2n_p+1}\nabla^j\Omega^D\cdot\nabla^j\Psi^D\,dy
\end{aligned}
$$
Similarly, for $\Omega^D$ component, it follows from the above commutation relation and \eqref{eq: Wave Equation for Perturbation} that
\begin{equation}
\label{eq: Omega Energy}
\begin{aligned}
\frac{dI_j^\Omega}{ds} &\le 2\int_{|y|\ge1}\rho^{2j-2s_c}\xi(e^{-s}\rho)^{2n_p+1}\Big[(j+\Lambda+\partial_s)\nabla^{j-1}\Omega^D\Big]\cdot\nabla^{j-1}\Omega^D\,dy\\
&=2\int_{|y|\ge1} \rho^{2j-2s_c}\xi(e^{-s}\rho)^{2n_p+1}\nabla^{j-1}(-\Delta  \Psi^D -\tilde \Psi^p+\mathcal E)\cdot\nabla^{j-1}\Omega^D\,dy.
\end{aligned}
\end{equation}
where we recall the definition \eqref{eq: E} of $\mathcal E$. Integrate by parts the first term we infer
\begin{equation}
\label{eq: Cross Term}
\begin{aligned}
&2\int_{|y|\ge1} \rho^{2j-2s_c}\xi(e^{-s}\rho)^{2n_p+1}(-\nabla^{j+1} \Psi^D)\cdot\nabla^{j-1}\Omega^D\,dy\\
\le&\,2\int_{|y|\ge1} \rho^{2j-2s_c}\xi(e^{-s}\rho)^{2n_p+1}\nabla^j\Psi^D\cdot\nabla^j\Omega^D\,dy\\
+&\,2\int_{|y|\ge1} \nabla\Big[\rho^{2j-2s_c}\xi(e^{-s}\rho)^{2n_p+1}\Big]\cdot\nabla^j\Psi^D\,\nabla^{j-1}\Omega^D\,dy.
\end{aligned}
\end{equation}
From the bootstrap bound \eqref{eq: Bootstrap Bound 4} and \eqref{eq: Bootstrap Bound 3} , we infer for $2\varepsilon<\frac{\delta_0}{2k+1-2s_c}=\frac{\delta_0}{7-2s_c}$, the bound for the last term above
$$
\begin{aligned}
&\int_{|y|\ge1} \rho^{2j-2s_c-1}\xi(e^{-s}\rho)^{2n_p+1}|\nabla^j \Psi^D| \,|\nabla^{j-1}\Omega^D|\,dy\\
\le &\, e^{-\varepsilon s}\int_{|y|\ge e^{\varepsilon s}} \rho^{2j+2s_c}\xi(e^{-s}\rho)^{2n_p+1}|\nabla^j \Psi^D |\,|\nabla^{j-1}\Omega^D|\,dy\\
+&\,e^{\varepsilon(2k+1-2s_c)s}\int_{1\le|y|\le e^{\varepsilon s}} \rho^{-2(k+1-j)}|\nabla^j\Psi |\,|\nabla^{j-1}\Omega|\,dy\\
\le &\,e^{-\varepsilon s}(I_j^\Psi I_j^\Omega)^{\frac{1}{2}}+e^{\frac{\delta_0}{2} s}\int_{|y|\le e^{\varepsilon s}}\left(|\nabla^j \Psi |^2+|\nabla^{j-1}\Omega |^2\right)\langle\rho\rangle^{-2(4-j)}\,dy\\
\le &\,e^{-\varepsilon s}I_j+e^{\frac{\delta_0}{2}s}\Vert X\Vert_{\mathbb H}^2\le e^{-\varepsilon s}I_j+e^{-\frac{\delta_0}{2}s}\le e^{-\varepsilon s}
\end{aligned}
$$
for some $\varepsilon>0$. Note that we have used Hardy's inequality from Lemma \ref{lem: Compact Embedding}:
\begin{equation}
\label{eq: Hardy on Psi}
\int_{|y|\le e^{\varepsilon s}}|\nabla^j \Psi |^2\langle\rho\rangle^{-2(4-j)}\,dy\lesssim\Vert \Psi\Vert_{H_4}^2 \le\Vert X\Vert_{\mathbb H}^2
\end{equation}
and similarly for $\Omega$. Thus, we infer the bound for \eqref{eq: Cross Term}:
$$
\begin{aligned}
&2\int_{|y|\ge1} \rho^{2j-2s_c}\xi(e^{-s}\rho)^{2n_p+1}(-\nabla^{j+1} \tilde \Psi) \cdot\nabla^{j-1}\Omega^D\,dy\\
\le & \,2\int_{|y|\ge1} \rho^{2j-2s_c}\xi(e^{-s}\rho)^{2n_p+1}\nabla^j\Psi^D\cdot\nabla^j\Omega^D\,dy+C e^{-\varepsilon s}.
\end{aligned}
$$
Next, we prove the bound for the term with $\tilde \Psi^p=(u_n+\Psi)^p$ and $\mathcal E$. By the bootstrap bound \eqref{eq: Bootstrap Bound 2} together with the asymptotic behaviour of $u_n^D$, it holds for $l=0,1$ that
$$
\bigg\Vert \frac{\rho^{l-\kappa}\nabla ^l\tilde \Psi(s)}{u_n^D}\bigg\Vert_{L^\infty(|y|\ge1)}\lesssim1,
$$
we infer for all $\rho\ge 1$ and $j\le 4$,
$$
\begin{aligned}
\left|\nabla^{j-1}\left(|\tilde \Psi|^{p-1}\tilde \Psi\right)\right|&\lesssim \sum_{i=1}^{j-1} |\tilde \Psi|^{p-i}\sum_{|\alpha|=j-1, \alpha>0} |\nabla^{\alpha_1}\tilde \Psi|\cdots |\nabla^{\alpha_i}\tilde \Psi|\\
&\lesssim \sum_{l=1}^{j-1}|\nabla^l\tilde \Psi|\sum_{i=1}^{j-1}|\tilde \Psi|^{p-i}\sum_{\substack{|\alpha|=j-l-1,\\\Vert \alpha\Vert_\infty\le 1}}|\nabla^{\alpha_1}\tilde \Psi|\cdots |\nabla^{\alpha_{i-1}}\tilde \Psi|\\
&\lesssim \sum_{l=0}^{j-1}\rho^{-j+l+1+(-\alpha+\kappa)(p-1)}|\nabla^l\tilde \Psi|\le \sum_{l=0}^{j-1}\rho^{-j+l-\frac{3}{4}} |\nabla^l\tilde \Psi|
\end{aligned}
$$
where we have used that $\kappa<\frac{1}{4(p-1)}$ and that $p> 5$ to bound $|\tilde\Psi|^{p-i}$. \\\\
Next, we bound $\mathcal E$ where we recall the definition \eqref{eq: E} of $\mathcal E$. Observe that
$$
\partial_\rho^j \eta(e^{-s}\rho)= e^{-js}  \eta^{(j)}(e^{-s}\rho)\lesssim \rho^{-j} \eta(e^{-s}\rho).
$$
In view of the asymptotic behaviours of $u_n^D$ and its derivatives, we have that for all $\rho\ge1$ and $j\le 4$,
$$
|\nabla^{j-1}\mathcal E|\lesssim \Big|\nabla^{j-1}\Big( \eta(e^{-s}\rho) u_n^p-(\Delta\eta(e^{-s}\rho))u_n-2e^{-s}\eta'(e^{-s}\rho) u_n'\Big)\Big| \lesssim \rho^{-j-1}u_n^D.
$$
Adding the two bounds obtained above, we infer for $\rho\ge1$ that
\begin{equation}
\label{eq: Inner Nonlinear Bound}
\left|\nabla^{j-1}\left(|\tilde \Psi|^{p-1}\tilde \Psi-\mathcal E\right)\right|\lesssim \sum_{l=0}^{j-1}\rho^{-j+l-\frac{3}{4}}\Big (|\nabla^l \Psi^D|+|\nabla^l u_n^D|\Big).
\end{equation}
We improve the above bound in the region $1\le\rho\le e^s$. Here, $\eta(e^{-s}\rho)\equiv1$ so $\mathcal E=u_n^p$ and we infer for $j\le 4$,
$$
\begin{aligned}
&\left|\nabla^{j-1}\left(|\tilde \Psi|^{p-1}\tilde \Psi-\mathcal E\right)\right|\lesssim \left |\nabla^{j-1}\left(\Psi\int_0^1 |u_n+\tau\Psi|^{p-1}d\tau\right)\right|\\
\lesssim &\sup_{0\le \tau\le1}|u_n+\tau\Psi|^{p-4}\sum_{i=0}^{j-1}|\nabla^i \Psi| \sum_{\substack{|\alpha|=j-1-i,\\ \alpha_1\ge\alpha_2\ge\alpha_3}} \prod_{q=1}^3\sup_{0\le \tau\le1}\left|\nabla^{\alpha_q}(u_n+\tau\Psi)\right|
\end{aligned}
$$
Since $i+\alpha_1+\alpha_2+\alpha_3=j-1\le 3$ in the sum above, $\alpha_2,\alpha_3\le 1$ so the $L^\infty$-bound \eqref{eq: Bootstrap Bound 2} applies. Then, we have for all $1\le\rho\le e^s$ that
\begin{equation}
\label{eq: Exterior Nonlinear Bound}
\begin{aligned}
\left|\nabla^{j-1}\left(|\tilde \Psi|^{p-1}\tilde \Psi-\mathcal E\right)\right|&\lesssim \rho^{(-\alpha+\kappa)(p-2)}\sum_{i=0}^{j-1}|\nabla^i\Psi| \Big(|\nabla^{j-1-i}u_n|+|\nabla^{j-1-i}\Psi|\Big)\\
&\lesssim \sum_{i=0}^{j-1}\rho^{-j+i+1+(-\alpha+\kappa)(p-1)}|\nabla^i \Psi^D| \lesssim\sum_{i=0}^{j-1}\rho^{-j+i-\frac{3}{4}}|\nabla^i \Psi^D|.
\end{aligned}
\end{equation}
Thus, using the bounds \eqref{eq: Interior Nonlinear Bound} and \eqref{eq: Exterior Nonlinear Bound} above, we infer for the $\tilde \Psi^p$ and $\mathcal E$ terms in \eqref{eq: Omega Energy} that
$$
\begin{aligned}
&\int_{|y|\ge 1} \rho^{2j-2s_c}\xi(e^{-s}\rho)^{2n_p+1}|\nabla^{j-1}(\tilde \Psi^p-\mathcal E)|\,|\nabla^{j-1}\Omega^D|\,dy\\
\lesssim&\,\sum_{l=0}^{j-1}\int_{|y|\ge 1} \rho^{j+l-2s_c-\frac{1}{2}}\xi(e^{-s}\rho)^{2n_p+1}\Big(|\nabla^l\Psi^D|+\mathbbm 1_{|y|\ge e^s}|\nabla^l u_n^D|\Big)|\nabla^{j-1}\Omega^D|\,dy\\
\le &\, \sum_{l=0}^{j-1}\int_{|y|\ge e^s} \rho^{j+l-2s_c-\frac{1}{2}}\xi(e^{-s}\rho)^{2n_p+1}|\nabla^lu_n^D||\nabla^{j-1}\Omega^D|\,dy\\
+&\,e^{-\frac{\varepsilon}{2} s}\sum_{l=0}^{j-1}\int_{|y|\ge e^{\varepsilon s}} \rho^{j+l-2s_c}\xi(e^{-s}\rho)^{2n_p+1}|\nabla^l \Psi^D |\,|\nabla^{j-1}\Omega^D|\,dy\\
+&\,\sum_{l=0}^{j-1}\int_{1\le|y|\le e^{\varepsilon s}} \rho^{j+l-2s_c-\frac{1}{2}}|\nabla^l\Psi |\,|\nabla^{j-1}\Omega|\,dy.
\end{aligned}
$$
Thus, from the bootstrap bound \eqref{eq: Bootstrap Bound 4} and Hardy's inequality \eqref{eq: Hardy on Psi}, we infer for $2\varepsilon<\frac{\delta_0}{2k+\frac32-2s_c}=\frac{\delta_0}{\frac{15}{2}-2s_c}$, the bound
$$
\begin{aligned}
&\int_{|y|\ge 1} \rho^{2j-2s_c}\xi(e^{-s}\rho)^{2n_p+1}|\nabla^{j-1}(\tilde \Psi^p-\mathcal E)|\,|\nabla^{j-1}\Omega^D|\,dy\\
\le &\sum_{l=0}^{j-1}\bigg[\bigg(\int_{e^s}^\infty \rho^{-\frac52}\xi(e^{-s}\rho)\,d\rho\bigg)^\frac{1}{2}(I_j^\Omega)^{\frac12}+e^{-\frac{\varepsilon}{2} s}(I_l^\Psi I_j^\Omega)^{\frac{1}{2}}\\
+&\,e^{\frac{\delta_0}{2} s}\bigg(\int_{|y|\le e^{\varepsilon s}}|\nabla^l \Psi |^2\langle\rho\rangle^{-2(4-l)}\,dy\bigg)^{\frac{1}{2}}\bigg(\int_{|y|\le e^{\varepsilon s}} |\nabla^{j-1}\Omega|^2\langle\rho\rangle^{-2(4-j)}\,dy\bigg)^{\frac{1}{2}}\bigg]\\
\le &e^{-\frac34 s}I_j+\sum_{l=0}^j e^{-\frac{\varepsilon}{2} s} I_l+e^{\frac{\delta_0}{2}s}\Vert X\Vert_{\mathbb H}^2\lesssim e^{-\frac34 s}+ e^{-\frac{\varepsilon}{2} s}+e^{-\frac{\delta_0}{2}s}
\end{aligned}
$$
Take smaller $\varepsilon$ if necessary, we infer
$$
\frac{dI_j^\Omega}{ds}\le 2\int_{|y|\ge 1} \rho^{2j-2s_c}\xi(e^{-s}\rho)^{2n_p+1}\nabla^j\Psi^D\cdot\nabla^j\Omega^D\,dy+C e^{-\varepsilon s}
$$
Hence, by adding the bounds for $I_j^\Psi$ and $I_j^\Omega$, we obtain the overall bound
\begin{equation}
\label{eq: Case 1 Energy Estimate}
\frac{dI_j}{ds} \lesssim e^{-\varepsilon s}
\end{equation}
i.e. the claim \eqref{eq: Energy Estimate} holds.\\\\
\textbf{Case 2} ($j=0$): Note that $I_0=I_0^\Psi$. As in Case 1,
$$
\frac{dI_0}{ds} \le -2\int_{|y|\ge 1} \rho^{-2s_c}\xi(e^{-s}\rho)^{2n_p+1}\Omega^D\Psi^D\,dy.
$$
From the bootstrap bound \eqref{eq: Bootstrap Bound 4} and \eqref{eq: Bootstrap Bound 3} , we infer for $2\varepsilon<\frac{\delta_0}{2k+1-2s_c}=\frac{\delta_0}{7-2s_c}$, the bound the above:
$$
\begin{aligned}
&\int_{|y|\ge1} \rho^{-2s_c}\xi(e^{-s}\rho)^{2n_p+1}|\Psi^D\Omega^D|\,dy\\
\le &\, e^{-\varepsilon s}\int_{|y|\ge e^{\varepsilon s}} \rho^{1+2s_c}\xi(e^{-s}\rho)^{2n_p+1}| \Psi^D \Omega^D|\,dy+e^{\varepsilon(2k+1-2s_c)s}\int_{1\le|y|\le e^{\varepsilon s}} \rho^{-(2k+1)}|\Psi \Omega|\,dy\\
\le &\,e^{-\varepsilon s}(I_0^\Psi I_1^\Omega)^{\frac{1}{2}}+e^{\frac{\delta_0}{2} s}\int_{|y|\le e^{\varepsilon s}}\left(| \Psi |^2\langle\rho\rangle^{-2(k+1)}+|\Omega |^2\langle\rho\rangle^{-2k}\right)\,dy\le e^{-\varepsilon s}+e^{-\frac{\delta_0}{2}s}\le e^{-\varepsilon s}
\end{aligned}
$$
for some $\varepsilon>0$. Hence, the claim.\\\\
\textbf{Step 2} (Improvement of \eqref{eq: Bootstrap Bound 2} and \eqref{eq: Bootstrap Bound 3}): Given $d_0\ll 1$, we claim that these quantities can be bounded by $d_0$ in $s\in [s_0,s^*]$.\\\\
\textbf{Improved bound for the weighted Sobolev norm}: It follows from the energy estimate \eqref{eq: Energy Estimate} and the choice of initial value \eqref{eq: Choice of Initial Value 2} that given $d_0\ll1$, we have that for all $s\in [s_0,s^*]$ and $0\le j\le 4$,
\begin{equation}
\label{eq: Improved Bound 1}
I_j(s)\le I_j(s_0)+ Ce^{-\varepsilon s_0} \le d_0
\end{equation}
for $s_0$ sufficiently large.\\\\
\textbf{Improved pointwise bound}: Let $0\le j\le 1$. By Sobolev embedding and \eqref{eq: Bootstrap Bound 4}, we infer for large $s_0$ that
$$
\Vert\nabla^j\Psi^D\Vert_{L^\infty(|y|\le2)}\ll d_0.
$$
Then, by Lemma \ref{lem: L^infty to L^2 Non-radial}, we have that for $0\le j\le 1$, 
\begin{equation}
\label{eq: Apply L^infty Bound}
\bigg\Vert \frac{\rho^{j-\kappa}\nabla ^j\Psi^D}{u_n^D}\bigg\Vert_{L^\infty(|y|\ge1)}\lesssim \Vert\nabla^j\Psi^D\Vert_{L^\infty(|y|=2)}+\bigg(\sum_{l=0}^{4}I_l(s)\bigg)^{\frac{1}{2}}\le d_0.
\end{equation}
where the last inequality follows from \eqref{eq: Improved Bound 1}.\\\\
\textbf{Step 3} (Improved $\Vert \cdot\Vert_{\mathbb H}$ bound and non-linear bound): Recall that
\begin{equation}
\label{eq: Integral Form of Nonlinearity}
\begin{aligned}
G_\Omega=&-|\Psi+u_n|^{p-1}(\Psi+u_n)+u_n^p+pu_n^{p-1}\Psi\\
=&-p(p-1)\Psi^2\int_0^1(1-\tau)|u_n+\tau\Psi|^{p-3}(u_n+\tau\Psi)\,d\tau.
\end{aligned}
\end{equation}
We claim that by choosing $s_0$ sufficiently large and $c>0$ small,
\begin{equation}
\label{eq: Nonlinear Bound 2}
\forall s\in [s_0,s^*],\quad \Vert G(s)\Vert_{\mathbb H}\le\Vert X(s)\Vert_{\mathbb H}^{1+c}.
\end{equation}
Let $\rho\ge 1$. Then,
\begin{equation}
\label{eq: Derivative Nonlinearity}
|\nabla^k G_\Omega|\lesssim \sum_{i+j+l=k}|\nabla^i\Psi||\nabla^j\Psi| \left|\int_0^1(1-\tau)\nabla^l\left(|u_n+\tau\Psi|^{p-3}(u_n+\tau\Psi)\right)\,d\tau\right|.
\end{equation}
 For $m\le 3$ and $p>5$, we have the bound: 
\begin{equation}
\label{eq: Nonlinearity}
\left|\int_0^1(1-\tau)|u_n+\tau\Psi|^{p-m-3}(u_n+\tau\Psi)\,d\tau\right|\lesssim \sup_{0\le\tau\le1}|u_n+\tau\Psi|^{p-m-2} 
\end{equation}
This, together with the $L^\infty$-bound \eqref{eq: Bootstrap Bound 2} which implies $|\Psi|\lesssim \langle\rho\rangle^{-\alpha+\kappa}$ and the asymptotic behaviour of $u_n$, we infer
\begin{equation}
\label{eq: Nonlinearity Expansion}
\begin{aligned}
&\left|\int_0^1(1-\tau)\nabla^l\left(|u_n+\tau\Psi|^{p-3}(u_n+\tau\Psi)\right)\,d\tau\right|\\
\lesssim& \sum_{m=0}^l\int_0^1(1-\tau)|u_n+\tau\Psi|^{p-m-3}(u_n+\tau\Psi)\,d\tau\sum_{|\alpha|=l}\,\prod_{q=1}^m(|\nabla^{\alpha_q}u_n|+|\nabla^{\alpha_q}\Psi|)\\
\lesssim &\sum_{m=0}^l\rho^{(-\alpha+\kappa)(p-m-2)}\sum_{|\alpha|=l}\,\prod_{q=1}^m(|\nabla^{\alpha_q}u_n|+|\nabla^{\alpha_q}\Psi|).
\end{aligned}
\end{equation}
Note that \eqref{eq: Nonlinearity} applies since $m\le l\le k=3$. Also, at most one of $\alpha_1,$ $i$, $j$ is $>1$ i.e. we can apply the $L^\infty$-bound \eqref{eq: Bootstrap Bound 2} for at least two of $\nabla^{\alpha_1}\Psi$, $\nabla^i\Psi$, $\nabla^j\Psi$ factors. Thus, we infer
$$
\begin{aligned}
|\nabla^k G_\Omega|&\lesssim \sum_{i+j+l=k}|\nabla^i\Psi||\nabla^j\Psi| \sum_{m=0}^l\rho^{(-\alpha+\kappa)(p-m-2)}\sum_{|\alpha|=l}\,\prod_{q=1}^m(|\nabla^{\alpha_q}u_n|+|\nabla^{\alpha_q}\Psi|)\\
\lesssim&\sum_{i+j+l=k}\rho^{-\alpha-j+\kappa}|\nabla^i\Psi|\sum_{m=0}^l\rho^{(-\alpha+\kappa)(p-m-2)}\rho^{m(-\alpha+\kappa)-l}\\
\lesssim& \sum_{i=0}^k\rho^{(-\alpha+\kappa)(p-1)+i-k}|\nabla^i\Psi|\lesssim \sum_{i=0}^k \rho^{i-k-\frac{3}{2}}|\nabla^i\Psi|.
\end{aligned}
$$
where the final inequality follows from $\kappa<\frac{1}{2(p+1)}$. Then for $R\ge 1$, by setting $k=3$, we infer
\begin{equation}
\label{eq: Exterior Nonlinear Bound}
\begin{aligned}
&\int_{|y|\ge R} |\nabla^3 G_\Omega|^2\,dy\lesssim\sum_{i=0}^3\int_{|y|\ge R}\rho^{-2i-3}|\nabla^{3-i}\Psi|^2\,dy\lesssim R^{-1}\Vert\Psi\Vert_{H_3}^2\le R^{-1}\Vert X\Vert_{\mathbb H}^2
\end{aligned}
\end{equation}
where we have used the Hardy's inequality \eqref{eq: Hardy on Psi}. Now we consider the region $0\le\rho\le R$. Denote 
$$
H^3_R:=H^3(B_R(0)) 
$$
Then, there exists $M_1>0$ such that
$$
\Vert \phi\psi\Vert_{H^3_R}^2\le R^{M_1}\Vert \phi\Vert_{H^3_R}^2\Vert \psi\Vert_{H^3_R}^2\quad \forall \phi,\ \psi\in H^3_R
$$
since $3=k>\frac{d}{2}=\frac32$ so that $H^3(\mathbb R^3)$ is an algebra. From \eqref{eq: Nonlinearity} and the assumption $3=k<p-2$ we infer that,
$$
\sum_{m=0}^3\left\Vert\int_0^1(1-\tau)|u_n+\tau\Psi|^{p-m-3}(u_n+\tau\Psi)\,d\tau\right\Vert_{L^\infty(\mathbb R^3)}\lesssim 1.
$$
 Note also that the $L^\infty$-bound \eqref{eq: Bootstrap Bound 2} implies $|\nabla^j\Psi|\lesssim\langle\rho\rangle^{-j-\alpha+\kappa}$ for $0\le j\le 2$ and for all $s\in [s_0,s^*]$. Then it follows from \eqref{eq: Integral Form of Nonlinearity} that
\begin{equation}
\label{eq: Interior Nonlinear Bound}
\begin{aligned}
&\int_{|y|\le R} |\nabla^3 G_\Omega|^2\,dy\le \Vert G_\Omega\Vert_{H^3_R}^2\\
\lesssim &\ R^{2M_1}\Vert \Psi\Vert_{H_R^3}^4\left\Vert \int_0^1(1-\tau)|u_n+\tau\Psi|^{p-3}(u_n+\tau\Psi)\,d\tau\right\Vert_{H_R^3}^2\\
\lesssim & \ R^{2M_1}\Vert \Psi\Vert_{H_R^3}^4\sum_{|\alpha|\le 3}\left\Vert\prod_q(|\nabla^{\alpha_q}u_n|+|\nabla^{\alpha_q}\Psi|)\right\Vert_{L^2(B_R(0))}^2\\
\lesssim& \,R^{2M_1}\Vert\Psi\Vert_{H^3_R}^4 \sum_{m=0}^3(\Vert u_n\Vert_{H^3_R}^2+\Vert\Psi\Vert_{H^3_R}^2)^m\lesssim R^M\Vert X\Vert_{\mathbb H}^4
\end{aligned}
\end{equation}
for some $M>0$. Set $R=\Vert X\Vert_{\mathbb H}^{-\frac{2}{1+M}}$ and add \eqref{eq: Exterior Nonlinear Bound} with \eqref{eq: Interior Nonlinear Bound} so the claim \eqref{eq: Nonlinear Bound 2} follows by choosing $c<\frac{1}{1+M}$.\\\\
By the decay estimate in Corollary \ref{cor: Exponential Decay 1},
\begin{equation}
\label{eq: Stable Evolution}
\begin{aligned}
\Vert (I-P)X(s)\Vert_{\mathbb H}&\lesssim e^{-\frac{\delta}{2}(s-s_0)}\Vert X(s_0)\Vert_{\mathbb H}+\int_{s_0}^s e^{-\frac{\delta}{2}(s-\tau)}\Vert G(\tau)\Vert_{\mathbb H}\,d\tau\\
&\lesssim e^{-\frac{\delta}{2}s}\bigg[e^{\frac{\delta}{2}s_0}\Vert X\Vert_{\mathbb H}+\int_{s_0}^s e^{(\frac{\delta}{2}-\frac{\delta_0}{2}(1+c))\tau}\,d\tau\bigg]\lesssim e^{-\frac{\delta}{2}s}
\end{aligned}
\end{equation}
since $\frac{\delta}{1+c}<\delta_0$. This, together with \eqref{eq: Bootstrap Bound 1}, we infer
$$
\Vert X(s)\Vert_{\mathbb H}\lesssim e^{-\frac{\delta}{2}s}.
$$
This proves an improved bound for \eqref{eq: Bootstrap Bound 4}. Then, by \eqref{eq: Nonlinear Bound 2}, the non-linear bound \eqref{eq: Nonlinear Bound 1} follows.
\end{proof}

\appendix

\section{Bound on self-similar profiles}
In this section, we derive some $\rho\rightarrow \infty$ asymptotic properties of the smooth profiles $u_n$ constructed in Theorem \ref{theo: Result 1}.

\begin{lemma}
\label{lem: Bounds on u_n}
By induction on $k$. Let $u_n$ be the self-similar profiles constructed in \emph{Proposition \ref{prop: Countable Solutions}}. For all $k\in \mathbb N$, as $\rho\rightarrow\infty$,
\begin{equation}
\label{eq: Bounds on u_n}
\partial_\rho^k u_n=\mathcal O(\rho^{-\alpha-k}),\quad \partial_\rho^k(u_n^{p-1})=\mathcal O(\rho^{-2-k}).
\end{equation}
\end{lemma}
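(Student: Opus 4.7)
The plan is to induct on $k$, establishing the bounds on $\partial_\rho^k u_n$ first and then deducing the bounds on $\partial_\rho^k(u_n^{p-1})$ by a chain-rule expansion. The base cases $k=0,1$ will be read off from the exterior construction; the inductive step for $k\ge 2$ will be obtained from the self-similar profile equation \eqref{eq: Self-similar Profile} after differentiating $k-2$ times and solving algebraically for the top derivative.

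For the base cases, I would use Proposition \ref{prop: Exterior Solutions}, which writes $u_n=u_\infty+\varepsilon(\psi_1+w)$ on $[\rho_0,\infty)$. The leading profile $u_\infty=b_\infty\rho^{-\alpha}$ manifestly satisfies $\partial^k u_\infty=\mathcal{O}(\rho^{-\alpha-k})$ for all $k$. In the exterior region the function $\psi_1$ is a linear combination of the explicit fundamental solutions $\psi_1^R,\psi_2^R$ from Lemma \ref{lem: Fundamental Solutions}, each of which equals $\rho^{-\alpha-j}$ ($j\in\{0,1\}$) times a hypergeometric series in the analytic variable $\rho^{-2}$, so that $\psi_1(\rho)=\rho^{-\alpha}\sum_{m\ge 0}c_m\rho^{-m}$ term-by-term differentiable, giving $\partial^k\psi_1=\mathcal{O}(\rho^{-\alpha-k})$. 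For $w$, the bound $|w|\lesssim\rho^{-\alpha-1}$ on $\rho\ge 1$ follows from $\|w\|_{X_{\rho_0}}<\infty$, and $|\Lambda w|\lesssim\rho^{-\alpha-1}$ from the corresponding bound in \eqref{eq: Bounds on w}; writing $\rho w'=\Lambda w-\alpha w$ then gives $|w'|\lesssim\rho^{-\alpha-2}$. Summing the three contributions yields $u_n=\mathcal{O}(\rho^{-\alpha})$ and $u_n'=\mathcal{O}(\rho^{-\alpha-1})$.

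For the inductive step, I would rewrite \eqref{eq: Self-similar Profile} as
\begin{equation*}
(\rho^2-1)u_n''=\bigg[\frac{d-1}{\rho}-2(1+\alpha)\rho\bigg]u_n'-\alpha(1+\alpha)u_n+u_n^p,
\end{equation*}
and differentiate $k-2$ times. On the left, Leibniz produces $(\rho^2-1)u_n^{(k)}+2(k-2)\rho\,u_n^{(k-1)}+(k-2)(k-3)u_n^{(k-2)}$. On the right, each term obtained from Leibniz on $[(d-1)/\rho-2(1+\alpha)\rho]u_n'$ together with the linear term $-\alpha(1+\alpha)u_n$ is immediately $\mathcal{O}(\rho^{-\alpha-k+2})$ by the inductive hypothesis, since the dominant contribution $\rho\cdot u_n^{(k-1)}$ has the desired decay. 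For the nonlinearity $u_n^p$, the identity $\alpha p=\alpha+2$ together with the Faà di Bruno expansion $\partial^j(u_n^p)=\sum c\,u_n^{p-m}\prod_i\partial^{j_i}u_n$ and the inductive hypothesis gives $\partial^{k-2}(u_n^p)=\mathcal{O}(\rho^{-\alpha p-(k-2)})=\mathcal{O}(\rho^{-\alpha-k})$, which is even better. Moving the lower-derivative terms from the left to the right and dividing by $\rho^2-1\sim\rho^2$ yields $u_n^{(k)}=\mathcal{O}(\rho^{-\alpha-k})$, closing the induction. The same Faà di Bruno expansion with $p-1$ in place of $p$ then produces $\partial_\rho^k(u_n^{p-1})=\mathcal{O}(\rho^{-\alpha(p-1)-k})=\mathcal{O}(\rho^{-2-k})$ using $\alpha(p-1)=2$.

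The only real obstacle is the combinatorial bookkeeping in the inductive step: one needs to verify that every term produced by Leibniz on the first-order coefficient and by Faà di Bruno on $u_n^p$ is majorized by $\mathcal{O}(\rho^{-\alpha-k+2})$ so that the division by $\rho^2-1$ delivers the sharp rate without loss. The algebraic identities $\alpha(p-1)=2$ and $\alpha p=\alpha+2$ are what make this work: they ensure that the nonlinearity $u_n^p$ inherits the exact $\rho^{-\alpha-2}$ tail required to match the linear mechanism, and that successive differentiations preserve this matching.
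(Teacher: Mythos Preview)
Your proposal is correct and follows essentially the same strategy as the paper's proof: the base cases $k=0,1$ are read off from the exterior decomposition $u_n=u_\infty+\varepsilon(\psi_1+w)$ and the bounds of Proposition~\ref{prop: Exterior Solutions}, and the inductive step is obtained by differentiating the profile ODE \eqref{eq: Self-similar Profile} and expanding the nonlinearity via Fa\`a di Bruno, using the identities $\alpha(p-1)=2$ and $\alpha p=\alpha+2$ exactly as you do. The only point the paper records explicitly that you leave implicit is that $u_n>0$ for all sufficiently large $\rho$ (needed so that $u_n^{p-1}$ and its chain-rule derivatives make sense for non-integer $p$), which is immediate from $u_n=u_\infty+o(\rho^{-\alpha})$.
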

\begin{proof}
In view of \eqref{eq: Bounds on w}, taking $\varepsilon \ll1$ we infer
$$
u_n=\mathcal O(\rho^{-\alpha}), \quad u_n'=\mathcal O(\rho^{-\alpha-1})
$$
and $u_n\ge0$ for all $\rho$ sufficiently large. It follows immediately that 
$$
u_n^{p-1}=\mathcal O(\rho^{-2}),\quad (u_n^{p-1})'=(p-1)u_n^{p-1}u_n'=\mathcal O(\rho^{-3}).
$$
In view of \eqref{eq: Self-similar Profile}, we infer
$$
|u_n^{(k)}|\lesssim_{k,\rho} \partial^{k-2}\bigg[\frac{1}{\rho^2}\bigg(\rho u_n'+u_n+u_n^p)\bigg)\bigg]
$$
for all $\rho>\rho_0$ and $k\ge 2$. Suppose lemma holds for some $k\ge2$. Then by hypothesis, for all $\rho>\rho_0$,
$$
|u_n^{(k+1)}|\lesssim \sum_{j=0}^{k} \rho^{-j-2}u_n^{(k-j-1)}+\sum_{j=0}^{k-1} \rho^{-j-2}\sum_{i=0}^{k-j-1}u_n^{(i)}(u_n^{p-1})^{(k-j-i-1)}\lesssim \rho^{-\alpha-k-1}.
$$
Furthermore, by hypothesis and bound on $u_n^{(k+1)}$, we infer
$$
|(u_n^{p-1})^{(k+1)}|\lesssim \sum_{j=0}^{k+1}u_n^{p-k+j-2}\sum_{|\alpha|=k+1,\,\alpha>0} u_n^{(\alpha_1)}\cdots u_n^{(\alpha_j)}\lesssim \rho^{-3-k}
$$
and this concludes the proof by induction.
\end{proof}

\section{Maximality of $\tilde{\mathcal M}$} 
In this section, we consider the problem \eqref{eq: Decomposition of Psi}. Given $H$ such that $H(\rho)Y^{(l,m)}\in C_c^\infty(\mathbb R^3)$, we seek solution to
\begin{equation}
\label{eq: Radial part of Psi}
[\mathcal L-\rho^{-2}\lambda_m-(\Lambda+R+1)(\Lambda+R)+pu_n^{p-1}]\Psi=H.
\end{equation}

\begin{lemma}
\label{lem: Local Existence}
Let $H\in C^\infty([0,\infty))$. Then for $R$ sufficiently large, there exists a unique solution $\Psi\in C^1([0,\infty))$ to \eqref{eq: Radial part of Psi}. Furthermore, if $H(\rho)Y^{(l,m)}\in C_c^\infty(\mathbb R^3)$, then $\Psi(\rho)Y^{(l,m)}$ is smooth on $\mathbb R^3$.
\end{lemma}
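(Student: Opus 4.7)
The equation is a linear second-order ODE with regular singular points at $\rho = 0$ (from $\rho^{-2}\lambda_m$ and $\frac{d-1}{\rho}\partial_\rho$) and $\rho = 1$ (from the vanishing of $1-\rho^2$), and smooth coefficients elsewhere. My plan is to apply \emph{Proposition \ref{prop: Singular ODE}} at each singular point to construct local $C^1$ solutions, extend by standard linear ODE theory on $(0,1)$ and $(1,\infty)$, and close by a matching argument at $\rho = 1$ whose unique solvability is ensured by the largeness of $R$.

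Near the origin I would first substitute $\Psi = \rho^m\phi$ to kill the $\rho^{-2}\lambda_m$ singularity. Using $\Lambda(\rho^m\phi) = \rho^m[(\alpha+m)+\rho\partial_\rho]\phi$ and the identity $(\mathcal L - \rho^{-2}\lambda_m)(\rho^m\phi) = \rho^m(\phi'' + \tfrac{2m+d-1}{\rho}\phi')$, the equation transforms into
\begin{equation*}
(1-\rho^2)\phi'' + \left[\tfrac{2m+d-1}{\rho} - 2(\alpha+m+R+1)\rho\right]\phi' + \big[pu_n^{p-1} - (\alpha+m+R)(\alpha+m+R+1)\big]\phi = \rho^{-m}H,
\end{equation*}
where the source $\rho^{-m}H$ is smooth on $[0,\infty)$ under the standing convention that $H$ is the radial factor of a smooth spherical-harmonic-decomposed function on $\mathbb R^d$. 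Writing $u = (\phi, \phi')^T$ and $\rho u'(\rho) = A(\rho)u(\rho) + f(\rho)$ yields $A(0) = \operatorname{diag}(0,-(2m+d-1))$, whose spectrum $\{0,-(2m+d-1)\}$ is disjoint from $\{1, 2, \dots\}$, so \emph{Proposition \ref{prop: Singular ODE}} produces a one-parameter family (indexed by $\phi(0)$) of local $C^1$ solutions, extended to $[0, 1)$ by standard linear ODE theory. Near $\rho = 1$ I would apply \emph{Proposition \ref{prop: Singular ODE}} directly to $(\Psi, \Psi')$, paralleling \emph{Lemma \ref{lem: psi_1}}: after $t = \rho - 1$ and dividing by $-(1+\rho)$, the matrix $A(0)$ is lower-triangular with spectrum $\{0, s_c - \tfrac{3}{2} - R\}$, which for $R$ large is disjoint from $\{1, 2, \dots\}$, producing a one-parameter family of $C^1$ solutions crossing $\rho = 1$ (the complementary branch $\sim(\rho - 1)^{s_c - 1/2 - R}$ being too singular to be $C^1$). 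Standard theory extends these to $(1, \infty)$.

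The main obstacle is the matching at $\rho = 1$: one free parameter from each side, with two continuity conditions (for $\Psi$ and $\Psi'$). Unique solvability for $R$ large follows from the absence of a nontrivial $C^1$ solution to the homogeneous problem: the operator $\mathcal L - \rho^{-2}\lambda_m - (\Lambda+R)(\Lambda+R+1) + pu_n^{p-1}$ is a compact perturbation of $-(\Lambda+R)(\Lambda+R+1)$, whose dominant $(\alpha+R)(\alpha+R+1)$-term rules out a zero eigenvalue and forces the $2 \times 2$ matching determinant to be nonzero. For the second claim, when $H(\rho)Y^{(l,m)} \in C_c^\infty(\mathbb R^d)$ one has $\rho^{-m}H$ smooth with compact support, and iterating \emph{Proposition \ref{prop: Singular ODE}} with increasing regularity index at both singular points (spectral conditions still hold for $R$ large) yields $\phi \in C^\infty([0,\infty))$. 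Since the coefficients and forcing of the $\phi$-equation are even functions of $\rho$, the solution splits into independent even/odd sectors and only the even sector contains regular solutions; thus $\phi(\rho) = G(\rho^2)$ for smooth $G$, whence $\Psi(\rho)Y^{(l,m)} = \rho^m G(\rho^2)Y^{(l,m)}$ is smooth on $\mathbb R^d$ by the standard characterization of smooth spherically-decomposed functions.
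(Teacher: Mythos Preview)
Your local analysis at the two singular points via Proposition~\ref{prop: Singular ODE} follows the paper's strategy, though with a different substitution at the origin: you set $\Psi=\rho^m\phi$ and obtain $\sigma(A(0))=\{0,-(2m+d-1)\}$, while the paper sets $\Psi_1=\rho^{m+d-2}\Psi$ and obtains $\sigma(A(0))=\{0,2m+d-3\}$, forcing it to take $l=2m+d-2$ in Proposition~\ref{prop: Singular ODE}. Your choice is cleaner but produces the source $\rho^{-m}H$, which is singular unless $H$ vanishes to order $m$ at the origin; thus you are proving a slightly weaker statement than the first sentence of the lemma as written (where only $H\in C^\infty([0,\infty))$ is assumed), although one that suffices for the application. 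Your evenness argument for smoothness of $\Psi(\rho)Y^{(l,m)}$ on $\mathbb R^d$ is a pleasant alternative to the paper's derivative-counting.

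The genuine gap is the uniqueness step. You assert that the operator is ``a compact perturbation of $-(\Lambda+R)(\Lambda+R+1)$, whose dominant $(\alpha+R)(\alpha+R+1)$-term rules out a zero eigenvalue.'' This is not a proof: in no natural space is $\mathcal L-\rho^{-2}\lambda_m$ compact relative to $(\Lambda+R)(\Lambda+R+1)$ (it is a genuine second-order elliptic term), and a large zero-order coefficient does not by itself preclude a nontrivial kernel on an interval with two singular endpoints. The paper's argument here is the heart of the lemma and is quite different: suppose a nontrivial $C^1$ homogeneous solution $\Psi$ exists on $[0,1]$; extend it uniquely past $\rho=1$ and run the exterior fixed-point argument of Lemma~\ref{lem: Global Existence} to obtain $\rho^{-\alpha-R}$ decay, so that $X=(\Psi,-(\Lambda+R)\Psi)Y^{(l,m)}\in\mathcal D_R$. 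Then $(\mathcal M-R)X=0$ gives $\langle\mathcal MX,X\rangle=R\langle X,X\rangle$. But the dissipativity inequality of \textbf{Step~1} in Proposition~\ref{prop: Maximal Dissipativity} (proved directly on $\mathcal D_R$, without presupposing this lemma) yields $\langle\mathcal MX,X\rangle\le C\langle X,X\rangle$ with $C$ independent of $R$ --- a contradiction for $R>C$. This feedback from the global energy estimate, uniform in $m$, is what your argument is missing; without it, the matching determinant could in principle vanish along a sequence $R_n\to\infty$.
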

\begin{proof}
\textbf{Step 1} (Solutions at $\rho=0$): Set $(\Psi_1,\Psi_2)=(\rho^{m+1}\Psi, \partial_\rho(\rho^{m+1}\Psi))$. Writing \eqref{eq: Radial part of Psi} in the form required in Proposition \ref{prop: Singular ODE}, 
\begin{equation}
\label{eq: Inner Maximality}
\begin{cases}
\rho\,\partial_\rho\Psi_1=\rho\Psi_2\\
\rho\,\partial_\rho\Psi_2=\frac{\rho}{1-\rho^2}\left[\xi-pu_n^{p-1}\right]\Psi_1+\frac{\rho}{1-\rho^2}\left[\frac{2m}{\rho}+\eta\rho\right]\Psi_2+\frac{\rho^{m+2}}{1-\rho^2}H
\end{cases}
\end{equation}
where
$$
\xi=(m-\alpha-R+1)(m-\alpha-R),\quad \eta=-2(m-\alpha-R).
$$
Hence,
$$
\rho\,\partial_\rho\begin{pmatrix}
\Psi_1\\
\Psi_2
\end{pmatrix}
= A(\rho)\begin{pmatrix}
\Psi_1\\
\Psi_2
\end{pmatrix}
+\frac{\rho^{m+2}}{1-\rho^2}\begin{pmatrix}
0\\
H
\end{pmatrix}
$$
where $A$ is smooth in $[0,1)$,
$$
A(0)=\begin{pmatrix}
0 & 0\\
0 & 2m\\
\end{pmatrix}
$$
with $\sigma(A(0))=\{0,2m\}$. Thus, by Proposition \ref{prop: Singular ODE} with $l=2m+1$, we infer for all $a,\,b\in \mathbb R$, there exists a unique smooth solution to the homogeneous problem for \eqref{eq: Inner Maximality} such that
$$
(\Psi_1,\Psi_1',\cdots,\Psi_1^{(2m)},\Psi_1^{(2m+1)})(0)=(a,0,\cdots,0,b).
$$
Since $H(\rho)Y^{(l,m)}$ is smooth radial, $H=\mathcal O_{\rho\rightarrow 0}(\rho^m)$ so from Proposition \ref{prop: Singular ODE} we can write the solution $\Psi_{a,b}$ to \eqref{eq: Inner Maximality} with the boundary condition
$$
(\Psi_{a,b},\Psi_{a,b}',\cdots,\Psi_{a,b}^{(2m)},\Psi_{a,b}^{(2m+1)})(0)=(a,0,\cdots,0,b)
$$
as
$$
\Psi_{a,b}=\Psi_0+a\psi_1+b\psi_2,\quad \begin{cases}
\psi_1(\rho)\propto 1+\mathcal O_{\rho\rightarrow 0}(\rho^{2m+2})\\
\psi_2(\rho)\propto \rho^{2m+1}+\mathcal O_{\rho\rightarrow 0}(\rho^{2m+2})\
\end{cases}
$$
where $\psi_1$, $\psi_2$ are the linearly independent solutions to the homogenous problem for \eqref{eq: Inner Maximality} in $[0,1)$ with appropriate initial values.\\\\
\textbf{Step 2} (Solutions at $\rho=1$): For $(\tilde \Psi_1,\tilde \Psi_2)=(\Psi,\partial_\rho\Psi)$, we write \eqref{eq: Radial part of Psi} as
$$
\begin{cases}
(\rho-1)\partial_\rho \tilde \Psi_1=(\rho-1)\tilde \Psi_2\\
(\rho-1)\partial_\rho \tilde \Psi_2=\frac{-(\alpha+R)(\alpha+R+1)+p u_n^{p-1}-\frac{\lambda_m}{\rho^2}}{1+\rho}\tilde \Psi_1+\frac{\frac{2}{\rho}-2(\alpha+R+1)\rho}{1+\rho}\tilde \Psi_2-\frac{H}{1+\rho}.
\end{cases}
$$
Hence,
$$
(\rho-1)\,\partial_\rho\begin{pmatrix}
\tilde\Psi_1\\
\tilde\Psi_2
\end{pmatrix}
= B(\rho)\begin{pmatrix}
\tilde\Psi_1\\
\tilde\Psi_2
\end{pmatrix}
+\frac{1}{\rho+1}\begin{pmatrix}
0\\
H
\end{pmatrix}
$$
where $B$ is smooth in $(0,\infty)$,
$$
B(1)=\frac{1}{2}\begin{pmatrix}
0 & 0\\
-(\alpha+R)(\alpha+R+1)-\lambda_m+pu_n^{p-1}(1) & 2s_c-2R-3\\
\end{pmatrix}
$$
with $\sigma(B(1))=\{s_c-R-\frac{3}{2},0\}$. Thus, by Proposition \ref{prop: Singular ODE}, for all $b\in\mathbb R$, there exists a unique smooth solution $\tilde \Psi_b\in C^\infty ((0,\infty))$ to \eqref{eq: Radial part of Psi} with 
$$
(\tilde\Psi_c(1), \tilde\Psi_c'(1))=\bigg(2c,-\bigg[\alpha+R+1+\frac{pu_n^{p-1}(1)-\lambda_m}{-s_c+R+\frac{3}{2}}\bigg]c+\frac{H(1)}{-s_c+R+\frac{3}{2}}\bigg).
$$
We can write
$$
\tilde \Psi_c = \tilde \Psi_0+c\,\tilde\psi,\quad (\tilde\psi(1),\tilde\psi'(1))=\bigg(2,-(\alpha+R+1)-\frac{pu_n^{p-1}(1)-\lambda_m}{-s_c+R+\frac{3}{2}}\bigg)
$$
where $\tilde \psi$ is the unique solution to the homogeneous problem for \eqref{eq: Radial part of Psi} in $(0,\infty)$ with the given initial values.\\\\
\textbf{Step 3} (Matching): Next, we claim that for $R$ sufficiently large and for all $m\ge0$, the homogeneous problem for \eqref{eq: Radial part of Psi} with $H=0$ has a unique $C^1$ solution $\Psi\equiv0$ on $[0,1]$. Suppose otherwise i.e. there is $R$ arbitrarily large and $m\ge0$ such that there exists $\Psi_{l,m}\not\equiv 0$ smooth in $[0,1]$ such that \eqref{eq: Radial part of Psi} holds with $H=0$ and $\Psi=\Psi_{l,m}(\rho)Y^{(l,m)}$ is smooth at the origin. Extend uniquely the homogeneous solution $\Psi_{l,m}$ to $[1,\infty)$. Then, using the fixed point argument as in the proof of Lemma \ref{lem: Global Existence} we infer
$$
\sum_{j=0}^{k+3}\sup_{\rho\ge1}\rho^{\alpha+R+j}|\partial_
\rho^j\Psi_{l,m}|<\infty
$$
and therefore, $(\Psi,-(\Lambda+R)\Psi)\in \mathcal D_R$ where we recall the definition \eqref{eq: D_R} of $\mathcal D_R$ and 
$$
\langle \mathcal MX,X\rangle=R\langle X,X\rangle.
$$
By dissipativity of $\tilde{\mathcal M}$ for $X\in \mathcal D_R$ proved in \textbf{Step 1} of the proof of Proposition \ref{prop: Maximal Dissipativity}, we infer for all $X\in \mathcal D_R$
$$
\langle \mathcal M X,X\rangle\le C \langle X,X\rangle
$$
for some $C$ independent of $R$ and this is a contradiction so we have our claim. This yields the uniqueness result.\\\\
Choose $R$ sufficiently large so the claim holds. Since $\{\rho^{-m-1}\psi_1,\rho^{-m-1}\psi_2\}$ is a basis of solutions to the homogeneous problem in $(0,1)$, there exists $A,$ $B\in\mathbb R$ such that
$$
\tilde\psi=\rho^{-m-1}(A\psi_1+B\psi_2)
$$
in $(0,1)$. If $A=0$, then $\tilde \psi\in C^\infty([0,1])$ contradicting the claim above. Since $\{\rho^{-m-1}\psi_1,\rho^{-m-1}\psi_2\}$ is a basis of solutions to the homogeneous problem in $(0,1)$, there exists $a,$ $b\in\mathbb R$ such that 
$$
\rho^{-m-1}\Psi_{a,b}=\tilde \Psi_0
$$
Then, 
$$
\Psi=\tilde \Psi_0-\frac{a}{A}\tilde\psi=\rho^{-m-1}\bigg(\Psi_{a,b}-a\psi_1-\frac{aB}{A}\psi_2\bigg)
$$
is smooth at $\rho=0$ by the first equality and is smooth at $\rho=1$ by the second equality. Thus, we have the existence and uniqueness of $C^1([0,\infty))$ solution. Furthermore, if $H(\rho)Y^{(l,m)}$ is smooth i.e. $H=\mathcal O_{\rho\rightarrow 0}(\rho^m)$ and $H^{(m+2k+1)}(0)=0$ for $k\in\mathbb N_{\ge 0}$, then it follows that $\Psi^{(m+2k+1)}(0)=0$ for $k\in\mathbb N_{\ge 0}$. Thus, $\Psi(\rho)Y^{(l,m)}$ is smooth.
\end{proof}

\begin{lemma}
\label{lem: Global Existence}
For $H$ such that $H(\rho)Y^{(l,m)}\in C_c^\infty (\mathbb R^3)$, let $\Psi$ be the unique $C^1$ solution to \eqref{eq: Radial part of Psi} found in \emph{Lemma \ref{lem: Local Existence}}. Then for $R$ sufficiently large, $\Psi(\rho)Y^{(l,m)}\in H^{k+1}(\mathbb R^3)$.
\end{lemma}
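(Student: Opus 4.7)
The plan is to combine the smoothness of $\Psi$ obtained in Lemma \ref{lem: Local Existence} with sharp polynomial decay at infinity coming from an exterior ODE analysis driven by the large parameter $R$. Since $H\in C_c^\infty(\mathbb R^d)$, I fix $\rho_H$ with $\supp H\subset[0,\rho_H]$. For $\rho\ge\rho_H$, the function $\Psi$ solves the \emph{homogeneous} equation, so the problem reduces to controlling the tail of a smooth solution of a linear second-order ODE with large parameter $R$.

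First I would expand $(\Lambda+R+1)(\Lambda+R)=\rho^2\partial_\rho^2+2(\alpha+R+1)\rho\partial_\rho+(\alpha+R)(\alpha+R+1)$ on radial functions, rewriting \eqref{eq: Radial part of Psi} as
\begin{equation*}
(1-\rho^2)\Psi''+\Big[\tfrac{d-1}{\rho}-2(\alpha+R+1)\rho\Big]\Psi'-\Big[(\alpha+R)(\alpha+R+1)+\tfrac{\lambda_m}{\rho^2}-pu_n^{p-1}\Big]\Psi=H.
\end{equation*}
The leading part at infinity is the Euler operator $-\big[\rho^2\partial_\rho^2+2(\alpha+R+1)\rho\partial_\rho+(\alpha+R)(\alpha+R+1)\big]$. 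Substituting $\Psi=\rho^{-\mu}$ gives the indicial polynomial $\mu^2-(2\alpha+2R+1)\mu+(\alpha+R)(\alpha+R+1)=0$ with roots $\mu_+=\alpha+R$ and $\mu_-=\alpha+R+1$, both of which grow with $R$; formally, every homogeneous solution decays at infinity at least like $\rho^{-(\alpha+R)}$.

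Next I would upgrade this to a rigorous construction, on $[\rho_H,\infty)$, of two fundamental solutions $\psi_\pm$ of the homogeneous equation with $\psi_\pm=\rho^{-\mu_\pm}(1+o(1))$ and pointwise bounds $|\partial_\rho^j\psi_\pm|\lesssim\rho^{-\mu_\pm-j}$ for $0\le j\le 2k+3$. The argument is a fixed point in a weighted Banach space in the spirit of Proposition \ref{prop: Exterior Resolvent}: after substituting $\psi=\rho^{-\mu_\pm}v$ one inverts the Euler operator explicitly (which produces an $R^{-1}$ gain per integration) and absorbs the perturbation terms $\partial_\rho^2$, $\tfrac{d-1}{\rho}\partial_\rho$, $\rho^{-2}\lambda_m$, $pu_n^{p-1}$, which, using Lemma \ref{lem: Bounds on u_n} for the $u_n^{p-1}$ term, all carry a relative $\rho^{-2}$ smallness compared to the Euler part. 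For $R$ sufficiently large the map is a contraction on a space controlling $2k+3$ weighted derivatives of $v$.

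Once $\psi_\pm$ are in hand, the $C^1$ solution from Lemma \ref{lem: Local Existence} restricted to $[\rho_H,\infty)$ lies in the two-dimensional kernel spanned by $\psi_\pm$, hence $|\partial_\rho^j\Psi|\lesssim\rho^{-(\alpha+R)-j}$ for $0\le j\le 2k+3$. Treating $\Psi$ as a radial function on $\mathbb R^d$ one has $|\nabla^j\Psi|\lesssim\sum_{i\le j}\rho^{i-j}|\partial_\rho^i\Psi|\lesssim\rho^{-(\alpha+R)-j}$, so each integral $\int_0^\infty|\nabla^j\Psi|^2\rho^{d-1}d\rho$ splits into an interior piece on $[0,\rho_H]$, finite by smoothness, and an exterior piece bounded by $\int_{\rho_H}^\infty\rho^{-2(\alpha+R)-2j+d-1}d\rho$, which converges as soon as $R>\tfrac{d}{2}-\alpha$. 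Taking $R$ large enough therefore yields $\Psi\in H_{\rad}^{2k+1}(\mathbb R^d)$. The main obstacle I foresee is the exterior fixed point at the required level of derivative regularity: one must simultaneously control $2k+3$ weighted derivatives of $v$ so that the ODE can bootstrap the basic decay of $\Psi$ into decay of $\partial_\rho^j\Psi$, and the relative smallness of the perturbation after inversion of the Euler operator forces $R$ to be taken large in a manner that also accommodates $\alpha$ and $k$; this is exactly the ``$R$ sufficiently large'' threshold appearing in the statement.
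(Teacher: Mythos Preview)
Your overall strategy is correct and close in spirit to the paper's: establish that the $C^1$ solution from Lemma~\ref{lem: Local Existence} decays like $\rho^{-(\alpha+R)}$ together with all needed derivatives, then conclude $H^{2k+1}_{\rad}$ membership by taking $R$ large. However, your mechanism for the exterior contraction contains an error. After the substitution $\psi=\rho^{-\mu_\pm}v$ the Euler operator becomes an $R$-independent operator on $v$ (for $\mu_+=\alpha+R$ one gets $-\rho^2 v''-2\rho v'$), so inverting it does \emph{not} produce an $R^{-1}$ gain. Worse, the perturbation term $\Psi''$ coming from the ``$1$'' in $(1-\rho^2)$ contributes $\mu_\pm(\mu_\pm+1)\rho^{-\mu_\pm-2}v\sim R^2\rho^{-\mu_\pm-2}$, so relative to the transformed Euler operator the perturbation is of size $R^2\rho^{-2}$, not $\rho^{-2}$ uniformly in $R$. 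Hence the fixed point is a contraction only on $[\rho_0,\infty)$ with $\rho_0$ large (in fact $\rho_0\gg R$ in your decomposition), not by sending $R\to\infty$ at fixed $\rho_H$. This is easily repaired: construct $\psi_\pm$ on $[\rho_0,\infty)$ for $\rho_0$ large, then propagate back to $[\rho_H,\rho_0]$ using regularity of the ODE there.

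The paper avoids this $R^2$ loss by choosing a different main operator. Instead of isolating only the Euler part, it keeps the full operator $\mathcal L_R$ in \eqref{eq: Linear Wave} \emph{without} the potential $pu_n^{p-1}$ as the principal part, for which explicit hypergeometric fundamental solutions $\varphi_1,\varphi_2$ are available (see \eqref{eq: Fundamental Solutions for Linear Wave}). The only perturbation is then $pu_n^{p-1}\Psi$, which by Lemma~\ref{lem: Bounds on u_n} carries a genuine $\rho^{-2}$ smallness with constants independent of $R$; the resolvent $\mathcal T_R$ built from $\varphi_1,\varphi_2$ satisfies $\|\mathcal T_R\|\lesssim\rho_0^{-2}$, and the contraction follows for $\rho_0$ large uniformly in $R$. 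The largeness of $R$ enters only at the last step, via the threshold $R>s_c$ needed for the embedding $\bar X_{\rho_0}\hookrightarrow H^{2k+1}_{\rad}(\{\rho\ge\rho_0\})$. So the two routes differ mainly in what is treated as perturbation: yours is more elementary (no special functions) but forces an $R$-dependent $\rho_0$; the paper's use of hypergeometric solutions cleanly decouples the contraction from $R$.
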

\begin{proof}
Using the fixed point argument, we prove the existence of $C^{k+1}$ solution $\Psi$ to \eqref{eq: Radial part of Psi} in $\{\rho\ge\rho_0\}$ for $\rho_0$ sufficiently large with sufficiently rapid decay as $\rho\rightarrow\infty$ so that $\Psi\in H_{\rad}^{k+1}(\{\rho\ge\rho_0\})$. Then by uniqueness of solution, we argue that this solution is indeed what we found in Lemma \ref{lem: Local Existence}.\\\\
Consider the homogeneous problem for \eqref{eq: Radial part of Psi} without the $pu_n^{p-1}$ potential term:
\begin{equation}
\label{eq: Linear Wave}
\underbrace{\Big\{(1-\rho^2)\partial_\rho^2+[2\rho^{-1}-2(\alpha+R+1)\rho]\partial_\rho -\lambda_m\rho^{-2}-(\alpha+R)(\alpha+R+1)\Big\}}_{:=\mathcal L_R}\varphi=0
\end{equation}
in $[1,\infty)$. Computation similar to Lemma \ref{lem: Fundamental Solutions} yields a pair of linearly independent solutions
\begin{equation}
\label{eq: Fundamental Solutions for Linear Wave}
\begin{aligned}
\varphi_1&=\rho^{-\alpha-R-1}\,_2F_1\bigg(\frac{\alpha+R+m+1}{2},\frac{\alpha+R-m}{2},\frac{3}{2},\rho^{-2}\bigg)\\
\varphi_2&=\rho^{-\alpha-R}\,_2F_1\bigg(\frac{\alpha+R+m}{2},\frac{\alpha+R-m-1}{2},\frac{1}{2},\rho^{-2}\bigg)
\end{aligned}
\end{equation}
with the Wronskian
$$
W:=\varphi_1'\varphi_2-\varphi_2'\varphi_1\propto\rho^{-2}|1-\rho^2|^{s_c-R-\frac{3}{2}}.
$$
Define the spaces
$$
\begin{aligned}
\bar X_{\rho_0}&=\bigg\{w\in C^{k+1}((\rho_0,\infty))\,\bigg|\,\Vert w\Vert_{\bar X_{\rho_0}}:=\sum_{j=0}^{k+1}\,\sup_{\rho\ge\rho_0}\rho^{\alpha+R+j}|\partial_\rho^j w|\bigg\},\\
\bar Y_{\rho_0}&=\bigg\{w\in C^{k+1}((\rho_0,\infty))\,\bigg|\,\Vert w\Vert_{\bar Y_{\rho_0}}:=\sum_{j=0}^{k+1}\,\sup_{\rho\ge\rho_0}\rho^{\alpha+R+j+2}|\partial_\rho^jw|\bigg\}.
\end{aligned}
$$
Claim that for $\rho_0>1$, the resolvent map $\mathcal T_R: \bar Y_{\rho_0}\rightarrow \bar X_{\rho_0}$ given by
$$
\mathcal T_R(f)=\varphi_1 \int_{\rho_0}^\rho \frac{f\varphi_2}{(1-r^2)W}\,dr-\varphi_2\int_{\rho_0}^\rho \frac{f\varphi_1}{(1-r^2)W}\,dr
$$
is well-defined and bounded with $\mathcal L_R \circ \mathcal T_R=\id_{\bar Y_{\rho_0}}$. Note that 
$$
\begin{aligned}
\partial_\rho^j\mathcal T_R(f)&=\varphi_1^{(j)} \int_{\rho_0}^\rho \frac{f\varphi_2}{(1-r^2)W}\,dr-\varphi_2^{(j)} \int_{\rho_0}^\rho \frac{f\varphi_1}{(1-r^2)W}\,dr\\
&+\sum_{i=0}^{j-2}\partial_\rho^i\bigg[\frac{f(\varphi_1^{(j-i-1)}\varphi_2-\varphi_2^{(j-i-1)}\varphi_1)}{(1-\rho^2)W}\bigg].
\end{aligned}
$$
In view of \eqref{eq: Fundamental Solutions for Linear Wave} and the asymptotic expansion of the fundamental solutions, we infer
$$
\partial_\rho^l\bigg[\frac{\varphi_1^{(j-i-1)}\varphi_2-\varphi_2^{(j-i-1)}\varphi_1}{(1-\rho^2)W}\bigg]=\mathcal O_{\rho\rightarrow\infty}(\rho^{i-j-l}).
$$
Then for all $\rho\ge \rho_0$ and $0\le j\le k+1$,
$$
\begin{aligned}
\rho^{\alpha+R+j}|\partial_\rho^j\mathcal T_R(f)|&\lesssim\bigg( \rho^{-1}\int_{\rho_0}^\rho\rho^{-2}\,d\rho\bigg)\sup_{r\ge\rho_0}r^{\alpha+R+2}|f|+\bigg(\int_{\rho_0}^\rho\rho^{-3}\,d\rho\bigg)\sup_{r\ge\rho_0}r^{\alpha+R+2}|f|\\
&+\sum_{i=0}^{j-2}\rho^{i-j}\bigg(\rho^{j-i-2}\sup_{r\ge\rho_0}r^{\alpha+R+i+2}|\partial_\rho^i f|\bigg)\lesssim \rho_0^{-2}\Vert f\Vert_{\bar Y_{\rho_0}}.
\end{aligned}
$$
Thus, $\mathcal T_R$ is a bounded map with operator norm $\Vert \mathcal T_R\Vert \lesssim \rho_0^{-2}$ as claimed. Now we solve the fixed point problem:
\begin{equation}
\label{eq: Fixed Point for Psi}
\Psi =\underbrace{c_1\varphi_1+c_2\varphi_2+ \mathcal T_R[H-u_n^{p-1}\Psi]}_{:=G_R(\Psi)}
\end{equation}
for $c_1$, $c_2$ such that the $\Psi(\rho_0)$, $\Psi'(\rho_0)$ agree with the corresponding values of the unique solution in Lemma \ref{lem: Local Existence}. Note that $\varphi_1$, $\varphi_2\in \bar X_{\rho_0}$, $H\in C_c^\infty([0,\infty))$. By Lemma \ref{lem: Bounds on u_n}, $\partial_\rho^j(u_n^{p-1})=\mathcal O(\rho^{-m-2})$ as $\rho\rightarrow \infty$ so we infer 
$$
\Vert u_n^{p-1}\Psi\Vert_{\bar Y_{\rho_0}}\lesssim \Vert\Psi\Vert_{\bar X_{\rho_0}}
$$
and hence, $H-u_n^{p-1}\Psi\in \bar Y_{\rho_0}$ so indeed $G_R:\bar X_{\rho_0}\rightarrow \bar X_{\rho_0}$. For $\rho_0$ sufficiently large, $G_R$ is a contraction map since for all $\Psi_1,\Psi_2\in\bar X_{\rho_0}$,
$$
\Vert G_R(\Psi_1)-G_R(\Psi_2)\Vert_{\bar X_{\rho_0}}\lesssim \Vert\mathcal T_R\Vert\, \Vert u_n^{p-1}(\Psi_1-\Psi_2)\Vert_{\bar Y_{\rho_0}}\lesssim \rho_0^{-2}\Vert \Psi_1-\Psi_2\Vert_{\bar X_{\rho_0}}.
$$
Thus, it follows from the Banach fixed point theorem that there exists a unique $\Psi\in \bar X_{\rho_0}$ such that \eqref{eq: Fixed Point for Psi} holds. Taking $R>s_c$, $\bar X_{\rho_0}$ continuously embeds in $H_{\rad}^{k+1}(\{\rho\ge\rho_0\})$ so $\Psi\in H_{\rad}^{k+1}(\{\rho\ge\rho_0\})$. Also, by uniqueness of solution to an ODE at ordinary point, this is indeed the solution we found in   Lemma \ref{lem: Local Existence}.
\end{proof}

\section{Behaviour of the Sobolev norm}
In this section, we prove the asymptotic behaviours \eqref{eq: Subcritical Sobolev}, \eqref{eq: Critical Sobolev} and \eqref{eq: Supercritical Sobolev} of the Sobolev norms of the blow up solutions. In this section we denote by $\tau$ the self-similar time in order to distinguish from the Sobolev exponent $s$.

\begin{proof}[proof of \eqref{eq: Subcritical Sobolev}, \eqref{eq: Critical Sobolev}, \eqref{eq: Supercritical Sobolev}]
Suppose that $(\Phi,\Phi_t)$ is a blow up solution as in the statement of Theorem \ref{theo: Result 2}. Then, the bootstrap bounds in Proposition \ref{prop: Bootstrap} are satisfied in the region $\tau\in[s_0,\infty)$ in the self-similar time. In particular, from \eqref{eq: Improved Bound 1}, we have that
\begin{equation}
\label{eq: Energy Bound}
\int_{|y|<e^\tau} \langle\rho\rangle^{2j-2s_c}\left(|\nabla^j\Psi|^2+\mathbbm{1}_{j\ge1}|\nabla^{j-1}\Omega|^2\right)\,dy< d_0,\quad 0\le j\le 4,
\end{equation}
and from \eqref{eq: Bootstrap Bound 4},
\begin{equation}
\label{eq: Accretivity Norm}
\int_{\mathbb R^3} \left(|\nabla^4\Psi|^2+|\nabla^3\Omega|^2\right)dy<e^{-\frac{\delta_0}{2} \tau}.
\end{equation}
Recall the definition of dampened profile $u_n^D$ and perturbation $\Psi^D$ from Section \ref{sec: Bootstrap}. From \eqref{eq: Improved Bound 1} with $j=0$, we infer
$$
\Vert \Phi\Vert_{L^2(|x|>1)}^2=\left\Vert \frac{1}{(T-t)^\alpha}\tilde\Psi\left(\frac{r}{T-t}\right)\right\Vert_{L^2(|x|>1)}^2\lesssim \int_{|y|\ge e^\tau} \rho^{-2s_c}\xi(e^{-\tau}\rho)^{2n_p+1}|\tilde\Psi|^2\,dy< d_0
$$
where we have used that $\xi(r)\gtrsim r$ for $r\ge1$ and that $s_c<n_p$. Similarly, set $j=2$ in \eqref{eq: Improved Bound 1},
$$
\Vert \Phi\Vert_{\dot H^2(|x|>1)}^2=\left\Vert \frac{1}{(T-t)^\alpha}\tilde\Psi\left(\frac{r}{T-t}\right)\right\Vert_{\dot H^2(|x|>1)}^2\lesssim \int_{|y|\ge e^\tau} \rho^{-2(s_c-2)}\xi(e^{-\tau}\rho)^{2n_p+1}|\Delta\tilde\Psi|^2\,dy< d_0.
$$
We interpolate the above two bounds and infer
\begin{equation}
\label{eq: Exterior Bound}
\Vert\Phi\Vert_{\dot H^s(|x|>1)}^2\lesssim d_0,\quad 0\le s\le 2
\end{equation}
\\
{\bf Step 1} ($H^{s_c}$ Bound): In view of the Gagliardo-Nierenberg inequality (see \cite{DDS}), we infer the $H^{s_c}$ bound on $\Psi^D$:
$$
\begin{aligned}
&\left\Vert \frac{1}{(T-t)^\alpha}\nabla_r^{s_c}\Psi^D\left(\frac{r}{T-t}\right)\right\Vert_{L^2(\mathbb R^3)}^2=\int_{\mathbb R^3}|\nabla^{s_c}\Psi^D|^2dy\\
\lesssim& \left(\int_{\mathbb R^3} \langle\rho\rangle^{2(1-s_c)}|\nabla\Psi^D|^2\,dy \right)^\theta \left(\int_{\mathbb R^3} \langle\rho\rangle^{2(2-s_c)}|\Delta\Psi^D|^2\,dy\right)^{1-\theta}
\end{aligned}
$$
where
$$
s_c=\theta+2(1-\theta), \quad \theta\in(0,1).
$$
Thus, from \eqref{eq: Improved Bound 1}, we infer
\begin{equation}
\label{eq: Critical Norm Psi}
\left\Vert \frac{1}{(T-t)^\alpha}\nabla_r^{s_c}\Psi^D\left(\frac{r}{T-t}\right)\right\Vert_{L^2(\mathbb R^3)}^2=\int_{\mathbb R^3}|\nabla^{s_c}\Psi^D|^2dy\lesssim d_0.
\end{equation}
Also, note that for $s\le s_c$,
\begin{equation}
\label{eq: Subcritical Norm Profile}
\begin{aligned}
&\left\Vert  \frac{1}{(T-t)^\alpha}\nabla_r^s u_n\left(\frac{r}{T-t}\right)\right\Vert_{L^2(|x|<1)}^2=e^{-2(s_c-s)\tau}\int_{|y|\le e^\tau}|\nabla^su_n|^2\,dy\\
\sim\, &c_{n,s} e^{-2(s_c-s)\tau}\int_1^{e^\tau}\rho^{2(s_c-s)-1}d\rho\sim\ c_{n,s} \begin{cases} 1 & s<s_c,\\ \tau & s=s_c. \end{cases}
\end{aligned}
\end{equation}
Above inequalities, together with \eqref{eq: Exterior Bound} with $s=s_c$ we infer,
$$
\Vert \Phi\Vert_{\dot H^{s_c}}^2=c_n(1+o_{t\rightarrow T}(1))|\log(T-t)|, 
$$
Similarly for $\Phi_t$. Hence, we infer \eqref{eq: Critical Sobolev}.\\\\
{\bf Step 2} (Subcritical Bound): Set $j=0$ in \eqref{eq: Energy Bound}, we have the $L^2$ bound on $\Psi$:
$$
\begin{aligned}
\left\Vert \frac{1}{(T-t)^\alpha}\Psi\left(\frac{r}{T-t}\right)\right\Vert_{L^2(|x|<1)}^2\le&\int_{|x|<1} e^{2s_c\tau}\langle e^\tau r\rangle^{-2s_c}\left|\frac{1}{(T-t)^\alpha}\Psi\left(\frac{r}{T-t}\right)\right|^2dx\\
=&\int_{|y|<e^\tau} \langle\rho\rangle^{-2s_c}|\Psi|^2\,dy< d_0.
\end{aligned}
$$
This, together with \eqref{eq: Exterior Bound} we infer
$$
\left\Vert \frac{1}{(T-t)^\alpha}\Psi^D\left(\frac{r}{T-t}\right)\right\Vert_{L^2(\mathbb R^3)}^2\lesssim d_0.
$$
Interpolate with the critical norm \eqref{eq: Critical Norm Psi} above, we have for $0\le s<s_c$,
$$
\left\Vert \frac{1}{(T-t)^\alpha}\nabla_r^s\Psi^D\left(\frac{r}{T-t}\right)\right\Vert_{L^2(\mathbb R^3)}^2\lesssim d_0.
$$
Adding with the norm of the dampened profile \eqref{eq: Subcritical Norm Profile}, we infer
$$
\limsup_{t\rightarrow T} \Vert\Phi\Vert_{\dot H^s}^2<\infty.
$$
Similarly for $\Phi_t$. Hence, we infer \eqref{eq: Subcritical Sobolev}.\\\\
{\bf Step 3} (Supercritical Bound): Since
$$
\int_{|y|\ge e^\tau} |\nabla^4(u_n-u_n^D)|^2dy\lesssim e^{-2(4-s_c)\tau},
$$
it follows from \eqref{eq: Accretivity Norm} that
$$
\int_{\mathbb R^3}|\nabla^4\Psi^D|^2dy\lesssim e^{-\frac{\delta_0}{2}\tau}+e^{-2(4-s_c)\tau}.
$$
We interpolate this with \eqref{eq: Critical Norm Psi} and infer for $s_c< s\le 2$
$$
\begin{aligned}
\Vert \Psi\Vert_{\dot H^s}^2\le&\int_{|y|\ge e^\tau} |\nabla^s(u_n-u_n^D)|^2dy+\int_{\mathbb R^3}|\nabla^s\Psi^D|^2dy\\
\lesssim&_s \,e^{-2(s-s_c)\tau}+ e^{-c_s \tau}\rightarrow 0.
\end{aligned}
$$
Similarly for $\Omega$. Hence, we infer \eqref{eq: Supercritical Sobolev}.
\end{proof}

\section{Lipschitz dependence of initial data}

Recall from Section \ref{sec: Bootstrap} the definition of the projection operator $P$ onto $V$ the subspace of unstable directions under semigroup action of maximally dissipative operator $\mathcal M-\mathcal P$. In the Proof \ref{proof: Result 2} and Corollary \ref{cor: Exponential Decay 2}, it is proved that for any small intitial perturbation in the stable direction:
$$
\Vert (I-P)X(s_0)\Vert_{\mathbb H}\le e^{-\frac{\delta}{2}s_0},
$$
there exists a choice of $PX(s_0)$ so that the solution is global in self-similar time with
$$
\Vert PX(s)\Vert_{\mathbb H}\le e^{-\frac{\delta}{2}(1+\frac{c}{2})s}\quad s\ge s_0.
$$
In this section, we prove that the choice of $PX(s_0)$ is unique and is Lipschitz dependent on $(I-P)X(s_0)$. In particular, we show that for any two global solutions $X$ and $\bar X$, if the initial difference in the unstable direction is too big compared to the initial differences in the stable direction, the unstable linear dynamics wins and expels the differences of unstable parameters away from $0$. Hence one of the two solutions cannot blow up according to our scenario, yielding a contradiction. In particular, we claim the following:

\begin{lemma}
\label{lem: Lipschitz}
Let us assume $X$ and $\bar X$ are two global solutions as in Proposition \ref{prop: Bootstrap} i.e. there holds the initial condition \eqref{eq: Choice of Initial Value 1}, and the bootstrap bounds \eqref{eq: Bootstrap Bound 2}, \eqref{eq: Bootstrap Bound 4} for $s\ge s_0$. Denote by 
$$
 X_s=(I-P)X,\quad X_u=PX
$$
the stable and unstable part of the perturbation and similarly $\bar X_u$, $\bar X_s$. Then, for $s_0\gg1$ sufficiently large,
\begin{equation}
\label{eq: Lipschitz}
\Vert \triangle X_u(s_0)\Vert_{\mathbb H}\le c_{s_0} \Vert \triangle X_s(s_0)\Vert_{\mathbb H}
\end{equation}
where $\triangle X_u=X_u-\bar X_u$,  $\triangle X_s=X_s-\bar X_s$.
\end{lemma}
\begin{proof}
{\bf Step 1} (Difference of nonlinear term): Recall \eqref{eq: Linearized System} and define $\triangle G=G-\bar G$. Then,
$$
\begin{aligned}
\triangle G_\Omega =&-|\Psi+u_n|^{p-1}(\Psi+u_n)+|\bar\Psi+u_n|^{p-1}(\bar\Psi+u_n)+pu_n^{p-1} \triangle \Psi\\
=&\ p\triangle \Psi \left(u_n^{p-1}-\int_0^1 |u_n+\bar \Psi+\tau \triangle\Psi|^{p-1}\,d\tau \right)
\end{aligned}
$$
We claim the following nonlinear bound: there exists $c>0$ such that
$$
\Vert \triangle G(s)\Vert_{\mathbb H}\lesssim e^{-\frac{c\delta}{2}s}\Vert \triangle X(s)\Vert_{\mathbb H}.
$$
This is an analogue of \eqref{eq: Nonlinear Bound 2} for the difference $\triangle  X$.\\\\
Let $\rho\ge1$. Note that for $m\le k=3< p-1$,
\begin{equation}
\label{eq: Difference Nonlinearity}
\int_0^1 |u_n+\bar \Psi+\tau \triangle\Psi|^{p-m-1}\,d\tau\lesssim \sup_{\tau\in [0,1]}|u_n+\bar\Psi+\tau \triangle \Psi|^{p-m-1}
\end{equation}
Thus, using \eqref{eq: Bootstrap Bound 2} and following the similar steps as in \eqref{eq: Nonlinearity Expansion} we infer
$$
\begin{aligned}
|\nabla^k \triangle G_\Omega|\lesssim& \sum_{j+l=k}|\nabla^j\triangle\Psi| \sum_{m=0}^l\rho^{(p-m-1)(-\alpha+\kappa)}\sum_{|\alpha|=l}\,\prod_{q=1}^m(|\nabla^{\alpha_q}(u_n+\bar\Psi)|+|\nabla^{\alpha_q}\triangle\Psi|)\\
\lesssim&\sum_{j+l=k}|\nabla^j\triangle\Psi|\sum_{m=0}^l\rho^{(p-m-1)(-\alpha+\kappa)}\rho^{m(-\alpha+\kappa)-l}\lesssim \sum_{i=0}^k \rho^{i-k-\frac{3}{2}}|\nabla^i\triangle\Psi|.
\end{aligned}
$$
where in the last inequality, we have used that $\kappa<\frac{1}{2(p+1)}$. Then for $R\ge1$, we infer
\begin{equation}
\label{eq: Difference Nonlinearity Bound}
\int_{|y|\ge R} |\nabla^3\triangle G_\Omega|^2\,dy\lesssim\sum_{i=0}^3\int_{|y|\ge R}\rho^{-2i-3}|\triangle\nabla^{3-i}\Psi|^2\,dy\lesssim R^{-1}\Vert  \triangle \Psi \Vert_{H_4}^2\le R^{-1} \Vert \triangle X\Vert_{\mathbb H}^2
\end{equation}
where we have used the Hardy's inequality. We now bound $\triangle G_\Omega$ in the region $\rho\le R$. We rewrite
$$
\triangle G_\Omega=-p(p-1)\triangle \Psi \int_0^1\int_0^1 (\bar\Psi+\tau\triangle \Psi)|u_n+\tau'(\bar \Psi+\tau \triangle\Psi)|^{p-3}(u_n+\tau'(\bar \Psi+\tau \triangle\Psi))\,d\tau d\tau'.
$$
Note that for $m\le 3<p-2$,
$$
\int_0^1 |u_n+\tau'(\bar \Psi+\tau \triangle\Psi)|^{p-m-2}\,d\tau'\lesssim \sup_{0\le\tau\le1}|u_n+\tau(\bar\Psi+\tau\triangle \Psi)|^{p-m-2}
$$
Thus, we infer from the assumption $k=3<p-1$ that
$$
\sum_{m=0}^3\left\Vert\int_0^1 |u_n+\tau'(\bar \Psi+\tau \triangle\Psi)|^{p-m-2}\,d\tau'\right\Vert_{L^\infty(\mathbb R^3)}\lesssim 1.
$$
Then following the similar steps as in \eqref{eq: Interior Nonlinear Bound} by exploiting the algebra structure of the Sobolev space $H_R^3$, we bound the nonlinear difference in the region $0\le \rho\le R$:
$$
\begin{aligned}
&\int_{|y|\le R} |\nabla^3 \triangle G_\Omega|^2\,dy\le \Vert \triangle G_\Omega\Vert_{H^3_R}^2\\
\lesssim& \,R^{2M_1}\Vert\triangle\Psi\Vert_{H^3_R}^2(\Vert\Psi\Vert_{H^3_R}^2+\Vert\bar\Psi\Vert_{H^3_R}^2) \sum_{m=0}^3(\Vert u_n\Vert_{H^3_R}^2+\Vert\Psi\Vert_{H^3_R}^2+\Vert\bar\Psi\Vert_{H^3_R}^2)^m\\
\lesssim &R^M\Vert\triangle X\Vert_{\mathbb H}^2(\Vert X\Vert_{\mathbb H}^2+\Vert\bar X\Vert_{\mathbb H}^2)\lesssim R^M e^{-\delta s}\Vert\triangle X\Vert_{\mathbb H}^2
\end{aligned}
$$
for some $M>0$. Note that the final inequality follows from \eqref{eq: Bootstrap Bound 4}. Set $R=e^{\frac{\delta s}{1+M}}$ and add \eqref{eq: Exterior Nonlinear Bound} with \eqref{eq: Interior Nonlinear Bound} so the claim \eqref{eq: Nonlinear Bound 2} follows by choosing $c<\frac{1}{1+M}$.\\\\
{\bf Step 2} (Bound on initial perturbation): Recall that in the decomposition
$$
\mathbb H=U\oplus V,
$$
we have for all $\lambda\in \sigma(\mathcal M-\mathcal P)|_V$, that $\Re(\lambda)\ge0$. Then, without loss of generality, restrict to an irreducible subspace so that for $\Re(\lambda)\ge0$, we write $A:=\mathcal M-\mathcal P$ as in \eqref{eq: Jordan Block}. Then, from Duhamel's formula, \eqref{eq: Linearized System} implies
$$
e^{(s_0-s)A}\triangle X_u(s)=\triangle X_u(s_0)+\int_{s_0}^\infty e^{(s_0-\tau)A} \triangle G_u(\tau)\,d\tau-\int_s^\infty e^{(s_0-\tau)A} \triangle G_u(\tau)\,d\tau
$$
where $G_u=P G(s)$ and $\triangle G_u= G_u-\bar G_u$. Also, from \eqref{eq: Difference Nonlinearity Bound}, we bound
$$
\begin{aligned}
&\left\Vert e^{(s_0-s)A}\triangle X_u(s)+\int_s^\infty e^{(s_0-\tau)A} \triangle G_u(\tau)\,d\tau\right\Vert_{\mathbb H}\\
\lesssim& (s-s_0)^{m_\lambda-1}e^{-\Re(\lambda)(s-s_0)} \Vert \triangle X_u(s)\Vert_{\mathbb H}+ \int_s^\infty (\tau-s_0)^{m_\lambda-1}e^{-\Re(\lambda)(\tau-s_0)}\Vert \triangle G_u\Vert_{\mathbb H}\,d\tau\rightarrow 0
\end{aligned}
$$
since we have exponential decay of $X$, $\bar X$ from \eqref{eq: Bootstrap Bound 4}  and of $G$, $\bar G$ from \eqref{eq: Nonlinear Bound 1}. Thus, for all $s\ge s_0$,
\begin{equation}
\label{eq: Initial Bound}
\Vert \triangle X_u(s)\Vert_{\mathbb H}=\left\Vert \int_s^\infty e^{(s-\tau)A} \triangle G_u(\tau)\,d\tau\right\Vert_{\mathbb H}\le \int_{s_0}^\infty \Vert \triangle G_u(\tau)\Vert_{\mathbb H}\,d\tau \le \int_{s_0}^\infty e^{-\frac{c\delta}{2}\tau}\Vert \triangle X(\tau)\Vert_{\mathbb H}\,d\tau.
\end{equation}
Now, consider the evolution in the stable subspace $U$ where $A$ is dissipative so Corollary \ref{cor: Exponential Decay 1} applies. Again, from Duhamel's formula,
$$
\triangle X_s(s)=e^{(s-s_0)A}\triangle X_s(s_0)+\int_{s_0}^s e^{(s-\tau)A} \triangle G_s(\tau)\,d\tau,
$$
so we bound for all $s\ge s_0$:
$$
\Vert \triangle X_s(s)\Vert_{\mathbb H}\le\Vert \triangle X_s(s_0)\Vert_{\mathbb H} +\int_{s_0}^s \Vert \triangle G_u(\tau)\Vert_{\mathbb H}\,d\tau \le \Vert \triangle X_s(s_0)\Vert_{\mathbb H}+\int_{s_0}^s e^{-\frac{c\delta}{2}\tau}\Vert \triangle X(\tau)\Vert_{\mathbb H}\,d\tau.
$$
Takinge supermum over $s$,
$$
\begin{aligned}
\Vert \triangle X_s\Vert_{\mathbb H, L_s^\infty} \le& \Vert \triangle X_s(s_0)\Vert_{\mathbb H}+(\Vert \triangle X_s\Vert_{\mathbb H, L_s^\infty}+\Vert \triangle X_u\Vert_{\mathbb H, L_s^\infty})\int_{s_0}^\infty e^{-\frac{c\delta}{2}\tau}d\tau\\
\lesssim &\Vert \triangle X_s(s_0)\Vert_{\mathbb H}+\Vert \triangle X_u\Vert_{\mathbb H, L_s^\infty}.
\end{aligned}
$$
where in the last inequality, we absorb the $\triangle X_s$ on the RHS by taking a large $s_0$. Thus, from \eqref{eq: Initial Bound},
$$
\begin{aligned}
\Vert \triangle X_u\Vert_{\mathbb H,L_s^\infty}&\le \int_{s_0}^\infty e^{-\frac{c\delta}{2}\tau}(\Vert \triangle X_s\Vert_{\mathbb H,L_s^\infty}+\Vert \triangle X_u\Vert_{\mathbb H, L_s^\infty})\,d\tau\\
&\lesssim e^{-\frac{c\delta}{2}s_0}(\Vert \triangle X_s(s_0)\Vert_{\mathbb H}+\Vert \triangle X_u\Vert_{\mathbb H, L_s^\infty})\lesssim \Vert \triangle X_s(s_0)\Vert_{\mathbb H}.
\end{aligned}
$$
Again absorb the $\triangle X_u$ term by taking a large $s_0$. Thus, we infer \eqref{eq: Lipschitz}.
\end{proof}


\bibliographystyle{acm}

\end{document}